\documentclass[reqno,11pt]{amsart}
 \oddsidemargin9mm
 \evensidemargin9mm 
 \textwidth14.7cm

\usepackage[colorlinks=true, pdfstartview=FitV, linkcolor=blue, 
            citecolor=blue, urlcolor=blue]{hyperref}

\usepackage{graphicx}
\usepackage[mathscr]{euscript} 
\usepackage{bm}

\usepackage{amssymb}
\usepackage{amsmath}
\usepackage{amsthm}
\usepackage{pgf}
\usepackage{color}
\usepackage{graphicx}
\usepackage{comment}
\usepackage{mathtools}
\usepackage{hyperref}
\usepackage{cases}
\usepackage{enumitem}

\newcommand{\FFF}{\color{black}}

\newcommand{\PPP}{\color{black}}

\newcommand{\RRR}{\color{red}}
\newcommand{\BBB}{\color{black}}
\newcommand{\EEE}{\color{black}}
\newcommand{\UUU}{\color{black}}
\newcommand{\MMM}{\color{black}}

\newcommand{\OOO}{\color{black}}

\newcommand{\SSS}{\color{black}}
\newcommand{\CCC}{\color{black}}

\newcommand{\tone}{\boldsymbol t_1}
\newcommand{\ttwo}{\boldsymbol t_2}
\newcommand{\tthree}{\boldsymbol t_3}

\parskip2mm

\newtheorem{theorem}{Theorem}[section]
\newtheorem{lemma}[theorem]{Lemma}
\newtheorem{proposition}[theorem]{Proposition}

\theoremstyle{definition}
\begingroup
\newtheorem{definition}[theorem]{Definition}

\endgroup

\newcommand{\Qz}{\mathbb{Q}}
\newcommand{\Rz}{\mathbb{R}}
\newcommand{\Nz}{\mathbb{N}}
\newcommand{\Zz}{\mathbb{Z}}

\newcommand{\eps}{\varepsilon}

\newcommand{\be}[1]{\begin{equation}\label{#1}}
\newcommand{\ee}{\end{equation}}

\usepackage[bottom]{footmisc}

\begin{document}


\title[Microscopical justification of Winterbottom problem]{Microscopical Justification of the Winterbottom problem for well-separated Lattices} 


\author[Paolo Piovano]{Paolo Piovano}
\address[Paolo Piovano]{\CCC Department of Mathematics, Polytechnic University of Milan, P.zza Leonardo da Vinci 32, 20133 Milano, Italy \BBB}
\email{paolo.piovano@polimi.it}

\author{Igor Vel\v{c}i\'c}
\address[Igor Vel\v{c}i\'c]{Faculty of Electrical Engineering and Computing, University of Zagreb, Unska 3, 10000 Zagreb, Croatia}
\email{igor.velcic@fer.hr}

\keywords{\FFF Island nucleation, wetting, dewetting, \EEE Winterbottom shape, discrete-to-continuum passage, $\Gamma$-convergence, atomistic models, surface energy, anisotropy, adhesion, capillarity problems, crystallization.\EEE}

\begin{abstract}
We consider the discrete atomistic setting introduced in  \cite{PiVe1}  to microscopically  justify  the continuum model related to the \emph{Winterbottom problem}, i.e., the problem of determining the equilibrium shape of crystalline film drops resting on a substrate, and relax the rigidity assumption considered in  \cite{PiVe1} to characterize the \emph{wetting} and \emph{dewetting regimes} and to perform the \emph{discrete to continuum passage}. In particular, all results of  \cite{PiVe1} are  extended  to the setting where the distance between the  reference lattices for the film and the substrate is not smaller than the optimal bond length between a film and a substrate atom. Such optimal film-substrate bonding distance is prescribed together with the optimal film-film distance by means of two-body atomistic interaction potentials of Heitmann-Radin type, which are both taken into account in the discrete energy, and in terms of which the wetting-regime threshold and the effective expression for the wetting parameter in the continuum energy are determined.

 \end{abstract}

\subjclass[2010]{\PPP 49JXX, 82B24.\EEE} 
\maketitle

\pagestyle{myheadings}

\section{Introduction}

In this manuscript we address the classical problem of  determining the equilibrium shape formed by \FFF crystalline film drops \EEE resting upon rigid substrates possibly of a different material. Such problem has a variational nature as the solution can be looked for among the minimizers of a surface energy that depends both on the  \emph{drop anisotropy} at the drop surface   and on the \emph{drop wettability} at the contact region with the flat substrate. By exploiting the interplay between the drop anisotropy and wettability W.\ L.\ Winterbottom
provided in \cite{Winterbottom}  the first phenomenological prediction of the solution  by a direct construction of  the nowadays called  \emph{Winterbottom shape}   (see Figure \ref{fig: winterbottom}). 
We intend here to move forward from the results obtained by the authors in \cite{PiVe1} by extending 
 the discrete to continuum derivation of the energy considered by Winterbottom (see also  \cite{srolovitz1,srolovitz2}) established in \cite{PiVe1}  to all mutual positionings of the reference lattices  of the film and of the substrate, whose distance is not smaller than the optimal bond distance between a film and a substrate atom.

More precisely, the energy considered by W.\ L.\ Winterbottom  in the continuum planar setting in  \cite{srolovitz1,srolovitz2,Winterbottom} expressed in modern terminology  is defined  for any set of finite perimeter  $D\subset\Rz^2\setminus S$ representing the area occupied by a film drop outside of the \emph{substrate region} $S\subset\Rz^2$ by 
\begin{equation}\label{winterbottomsurfaceenergy} 
 \mathcal{E}(D):=\int_{\partial^* D\setminus\partial S}\Gamma(\nu(\xi))\,  \mathrm{d}\mathcal{H}^{1}(\xi) +  \sigma \mathcal{H}^{1}(\partial^* D\cap\partial S),
\end{equation}
where  $\Gamma:\mathbb{S}^{1} \to \mathbb{R}$ is the \emph{anisotropic surface tension} related to the material of the crystalline drop and defined on the normal $\nu\in\mathbb{S}^{1}$ at the reduced boundary $\partial^* D$,  while $\sigma\in\Rz$ is a parameter representing the \emph{wetting},  namely, the ability  of the drop to maintain contact with the specific solid surface $\partial S$, which is the topological boundary of $S$, and $\mathcal{H}^{1}$ is the 1-dimensional Hausdorff measure (see \ref{sec:continuum_model} for more details). By adapting the phenomenological construction provided by G. Wulff  in  \cite{Wulff01} of the set, nowadays named after him, for the related problem of finding  the equilibrium shape of a free-standing crystal with anisotropy $\Gamma$ in the space, i.e., for the minimizer of \eqref{winterbottomsurfaceenergy} in the case with $S=\emptyset$ (see also \cite{F,FM2}),  that is
$$
W_\Gamma:=\{x\in\Rz^d\,:\, x\cdot \nu\leq \Gamma(\nu) \text{ for every } \nu\in S^{d-1}\}, 
$$ 
 the Winterbottom shape for the flat substrate $S=\Rz\times\{s\in\Rz\,:\, s<0\}$ is the set 
$$W_{\Gamma,\sigma}:=W_\Gamma\cap\{x\in\Rz^d\,:\, x_d\geq - \sigma\},$$
 as depicted in Figure \ref{fig: winterbottom}. 
\begin{figure}
\includegraphics[width=0.49\textwidth]{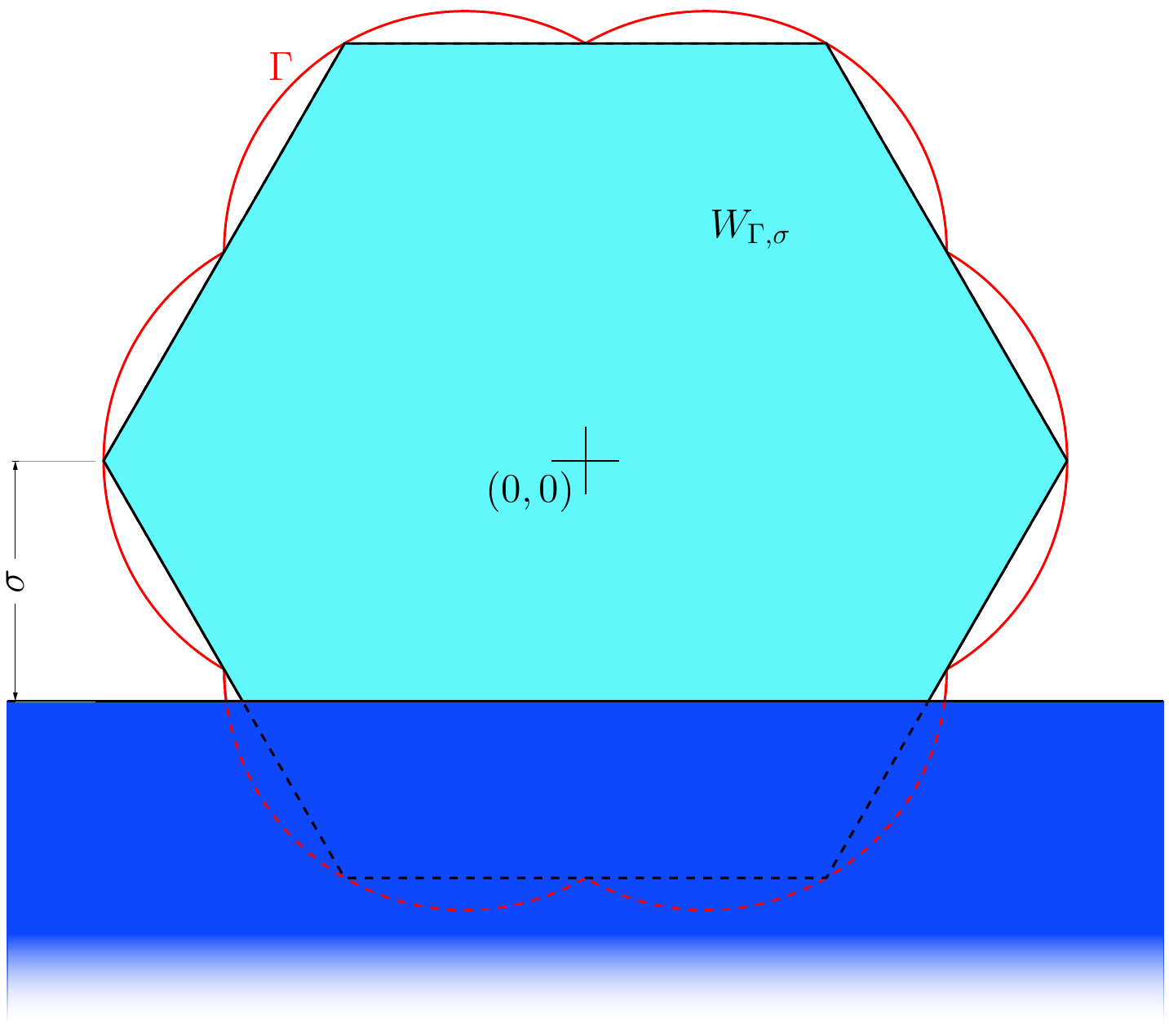}
\includegraphics[width=0.49\textwidth]{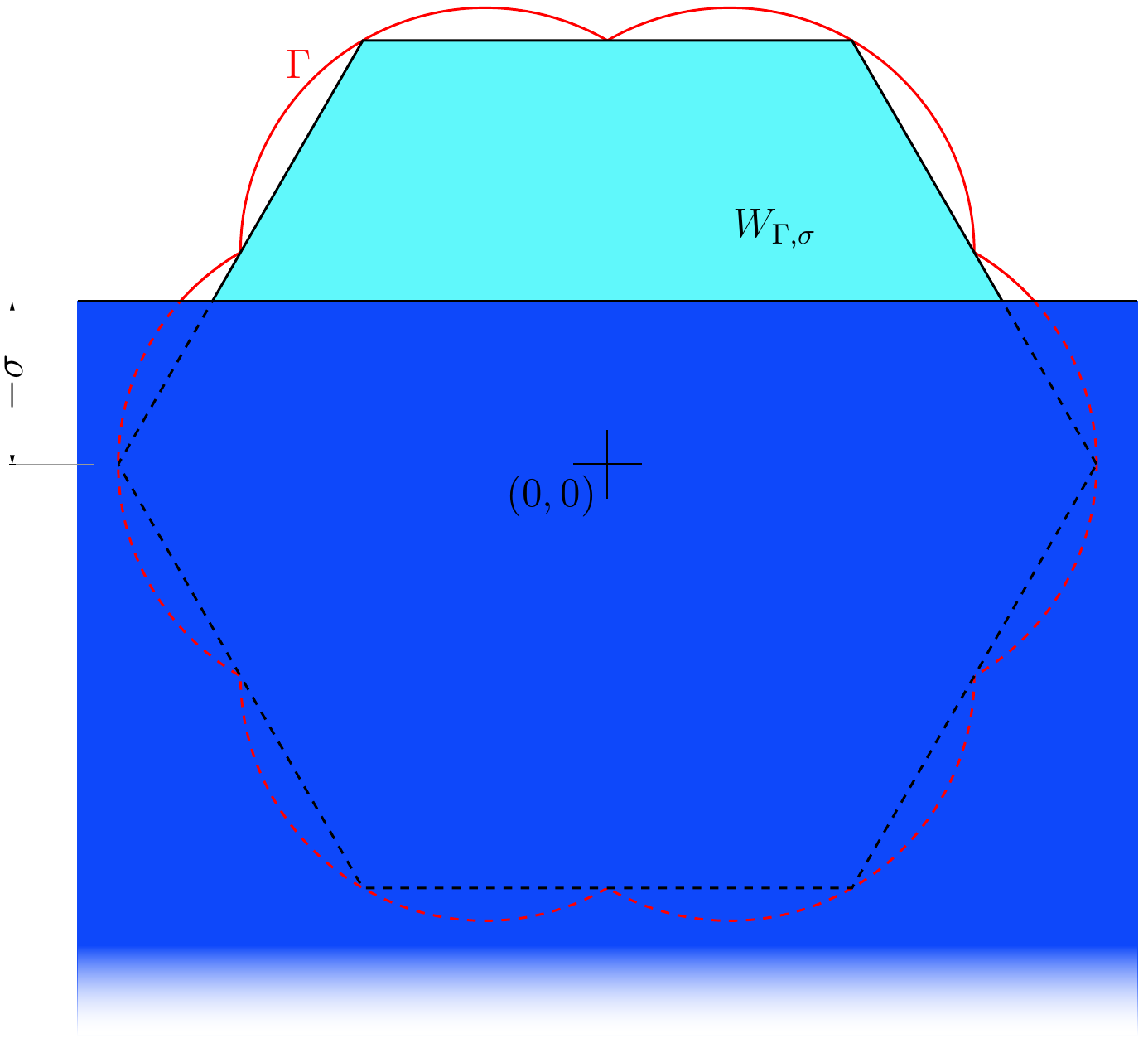}
\caption{\PPP Winterbottom construction for the minimizer of $ \mathcal{E}$, \PPP on the left for $\sigma>0$ and on the right for $\sigma<0$ \EEE (see  \cite{Winterbottom}). }
\label{fig: winterbottom}
\end{figure}

For the justification of the problem in dimension $d=2$ in the context of statistical mechanics and the \emph{Ising model} we refer to 
the review  \cite{DKS} (see also \cite{ISc,KP})  for the Wulff shape in the scaling limit at low-temperature and  to \cite{Dodineau-etal,PV1,PV2} for the setting related to the Winterbottom shape, while  in the context of atomistic models a rigorous discrete to continuum passage  for triangular reference lattices has been first carried out by means of $\Gamma$-convergence in \cite{Yeung-et-al12} for the Wulff shape and then extended to the Winterbottom situation in \cite{PiVe1}. The emergence of the Wulff shape has been also deduced for the square lattice in \cite{MPS,MPS2} and the hexagonal lattice  \cite{DPS2}   by means of a different approach based on induction techniques related to the crystallization problem  \cite{Heitmann-Radin80}, and of the quantification of the deviation of discrete ground states from the asymptotic Wulff shape in the so called $n^{3/4}$ law (where $n$ is the number of atoms), which was previously introduced in \cite{Schmidt}, and then extended to those settings  (see also \cite{DPS1}), and more recently to higher dimensions in  \cite{MPSS,MS}. A derivation by $\Gamma$-convergence  of an energy of the type \eqref{winterbottomsurfaceenergy} coupled with a  bulk elastic term in  the context of models for epitaxially-strained thin films introduced in \cite{DP1,DP2,FFLM2, S2,spencer1997equilibrium} has been instead determined in  \cite{KrP} under a \emph{graph constraint} for the region occupied by the film drop.

We follow the approach of  \cite{PiVe1}, where the authors introduced a specific discrete setting, which here we intend to generalize, to initiate the analysis for the Winterbottom problem in the context of atomistic models by taking into consideration not only atomistic interactions among each other film atoms as in \cite{Yeung-et-al12}, but also between film and substrate atoms, in terms of two-body potentials $v_{F,\alpha}$ of Heitmann-Radin sticky-disc  type \cite{Heitmann-Radin80}  with $\alpha=F,S$ for film-film and film-substrate atomistic interactions, respectively. More precisely, 
in   \cite{PiVe1} the discrete setting involves a reference lattice $\mathcal{L}_S\subset \overline{S}:=\Rz\times\{s\in\Rz\,:\, s\leq0\}$ for the substrate atoms, which  is assumed to be fully occupied, and a triangular reference lattice $\mathcal{L}_F\subset\mathbb{R}^2\setminus \overline{S}$ defined by
$$
\mathcal{L}_F:=\{x_F+k_1 \tone+k_2 \ttwo\,:\, \UUU \textrm{$k_1\in\Zz$ and $k_2\in\Nz\cup\{0\}$} \BBB\}
$$
for a fixed $x_F\in\Rz^2\setminus\overline{S}$, which we call \emph{film-lattice center},
$$\tone:={1 \choose 0},\quad\textrm{and}\quad\ttwo:= \UUU  \frac{1}{2}{1 \choose \sqrt{3}},
$$
\noindent on which film atoms are free to choose the most convenient configuration under the considered atomistic interactions $v_{F,\alpha}$ with $\alpha=F,S$. Thus,  for each fixed number $n\in\mathbb{N}$  all sets $D_n:=\{x_1,\ldots,x_n\}\subset\mathcal{L}_F$ are considered admissible configurations of $n$ film atoms (see Figure \ref{fig:lattices}), and the overall energy $V_n: (\mathbb{R}^2\setminus \overline{S})^{n}\to\Rz\cup\{\infty\}$ of a configuration $D_n:=\{x_1,\ldots,x_n\}\subset\mathcal{L}_F$ is 
defined by
$$
V_n(D_n)=V_n(x_1,\ldots,x_n):=\sum_{i\neq j} v_{FF}(|x_i-x_j|)\,+\,\sum_{i=1}^n \sum_{s\in\partial\mathcal{L}_S} v_\textrm{FS}(|x_i-s|),
$$
where the sums are extended to nearest neighbors and $\partial\mathcal{L}_S:=\mathcal{L}_S\cap\Rz\times\{s\in\Rz\,:\, s=0\}$ is referred to as the \emph{substrate surface} or \emph{wall}. Notice that in \cite{PiVe1} the minimum of the atomistic potentials $v_{F\alpha}$ is fixed at $e_{F\alpha}>0$  with values $-c_{\alpha}<0$ for $\alpha=F,S$, respectively. where $e_{FF}$ is normalized at 1 and coincide with the lattice parameter of  $\mathcal{L}_F$, while the following \emph{rigidity assumption} was made in regard of $e_{FS}$: 
\begin{equation}\label{rigidity_assumption} 
\text{$x_F:=x_F^0$\quad  with\quad  $x_F^0:=(0,e_{FS})$},
\end{equation}
which in particular by the choice of $\partial\mathcal{L}_S$ was  preventing the possibility of two substrate neighbors for film atoms (which is the maximum possible number of substrate neighbors  for the situation with a flat substrate and a  Heitmann-Radin potential  $v_{FS}$). 

The aim of this manuscript is to relax the rigidity assumption \eqref{rigidity_assumption} and obtain the full generality of the results in \cite{PiVe1} for all the positionings $x_F\in\Rz^2\setminus\overline{S}$ for $\mathcal{L}_F$ 
  for which 
\begin{equation}\label{firstinterface_intro}
{\rm dist}(\partial \mathcal{L}_F,\partial \mathcal{L}_S)\geq e_{FS}  
\end{equation}
(which include \eqref{rigidity_assumption}) and hence, in particular allowing film atoms (resp. substrate atoms) to display from zero to two substrate  neighbors (resp. film neighbors).

The results are threefold (see Section \ref{sec: main_result}): First, we characterize in Theorem \ref{wetting_theorem} the wetting regime, i.e., when film atoms are expected  to spread on the substrate surface instead of accumulating in island clusters on top of it, in terms of the parameters $c_{\alpha}$  related to the atomistic potentials $v_{F\alpha}$ for $\alpha=F,S$. Furthermore, also the corresponding minimizers in such regime are explicitly isolated for every $n\in\Nz$ by  induction techniques related to crystallization problems (see, e.g., \cite{Heitmann-Radin80,MPS}). In particular, as a result of the much more involved setting  an extra  threshold condition  under which the film spreads in an infinitesimally thick layer is found for certain settings in between the thresholds already determined in \cite{PiVe1}. 

Second,  we prove in Theorem \ref{connectness} that in the dewetting regime, where  film atoms are expected to form solid-state islands (related to regions with positive two-dimensional Lebesgue measure $\mathcal{L}^2$),  the mass of the  solutions of the discrete minimum problems
\begin{equation}\label{discretemodels}
\min_{D_n\subset\mathcal{L}_F}{V_n(D_n)}\
\end{equation}
as the number $n$ of atoms tends to infinity does not spread, but up to a subsequence it is preserved by the connected components of such minimizers  with largest cardinality. This is crucial to overcome the lack of compactness that we have, as already detailed in \cite{PiVe1}, outside the class of \emph{almost-connected configurations}, which are roughly speaking, configurations connected up to a substrate-bond distance (see Section \ref{transformation}  for more details). 

 Finally, we establish in Theorem \ref{thm:convergence_minimizers} that in the dewetting regime the solutions of the discrete problems \eqref{discretemodels} converge as the number $n$ of  film atoms tends to infinity  (up to extracting a subsequence and performing horizontal translations on the substrate $S$) to a minimizer of the Winterbottom energy \eqref{winterbottomsurfaceenergy}   in the family of crystalline-drop regions 
$$
\mathcal{D}_\rho:=\{D\subset\Rz^2\setminus S\,:\, \text{set   of finite  perimeter, bounded and such that  $|D|=1/\rho$}\}, 
$$
where $\rho$ is the atom density  in $\mathcal{L}_F$ per unit area. This convergence of the discrete minimizers in the dewetting regime  is obtained in view of the conservation of mass proven of Theorem \ref{connectness} by following the approach in \cite{Yeung-et-al12,PiVe1} and proving the $\Gamma$-convergence as $n\to\infty$ of properly defined (and rescaled) versions of $V_n$  in the space $\mathcal{M}(\Rz^2)$ of Radon measures  on $\Rz^2$ 
with respect to the weak* convergence of measures. In particular, an effective expression for the wetting parameter $\sigma$ in \eqref{winterbottomsurfaceenergy}  is obtained, i.e., 
\begin{equation}\label{intro_sigma}
\sigma:=\begin{cases}
2c_F-\displaystyle\frac{2c_S}{q} & \text{for $C_i$ with $i=1,3,4$,}\\ 
&\\
2c_F-\displaystyle\frac{c_S}{q}  & \text{for $C_2$,}\\
   \end{cases}
\end{equation}
where  $q\in\Nz$ relates to the optimal substrate bond  $e_S:=q/p$ in $\partial\mathcal{L}_S$ with  $p\in\Nz$, and $C_i$ with $i=1,\dots,4$ are categories in which the various settings allowed by \eqref{firstinterface_intro} can be classified (see Section \ref{sec: main_result} for more details). 

Our methodology consists in introducing a more general setting (even more general of the settings described above) with substrate wall
 \begin{equation}\label{wall_intro}
\partial\mathcal{L}_S=\left\{s_k:=\left(\frac{k}{p},0\right) \,:\,k\in q\Zz\cup\left(q\Zz+r\right)\right\}
\end{equation}
for \emph{wall parameters} $z:=(p,q,r)\in Z_S$ with
$$Z_S:=\left\{(p,q,r)\in \Nz\times\Nz\times\Nz_0 : \text{$p$ and $q$ co-prime, $0\leq r\leq\frac{q}{2}$, and $p=1$ if $r\neq0$}\right\}$$
where  $\Nz_0:= \Nz\cup\{0\}$, which reduces to the relevant settings  described above for 
$$z\in Z_S^0:=\{(p,q,r)\in Z_S : r=0\}$$
 (and in particular to the model of  \cite{PiVe1} for $x_F:=x_F^0$ and $z\in Z_S^0$) and to an extra \emph{auxiliary setting} for $z\in Z_S^1:=Z_S\setminus Z_S^0$. Such an auxiliary setting is carefully determined in order to both be able to implement all the program of  \cite{PiVe1} for it as well, and as the only extra needed  model for treating all the settings with $z\in Z_S^0$ that cannot be reduced to the model in  \cite{PiVe1}. More precisely, we introduce an \emph{equivalence relation} among the various possible settings with substrate wall \eqref{wall_intro} and prove that all the settings with $z\in Z_S^0$ satisfying \eqref{firstinterface_intro} can be reduced either to the model of  \cite{PiVe1} or to the auxiliary model  with $z\in Z_S^1$. Since in particular such equivalence relation preserves all the properties contained in the main results, we are then allowed to transfer such properties from  the model already treated in  \cite{PiVe1} and from the auxiliary model, by directly proving them extra only for the latter.  
 

More specifically,  the classes $C_i$ with $i=1,\dots,4$  for the various settings with $z\in Z_S^0$  are exactly introduced to prove the equivalence to the two specific setting: the settings in classes  $C_i$ for $i=1,\dots,3$ are equivalent to the model in  \cite{PiVe1} (with different lattice parameters) and the settings in the class $C_4$ to the auxiliary model introduced in this manuscript. Moreover, a specific feature of the auxiliary model for $z\in Z_S^1$ is the possibility of having separated pairs of film atoms bonded with the substrate. To accommodate such aspect in the strategy of \cite{PiVe1} we need to include the extra wetting condition appearing in Theorem \ref{wetting_theorem}, extend the strip argument used to establish the compactness for almost-connected configurations used to prove the conservation of mass of Theorem \ref{connectness} (see the more involved definition for the \emph{strip energy} in Section \ref{local_energy}), and finally adjust the ``boundary-averaging'' arguments used for the lower bound in the proof of the $\Gamma$-convergence, which in turns is responsible for the more involved form of the effective expression for the wettability $\sigma$ in \eqref{intro_sigma} with respect to  \cite{PiVe1}.

 
The paper is organized as follows. In Section \ref{setting} we introduce the mathematical setting and the main results of the paper. In Section \ref{sec:M1}  we implement the program of  \cite{PiVe1} for the auxiliary model with substrate wall $\partial\mathcal{L}_S$ of the type \eqref{wall_intro} with $z\in Z_S^1$. In Section \ref{sec:main_results}  we prove the equivalence property for each class $C_i$ with $i=1,\dots,4$ of settings satisfying  \eqref{firstinterface_intro} and the main results of the paper. 
 
\newpage

\section{Mathematical setting \PPP and main results} \label{setting}
\PPP
In this section we introduce the discrete and continuous models, the notation and definitions  used throughout the paper, and the main results. 

 


\subsection{\PPP Setting with lattice configurations}\label{sec:lattice_configurations}   We begin by introducing a reference \SSS set \BBB  $\mathcal{L}$  for the atoms of the substrate and of the film in  the plane $\Rz^2$ for  a chosen cartesian coordinate system. 
We define  $\mathcal{L}:=\mathcal{L}_S\cup\mathcal{L}_F$, where $\mathcal{L}_S\subset\overline{S}$ denotes the reference lattice for the substrate atoms, with $S:=\Rz\times\{s\in\Rz\,:\, s<0\}$  referred to as the  \emph{substrate region},  and  $\mathcal{L}_F\subset\Rz^2\setminus\overline{S}$ is the reference lattice for the film atoms. 

More precisely, we consider the substrate lattice as a fixed set, i.e., every lattice site in $\mathcal{L}_S$ is occupied by a substrate atom,  such that 
\begin{equation}\label{wall}
\partial\mathcal{L}_S:=\mathcal{L}_S\cap\{(s,0):\,s\in\Rz\,\}=\left\{s_k:=\left(\frac{k}{p},0\right) \,:\,k\in q\Zz\cup\left(q\Zz+r\right)\right\}
\end{equation}
for $z:=(p,q,r)\in Z_S$ with
$$Z_S:=\left\{(p,q,r)\in \Nz\times\Nz\times\Nz_0 : \text{$p$ and $q$ co-prime, $0\leq r\leq\frac{q}{2}$, and $p=1$ if $r\neq0$}\right\}$$
where  $ \Nz_0:= \Nz\cup\{0\}$.  We refer to $\partial\mathcal{L}_S$ as to the  \emph{substrate wall} (or \emph{substrate surface}) and the vectors $z=(p,q,r)\in Z_S$  as the \emph{wall vector} with \emph{wall parameters} $p$, $q$, and $r$. We notice that we choose such a definition for $\partial\mathcal{L}_S$ even though it could be simplified, in order to directly include the setting used in \cite{PiVe1} without inconsistent notation between the two papers. Furthermore, notice that the choice of wall vectors $z=(1,q,r)$ with $q/2<r\leq q$ are excluded in the definition of $Z_S$ without loss of generality up to a translation of the origin  in the $\tone$ direction. 

For the film lattice   $\mathcal{L}_F$ we choose a triangular lattice with parameter  $e_F$ normalized to 1 (with respect to the wall parameters), namely 
\begin{equation}\label{LFlattice}
\mathcal{L}_F:=\{x_F+k_1 \tone+k_2 \ttwo\,:\, \UUU \textrm{$k_1\in\Zz$ and $k_2\in\Nz\cup\{0\}$} \BBB\}
\end{equation}
for 
$$\tone:={1 \choose 0},\quad\textrm{and}\quad\ttwo:= \UUU  \frac{1}{2}{1 \choose \sqrt{3}}, 
$$
and
$$x_F:=(x_F^1,x_F^2)\in\Rz^2\setminus\overline{S},$$ 
which we refer to as the \emph{film-lattice center}. Let also $\tthree$ be 
$$
\tthree:={0 \choose 1}.
$$


 The sites of the film lattice are not assumed to be completely filled and we refer to a set of $n\in\Nz$ sites $x_1,\dots,x_n\in \mathcal{L}_F$ occupied by film atoms as a \emph{crystalline configuration} denoted by $D_n:=\{x_1,\dots,x_n\}\subset \mathcal{L}_F$. Notice that the labels for  the elements of a configuration $D_n$ are uniquely determined by increasingly assigning them with respect to a chosen fixed order \FFF on \EEE the lattice sites \FFF of \EEE $\mathcal{L}_F$.  With a slight abuse of notation we refer to $x\in D_n$ as an atom in $D_n$ (or in $\mathcal{L}_F$). We denote the family of crystalline configurations with $n$ atoms by $\mathcal{C}_n$. \FFF Furthermore, given a set $A\subset \mathbb{R}^2$, its cardinality is indicated by $\#A$, so that $$\mathcal{C}_n:=\{A\subset\mathcal{L}_F\,:\, \#A=n\}.$$ \EEE

For every atom $x\in\mathcal{L}_F$ we take into account both its atomistic interactions with other film atoms and with the substrate atoms, by considering the two-body atomistic potentials \SSS $v_{FF}$ \BBB and \SSS$v_{FS}$\BBB, respectively. We restrict to first-neighbor interactions and we define \SSS $v_{F\alpha}$ \BBB for $\alpha:=F,S$ as 
\SSS \begin{equation} \label{defvf} 
v_{F\alpha}(s):=\begin{cases} +\infty &\mbox{if }  s<e_{F\alpha},\\ 
-c_{\alpha} &\mbox{if }  s=e_{F\alpha},\\ 
0 & \mbox{if }  s>e_{F\alpha},
\end{cases}
\end{equation} 
with  $c_{\alpha}>0$, $e_{FF}:=e_F=1$, and $e_{FS}>0$.
\BBB

We  denote by $\partial \mathcal{L}_F$ the \emph{lower boundary} of the film lattice, i.e.,
$$
\partial\mathcal{L}_F:=\{x_F+k_1 \tone:\, \UUU \textrm{$k_1\in\Zz$} \BBB\}
$$
\PPP and by  $\partial \mathcal{L}_{FS}$ the collection of sites  in the lower boundary of the film lattice at a distance of \CCC $e_{FS}$ \BBB from an atom in $\partial\mathcal{L}_S$, i.e., 
$$\partial \mathcal{L}_{FS}:= \partial \mathcal{L}_F \cap \{x\in\Rz^2 : {\rm dist}(x,\partial\mathcal{L}_S)=e_{FS}\}.$$

 In the following, we refer to  \emph{film} and \emph{substrate neighbors}   of an atom $x$ in a configuration $D_n$ as to those atoms   in  $D_n$ at distance 1 from $x$, and to those atoms in  $\mathcal{L}_S$ at distance $e_{FS}$  from $x$, respectively. Analogously, we refer to  \emph{film} and \emph{substrate bonds} of an atom $x$ in a configuration $D_n$ as to those segments connecting $x$ to its film and substrate neighbors, respectively.  More generally the same terminology will be extended for sites of $\mathcal{L}_F$. 
We also refer to the union of the closures of all film bonds of atoms in a configuration $D_n$ as the \emph{bonding graph} of  $D_n$, and we say that a crystalline configuration $D_n$ is \emph{connected} if every $x$ and $y$ in $D_n$ are connected through a path in the bonding graph of $D_n$, i.e., there exist $\ell\leq n$ and $x_k\in D_n$ for $k:=1,\dots,\ell$ such that $|x_k-x_{k-1}|=1$, $x_1=x$, and $x_\ell=y$. \FFF Moreover,  we define  the \emph{boundary of a configuration} $D_n\in\mathcal{C}_n$ as the set  $\partial D_n$ of atoms of $D_n$ with less than 6 film neighbors. \FFF We notice here that with a slight abuse of notation, given a  set $A\subset\mathbb{R}^2$  the notation $\partial A$ will also denote the topological boundary of a set $A\subset\mathbb{R}^2$  (which we intend to be always the way to interpret the notation when applied not to configurations in $\mathcal{C}_n$, or to lattices, such as for $\partial \mathcal{L}_{S}$, $\partial \mathcal{L}_{F}$, and $\partial \mathcal{L}_{FS}$). \EEE 

\EEE



The energy $V_n$ of a configuration $D_n:=\{x_1,\ldots,x_n\}\subset \mathcal{L}_F$ of $n$ particles is defined by 
\begin{equation}\label{V}
\UUU V_n(D_n)=\EEE V_n(x_1,\ldots,x_n)\PPP:=\UUU\sum_{i\neq j} v_{FF}(|x_i-x_j|)\,+\,  E_S(x_1,\ldots,x_n)\EEE
\end{equation}
where  $E_S:(\Rz^2\setminus\overline{S})^{n}\EEE\to\Rz\cup\{\infty\}$ represents the overall contribution of the substrate interactions defined  as  
\begin{equation}\label{substrateenergy}
E_S(D_n)=E_S(x_1,\dots,x_n):=\sum_{i=1}^n v^1(x_i),
\end{equation}
where the one-body potential $v^1$ is defined by
\begin{equation}\label{substrate2}
v^1(x):=\sum_{s\in\mathcal{L}_S} v_\textrm{FS}(|x-s|)
\end{equation}
for any $x\in\Rz\times\{r\in\Rz\,:\, r>0\}$.  Notice that from the definition of $v_{FS}$ and $x_F$ 
$$v^1(x)\in\{\infty,0,-c_S, -2c_S\}$$
for any $x\in\mathcal{L}_F$.

Furthermore,  we considered the setting from the Introduction with \eqref{firstinterface_intro}, i.e.,
\begin{equation}\label{firstinterface}
{\rm dist}(\partial \mathcal{L}_F,\partial \mathcal{L}_S)\geq e_{FS},  
\end{equation}
but without loss of generality we can actually reduce to the setting  
\begin{equation}\label{wlog_LFS}
\partial \mathcal{L}_{FS}\neq\emptyset
\end{equation}
 (which implies the equality in \eqref{firstinterface}) since otherwise by the choice of \eqref{substrate2}, 
the substrate interaction $E_S\equiv0$ and the same analysis of \cite{Yeung-et-al12}  applies, with the consequence that, up to rigid transformations, minimizers converge to a Wulff shape in $\Rz^2\setminus S$.  In particular, notice that   the value $-2c_S$ for the potential $v^1$ was always prevented in \cite{PiVe1}, while in our setting not. Moreover, we observe that we do not directly refer to the situation with  wall vectors in $Z^0_S$ and $q/p\in\Rz\setminus\Qz$, since in that case if $\partial \mathcal{L}_{FS}\neq\emptyset$, then  $\# \partial\mathcal{L}_{FS}\leq2$, and so the contribution of $E_S$ is also negligible and the analysis can be easily reduced to \cite{Yeung-et-al12} as well (as already noticed in \cite{PiVe1} for the case $x_F=x^0_F$ and $r=0$), with the consequence that, up to rigid transformations, minimizers converge to a Wulff shape in $\Rz^2\setminus S$ with the Wulff-shape boundary intersecting $\partial S$. 
Finally, by \eqref{wlog_LFS} we can always choose $x_F$ in the definition of $\mathcal{L}_F$ such that
\begin{equation}\label{wlog_x_F}
x_F \in \partial \mathcal{L}_{FS}.
\end{equation}

\subsection{Model comparison} \label{comparison} 

We observe that the setting with $x_F:=x_F^0$ with
\begin{equation}\label{eFSzero} x_F^0:=(0,e_{FS}),\end{equation} 
and $r=0$ in the definition of $\partial\mathcal{L}_S$ was already studied in \cite{PiVe1}, whose analysis we intend  here to generalize to the case $x_F\neq x_F^0$ with 
\begin{equation}\label{firstinterface2}
{\rm dist}(\partial \mathcal{L}_F,\partial \mathcal{L}_S)=e_{FS}.  
\end{equation}
 The case of   $r\neq0$ is here introduced as an auxiliary setting since as shown in Section \ref{sec:main_results} we can reduce some other settings to the situation with $r\neq0$ and $x_F:=x_F^0$, which we throughly analyzed in Section \ref{sec:M1} for this reason obtaining analogous results of the ones in \cite{PiVe1}.

In order to perform such program, we need to compare the various settings. To this end we denote each discrete setting of Section \ref{sec:lattice_configurations} related to a specific positioning $x_F:=(x_F^1,x_F^2)\in\Rz^2\setminus\overline{S}$, wall vector $z:=(p,q,r)\in Z_S$, and vector
$$\Lambda:=(e_{FS},c_F,c_S)\in (\Rz^+)^3,$$  
which we refer to as the \emph{atomistic interaction vector}, where $\Rz^+:=\{s\in\Rz\,:\, s>0\}$, as the model 
\begin{equation}\label{models}
\text{$\mathcal{M}_{\Lambda}(x_F,z)$ or $\mathcal{M}_{\Lambda}(x_F,p,q,r)$},
\end{equation}
and  when comparing different models we also specify the dependence on $x_F$ and $z$ in the lattice notations, i.e., $\mathcal{L}_S:=\mathcal{L}_{S}(z)$, $\mathcal{L}_F:=\mathcal{L}_F(x_F)$ and so, also $\partial \mathcal{L}_{S}:=\partial \mathcal{L}_{S}(z)$, $\partial \mathcal{L}_{F}:=\partial \mathcal{L}_{F}(x_F)$, $\partial \mathcal{L}_{FS}:=\partial \mathcal{L}_{FS}(x_F,z)$, and on the interaction vector $\Lambda$ in the energies $V_{n}(D_n):=V_{n,\Lambda}(D_n)$ (for $v_{F\alpha}$ defined as in \eqref{defvf}  with respect to the parameters in $\Lambda$), for every configuration $D_n$ in the corresponding families $\mathcal{C}_n:=\mathcal{C}_n(x_F)$.  For simplicity such explicit dependence will be instead avoided in the notations when not comparing different models \eqref{models}.

Furthermore, for simplicity when $x_F=x_F^0$ we use the notation: 
\begin{itemize}
\item[-] $\mathcal{M}_{\Lambda}^0(z):=\mathcal{M}_{\Lambda}(x_F^0,z)$ and with a slight abuse of notation, also $\mathcal{M}_{\Lambda}^0(p,q):=\mathcal{M}_{\Lambda}^0(z)$, when  $z:=(p,q,0)\in Z_S^0$ with
$$Z_S^0:=\{(p,q,r)\in Z_S : r=0\}=\{(p,q,0) : \text{$p$ and $q$ are co-prime integers}\};$$
\item[-] $\mathcal{M}_{\Lambda}^1(z):=\mathcal{M}_{\Lambda}(x_F^0,z)$ and with a slight abuse of notation, also $\mathcal{M}_{\Lambda}^1(q,r):=\mathcal{M}_{\Lambda}^1(z)$, when  $z:=(1,q,r)\in Z_S^1$ with 
$$Z_S^1:=Z_S\setminus Z_S^0=\{(p,q,r)\in Z_S : r\neq0\}=\left\{(1,q,r)\in\Nz^3 : 0<r\leq\frac{q}{2}\right\}.$$
\end{itemize} 
\RRR   \BBB
Notice that each model $\mathcal{M}_{\Lambda}^0(p,q)$  coincides with the model analyzed in \cite{PiVe1} and that $\mathcal{M}^1_{\Lambda}(2,1)=\mathcal{M}^0_{\Lambda}(1,1)$ for every $\Lambda\in(\Rz^+)^3$. 
Furthermore, for every model $\mathcal{M}_{\Lambda}(x_F, z)$ with $z=(p,q,0)\in Z_S^0$ the distance between two atoms in $\partial \mathcal{L}_S(z)$ is constant and equal to 
\begin{equation}\label{eSratio}
e_S:=\frac{q}{p}
\end{equation}
which we refer to in these settings as the optimal \emph{substrate bond distance}.

Furthermore,  we  introduce an equivalence relation among models of type \eqref{models}, which turns out to be an \emph{equivalence relation} among  models of type \eqref{models}.

\begin{definition}\label{equivalent_model}
Let $\mathcal{M}^{\alpha}:=\mathcal{M}_{\Lambda^{\alpha}}(x_F^{\alpha},z^{\alpha})$ be two models with film-lattice centers $x_F^{\alpha}\in\Rz^2\setminus\overline{S}$, wall vectors $z^{\alpha}$, and interaction vectors $\Lambda^{\alpha}\in(\Rz^+)^3$ with respect to an index parameter $\alpha\in\{a,b\}$. Let also $\mathcal{C}_{n}^{\alpha}:=\mathcal{C}_{n}(x_F^{\alpha})$ and $V_n^{\alpha}:= V_{n,\Lambda^{\alpha}}$ be the related family of configurations and energy, respectively.  We say that
 the model $\mathcal{M}^a$ is equivalent to  $\mathcal{M}^b$ if 
 $$V_n^a(D_n^a)=V_{n}^b(D_n^b)$$ for every $D_n^a\in \mathcal{C}_n^a$, where $D_n^b:=D_n^a-x_F^a+x_F^b$ is referred to as the \emph{associated configuration} in $\mathcal{C}_n^b$ of the configuration $D_n^a\in\mathcal{C}_n^a$. 

\end{definition} 	

We recall that in \cite[Section 8]{PiVe1} the authors already addressed the existence of some examples of models of the type \eqref{models} which are equivalent to $\mathcal{M}^0_{\Lambda'}(z')$ for proper choices of the interaction vectors $\Lambda'\in (\Rz^+)^3$ and the wall vectors  $z'\in Z_S^0$ (see Figure \ref{fig:lattices}).

\begin{figure}
\frame{\includegraphics[width=0.482\textwidth]{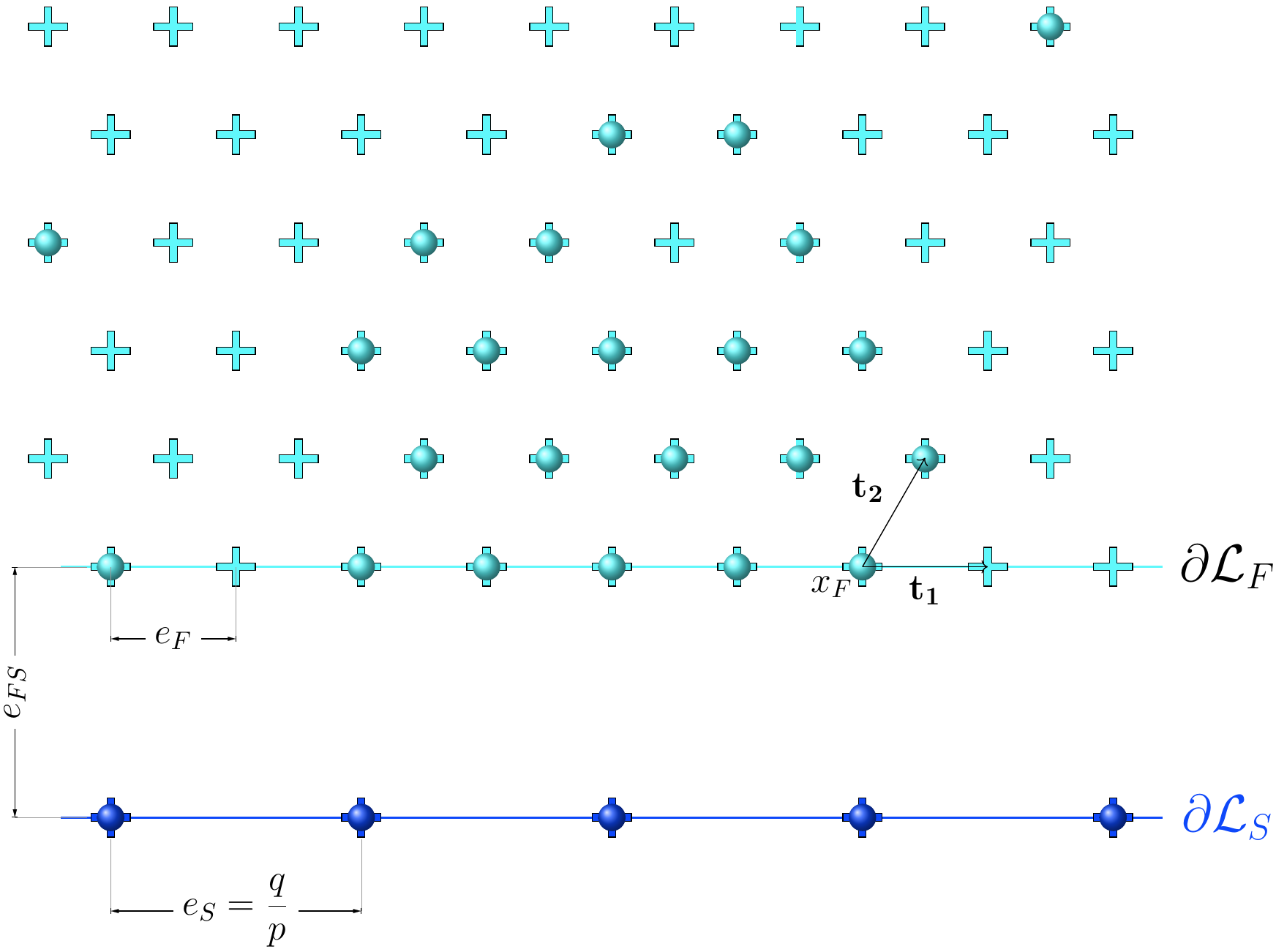}}\quad\frame{\includegraphics[width=0.48\textwidth]{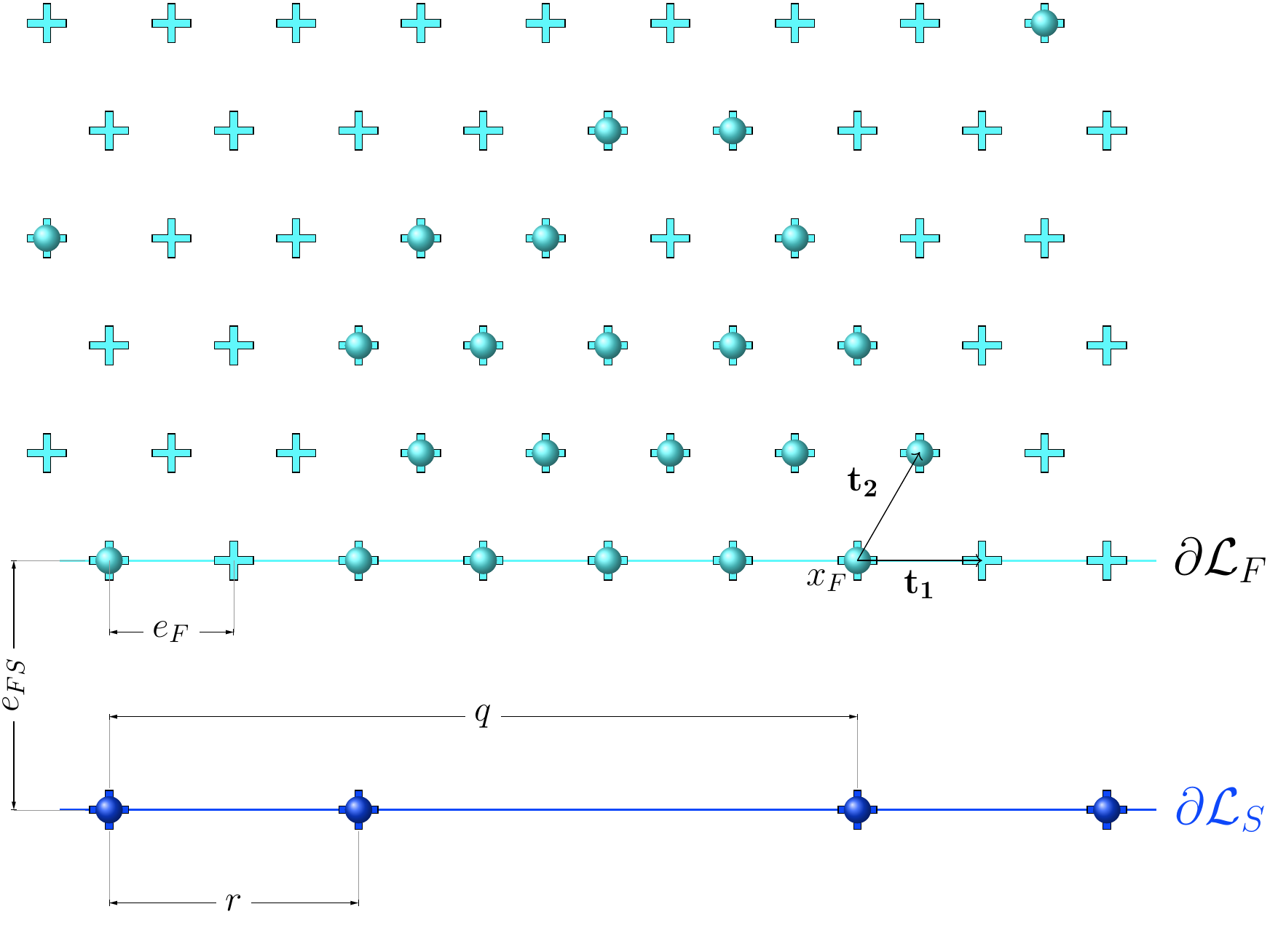}}
\caption{\PPP Two examples of models $\mathcal{M}_{\Lambda}(x_F,p,q,r)$ of the type \eqref{models} are depicted: on the left $\mathcal{M}_{\Lambda}^0(1,2)$ and on the right $\mathcal{M}_{\Lambda}^1(6,2)$. In both settings we can see a portion of the lattice border of the substrate and  a portion of the film lattice, i.e.,  $\partial\mathcal{L}_S$ and  $\mathcal{L}_F$, respectively. While  $\partial\mathcal{L}_S$ is fully occupied with substrate atoms represented by dark-blue balls, only the sites of $\mathcal{L}_F$ related to two  configurations, which are associated configurations with respect to Definition \ref{equivalent_model},  are occupied with film atoms represented by light-blue balls. 
}
\label{fig:lattices}
\end{figure}

\PPP 
\subsection{Setting with Radon measures} \label{radon_model} 
The $\Gamma$-convergence result is established  for a version of the previously described discrete model expressed in terms of \emph{empirical measures} since it is obtained with respect to the weak* topology of Radon measures \cite{AFP}. We denote the space of Radon measures on $\Rz^2$ by $\mathcal{M}(\Rz^2)$ and we write $\mu_{n}\stackrel{*}{\rightharpoonup} \mu$ to denote
the  convergence of  a sequence  $\{\mu_n\}\subset\mathcal{M}(\Rz^2)$ to a measure $\mu\in\mathcal{M}(\Rz^2)$ with respect to the weak* convergence of measures. 

Fix a model $\mathcal{M}_{\Lambda}(x_F,z)$ for a film-lattice center $x_F\in\Rz^2\setminus\overline{S}$, a wall vector $z\in Z_S$, and an interaction vector $\Lambda:=(e_{FS},c_F,c_S)\in (\Rz^+)^3$. The empirical measure $\mu_{D_n}$ \emph{associated to a configuration} $D_n:=\{x_1,\dots,x_n\}\in\mathcal{C}_n(x_F)$ is defined by 
 \begin{equation} \label{empiricalmeasures}
 \mu_{D_n}:=\frac{1}{n} \sum_{i=1}^n \delta_{\frac{x_i}{\sqrt n}}, 
\end{equation}
\FFF where $\delta_z$ represents the Dirac measure concentrated at a point $z\in \Rz^2$, \EEE and the family of empirical measures related to configurations in $\mathcal{C}_n(x_F)$ is denoted by $\mathcal{M}_n(x_F)$, i.e.,
\begin{equation}\label{radon_space}
\mathcal{M}_n(x_F):=\{\mu\in\mathcal{M}(\Rz^2)\ :\ \text{there exists $D_n\in\mathcal{C}_n(x_F)$ such that $\mu=\mu_{D_n}$}\}.
\end{equation}

The  functional $I_{n,\Lambda,x_F}$  associated to the configurational energy $V_{n,\Lambda}$ and expressed in terms of Radon measures is given by 
\begin{equation}
 I_{n,\Lambda,x_F}(\UUU\mu\EEE)\PPP:=
 \EEE\left\{\begin{array}{lll} 
  \int_{(\PPP\mathbf{R}^2\setminus\overline{S}\EEE)^2 \backslash \textrm{diag}} n^2 v_{FF}(n^{1/2}|x-y|) d \mu\UUU(x)\EEE 
  \otimes d \mu\UUU(y) &
  &\textrm{\PPP if $\mu\in\mathcal{M}_n(x_F)$, } 
  \\ 
&\hspace{-20ex} \RRR \EEE + \int_{\PPP\mathbf{R}^2\setminus\overline{S}\EEE} n \UUU v^1\EEE (n^{1/2}x) d \UUU \mu(x) \EEE &  \label{radon_functional}\\
& \\  +\infty &&\quad\textrm{otherwise,} 
\end{array}\right.
\end{equation}
\FFF where \EEE
$$ 
\textrm{diag}:=\{(y_1,y_2) \in \Rz^2: y_1=y_2\}. 
  $$	

\PPP We notice that  the two versions of the discrete model are equivalent, since 
\begin{equation}\label{energy_equivalence}
V_{n,\Lambda}(D_n)=I_{n,\Lambda,x_F}(\mu_{D_n})
\end{equation}
for every  configuration $D_n\in\mathcal{C}_n(x_F)$, where $\mu_{D_n}\in\mathcal{M}_n(x_F)$ is defined by \eqref{empiricalmeasures}, and that $D_n$ minimizes $V_{n,\Lambda}$ among crystalline configurations in $\mathcal{C}_n(x_F)$ if and only if $\mu_{D_n}$ minimizes $I_{n,\Lambda,x_F}$  among Radon measures of $\mathcal{M}(\Rz^2)$.

\PPP \subsection{\PPP Local and strip energies} \label{local_energy}  

For the specific models $\mathcal{M}^0_{\Lambda}(z)$ and $\mathcal{M}^1_{\Lambda}(z)$ with $x_F:=x_F^0$ introduced in Section \ref{comparison} we consider localized energies  which together  contribute to the overall energy of configurations. To this end, fix a model $\mathcal{M}_{\Lambda}(x_F^0,z)$  for a choice of $z\in Z_S$ and $\Lambda:=(e_{FS},c_F,c_S)\in (\Rz^+)^3$. We  define the \emph{local energy} \PPP $E_{\rm loc}$  per site $x\in\mathcal{L}_F(x_F^0)$ with respect to a configuration $D_n$, \EEE by
 \begin{equation}\label{Eloc}
 E_{\rm loc}(x):=\begin{cases}
\sum_{y\in D_n\setminus\{x\}} v_{FF}(|x-y|) \,+\,6c_F 
 &\text{if $x\in D_n$,}\\
 0 &\text{if $x\notin D_n$},
 \end{cases}
 \end{equation}
\PPP which corresponds in the case of an atom $x\in D_n$ to the number of missing film bonds of $x$.
 We also refer to deficiency $ E_{\rm def}(x)$ of a site $x\in\mathcal{L}_F(x_F^0)$ with respect to a configuration $D_n$ as to the quantity
  \begin{equation}\label{deficiency}
 E_{\rm def}(x):=  
 \begin{cases}
 E_{\rm loc}(x)\,+\,v^1(x)
 &\text{if $x\in D_n$,}\\
 0 &\text{if $x \notin  D_n$}.
 \end{cases}
  \end{equation}
Furthermore, we define the \emph{strip} $\mathcal{S}(x)$ associated to any lattice site \PPP $x:=(x^1,e_{FS})\in D_n\cap\partial\mathcal{L}_{FS}$ with $x_1\in\Rz$ \EEE as the collection of atoms 
\begin{equation}\label{strip}
\mathcal{S}(x)=\mathcal{S}_{D_n}(x):=\{x,x_{\pm},\tilde{x},\tilde{x}_{\pm}\}\cap D_n
\end{equation}
where \PPP $x_{\pm}$, \EEE $\tilde{x}$,  and $\tilde{x}_{\pm}$ are defined by
\begin{align*}
&x_{\pm}:=x\pm\tone,\\
&\tilde{x}:=(x^1,y_M)\quad \text{where}\quad y_M:=\max\{y\geq0\,:\, (x^1,y)\in D_n\},\\
&\tilde{x}_{+}:=\tilde{x}+\ttwo,\\
&\tilde{x}_{-}:=\tilde{x}+\ttwo-\tone
\end{align*}
\PPP (see Figure \ref{fig:strip}). \EEE
\begin{figure}
\includegraphics[width=0.4\textwidth]{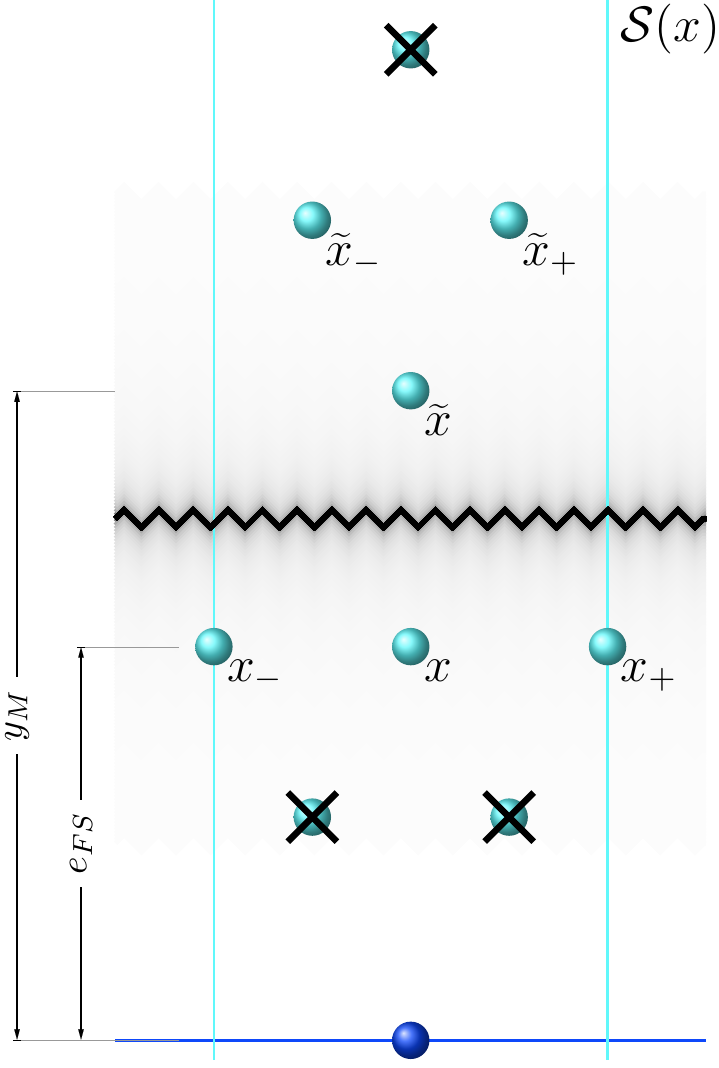}
\caption{\FFF For models of types $\mathcal{M}^0_{\Lambda}(z)$ and $\mathcal{M}^1_{\Lambda}(z)$ with $x_F:=x_F^0$ \EEE the strip $\mathcal{S}(x)$ centered at a atom $x\in\partial\mathcal{L}_{FS}$ of a crystalline configuration $D_n$ is depicted as an example of a strip containing all the elements $x,x_{\pm},\tilde{x},\tilde{x}_{\pm}$ \PPP  with the possibility of the \emph{strip center} $x$ and the \emph{strip top} $\tilde{x}$ to coincide if $y_M=e_{FS}$. The sites  indicated by crossed atoms are sites of the planar lattice $\{x_F+k_1 \tone+k_2 \ttwo\,:\, \textrm{$k_1, k_2\in\Zz$}\}$ that surely are not in $D_n$ by definition of $\mathcal{L}_F$ and  $\mathcal{S}(x)$.
}
\label{fig:strip} 
\end{figure}
In the following we refer to $x$ as the \emph{strip center} of $\mathcal{S}(x)$, to $x_{\pm}$ as \PPP the \EEE \emph{strip lower (right and left) sides}, to $\tilde{x}$ as  \PPP the \EEE  \emph{strip top}, and to $\tilde{x}_{\pm}$ as  \PPP the \EEE \emph{strip above (right and left) sides}.  Note \PPP that $x$ and $\tilde{x}$ coincide if $y_M=\SSS e_{FS}\BBB $.  

We define the \emph{strip energy} associated to a strip $\mathcal{S}(x)$ by 
\begin{equation}\label{Estrip}
  E_{\rm strip}(x):= E_{\rm strip, below}(x)\,+\,E_{\rm strip, above}({\color{black} x}),
\end{equation}
where
\begin{equation}\label{Estrip_below}\\
E_{\rm strip, below}(x):=  w(x) E_{\rm loc}(x)\,+\, w_{+}(x) E_{\rm loc}(x_{+})\,+\, w_{-}(x) E_{\rm loc}(x_{-})-c_S,
\end{equation}
with weights $w(x), w_{\pm}(x)\in\FFF \{1/4,1/2,1\}$ \EEE
defined by
\begin{subnumcases}{\hspace{-1.2cm}(w(x), w_{+}(x),w_{-}(x)):=} 
  \left(1, \frac12,\frac12\right) & if either $z \in Z_S$ and $q\neq 1$,\notag\\
  & or $z \in Z_S^1$ with $z_2\neq1$ and $q=p=1$,\label{Eweight_below1}\\
     \left(\frac12, \frac12,\frac12\right) & if $z \in Z_S^1$ with $z_2=1$ and $q=p=1$, \label{Eweight_below3}\\\notag\\
  \left(\frac12, \frac14,\frac14\right)& if either $z \in Z_S^0 $ and $q=1$, \notag\\
  & or $z \in Z_S^1$ with $q=1$ and $p\neq1$, \label{Eweight_below2}

\end{subnumcases}


and    
\begin{equation}  \label{Estrip_above}
 E_{\rm strip, above}(x):=\begin{cases}
E_{\rm loc}(\tilde{x})\,+\, \SSS w_+\BBB(\tilde{x}) E_{\rm loc}(\tilde{x}_{+})\,+\, \SSS w_-\BBB(\tilde{x})  E_{\rm loc}(\tilde{x}_{-}) &\textrm{if $\tilde{x}\neq x$,}\\
\SSS w_+\EEE(\tilde{x}) E_{\rm loc}(\tilde{x}_{+})\,+\, \SSS w_-\BBB(\tilde{x}) E_{\rm loc}(\tilde{x}_{-}), &\textrm{if $\tilde{x}=x$}
  \end{cases}
\end{equation}
with weights $w_{\pm}(\tilde{x})\in\FFF \{1/2,1\}$ \EEE
given by
\begin{equation}  \label{weigths_above}
w_{\pm}(\tilde{x}):=\begin{cases}
1&\textrm{if  $x_{\pm}\not\in\PPP D_n\cap\partial\mathcal{L}_{FS}$ or \EEE $\tilde{x}_{\pm}\neq \widetilde{(x_{\pm})}_{\mp}$,}\\
\frac12 &\textrm{if  $x_{\pm}\in\PPP D_n\cap\partial\mathcal{L}_{FS}$ and \EEE$\tilde{x}_{\pm}= \widetilde{(x_{\pm})}_{\mp}$}.
  \end{cases}
\end{equation}

  \subsection{\PPP Almost-connected configurations} \label{sec:almost_connected} \PPP 
  
  In this subsection we fix $x_F:=x_F^0$ and we introduce a weaker notion of  connectedness of configurations valid for the models $\mathcal{M}^0_{\Lambda}(z)$ and $\mathcal{M}^1_{\Lambda}(z)$, which is needed  to treat the situation when the wall parameter $q$ is not unitary, and so the energy is not invariant with respect to all horizontal translations of configurations. As we refer only to $\mathcal{M}^0_{\Lambda}(z)$ and $\mathcal{M}^1_{\Lambda}(z)$, we avoid in the section the dependence on the vectors $\Lambda\in (\Rz^+)^3$ and $z\in Z_S$. 
   
   We recall from Section  \ref{sec:lattice_configurations} that  a configuration $D_n$ is said to be connected  if every $x$ and $y$ in $D_n$ are connected through a path in the bonding graph of $D_n$, i.e., there exist $\ell\leq n$ and $x_k\in D_n$ for $k:=1,\dots,\ell$ such that $|x_k-x_{k-1}|=1$, $x_1=x$, and $x_\ell=y$, and we refer to maximal bonding subgraphs of $D_n$ connected through a path as \emph{connected components} of $D_n$. 

We say that a configuration $D_n$ is \emph{almost connected} if it is connected when $q=1$, and, if \FFF there exists an  enumeration of its $k:=k_{D_n}$ connected components, say   $D_n^i$, $i=1,\dots,k$, such that \FFF each \EEE $D_n^{i}$ is separated by at most $q$  from $\cup_{l=1}^{i-1}D_n^l$  for every $i=2,\dots,n$,  when $q\neq1$. 
We say that a family of connected components of $D_n$ form an \emph{almost-connected component} of $D_n$ if their union is almost connected and, if $q\neq1$, it is distant from all other components of $D_n$ by more than $q$. 

\PPP
\begin{definition}\label{transformation} 
Given a configuration $D_n\in\mathcal{C}_n$, 
we define the \emph{transformed configuration} $\mathcal{T}(D_n)\in\mathcal{C}_n$ of $D_n$ as 
$$
\mathcal{T}(D_n):=\mathcal{T}_2(\mathcal{T}_1(D_n)),
$$ 
where $\mathcal{T}_1(D_n)$ is the configuration resulting by iterating the following procedure, starting from $D_n$:
 \begin{itemize}
 \item[-] If there are connected components without any activated bond with an atom of $\partial\mathcal{L}_S$, then select one of those components with lowest distance from $\partial\mathcal{L}_S$; 
 \item[-] Translate the component selected at the previous step of a vector in direction $-\ttwo$ till either a bond with another connected component or with the substrate is activated. 
 \end{itemize}
 (notice that the procedure ends when all connected components of $\mathcal{T}_1(D_n)$ have at least a bond with  $\partial\mathcal{L}_S$), and  $\mathcal{T}_2(\mathcal{T}_1(D_n))$ is the configuration resulting by iterating the following procedure, starting from  $\mathcal{T}_1(D_n)$: 
  \begin{itemize}
   \item[-] If there are more than one almost-connected component, then select the almost-connected component whose  leftmost bond with  $\partial\mathcal{L}_S$ is the second (when compared with \FFF the \EEE other almost-connected components) starting from the left;
       \item[-] Translate the  almost-connected component selected at the previous step of a vector  $-kq\tone$ for some $k \in \mathbf{N}$ till, if $q=1$,  a bond with another connected component is activated, or, if $q\neq1$, the distance with another almost-connected component is  less or equal to $q$; 

  \end{itemize} 
 (notice that the procedure ends when $\mathcal{T}_2(D_n)$ is almost connected). 
\end{definition}

\noindent We notice that the transformed configuration $\mathcal{T}(D_n)$ of a configuration $D_n\in\mathcal{C}_n$ satisfies the following properties: 
 \begin{itemize}
 \item[(i)] $\mathcal{T}(D_n)$ is almost connected;
  \item[(ii)]  Each  connected component of $\mathcal{T}(D_n)$ includes at least an atom bonded to $\partial\mathcal{L}_S$;
  \item[(iii)] $V_n(\mathcal{T}(D_n))\leq V_n(D_n)$ (as no active bond of $D_n$ is deactivated by performing the transformations $\mathcal{T}_1$ and $\mathcal{T}_2$);
  \end{itemize}
  and,  if $D_n$ is a minimizer of $V_n$ in $\mathcal{C}_n$, then 
   \begin{itemize}
 \item[(iv)] $\mathcal{T}_1(D_n)\SSS = \BBB D_n$;
  \item[(v)] $\mathcal{T}$ consists of translations of the almost-connected components of $D_n$ with respect to a vector (depending on the component)  in the direction $-\tone$ with norm  in $\Nz\cup\{0\}$. 
  \end{itemize}
  
  \FFF Finally we also observe that the definitions of $\mathcal{T}_1$, $\mathcal{T}_2$, and $\mathcal{T}$ are independent from $n$. 

\EEE

  \subsection{\PPP Continuum setting} \label{sec:continuum_model} \EEE
We define  the \emph{anisotropic surface tension} $\Gamma:\mathbb{S}^{1} \to \mathbb{R}$ as the function for which
\begin{equation} \label{gamma}
\Gamma (\nu(\varphi))=2 c_F\left(\nu_2(\varphi)-\frac{\nu_1(\varphi)}{\sqrt{3}}\right)
  \end{equation} 
 for every
$$
\nu(\varphi)=\left(\begin{array}{c} -\sin \varphi \\ \cos \varphi   \end{array} \right)\in\mathbb{S}^{1}\quad\text{with}\quad  \varphi \in \left[0, \frac{\pi}{3}\right],
$$
such that  $\Gamma\circ\nu$ is extended \EEE periodically on $\Rz$ as a  $\pi/3$-periodic function. Notice  that  $\SSS\Gamma (\pm\tthree)=2c_F\BBB$ and that by extending $\Gamma$ by homogeneity we obtain a convex function, and in particular  a  Finsler norm on $\Rz^2$. \EEE


\PPP For every set of finite perimeter $D\subset\Rz^2\setminus S$ we formally define its anisotropic surface energy $\mathcal{E}$ by \EEE
\begin{equation} \label{continuum}
\mathcal{E}_{\sigma}(D):= \int_{\SSS \partial^* D \backslash \partial S \BBB} \Gamma (\nu_D) d \mathcal{H}^1+\sigma\mathcal{H}^1(\partial^*D \cap \partial S)
\end{equation}
\PPP where $\partial^* D$ denotes the reduced boundary of $D$ and $\sigma\in\Rz$ is a parameter representing the adhesivity. Notice that in passing from the discrete to the continuum setting  $\sigma$ will be characterized in terms of the parameters of the discrete settings with conditions in particular entailing the lower semicontinuity of $\mathcal{E}_{\sigma}$.


  \subsection{\PPP Main results} \label{sec: main_result} \EEE

 In this section the main theorems proven in the manuscript are stated.  Let us classify all the models  $\mathcal{M}_{\Lambda}(x_F,z)$ of type \eqref{models} for 
interaction vectors $\Lambda:=(e_{FS},c_F,c_S)\in (\Rz^+)^3$, film-lattice centers $x_F\in\Rz^2\setminus\overline{S}$,  and wall vectors $z\in Z_S^0$ such that \eqref{firstinterface} holds (see Figure \ref{fig:mainmodel}), in four categories, namely  by saying that $\mathcal{M}_{\Lambda}(x_F,z)\in C_i$ with the classes of models $C_i:=C_i(\Lambda,x_F,z)$ given for $i=1,\dots,4$ by: 
\begin{figure}
\includegraphics[width=0.68\textwidth]{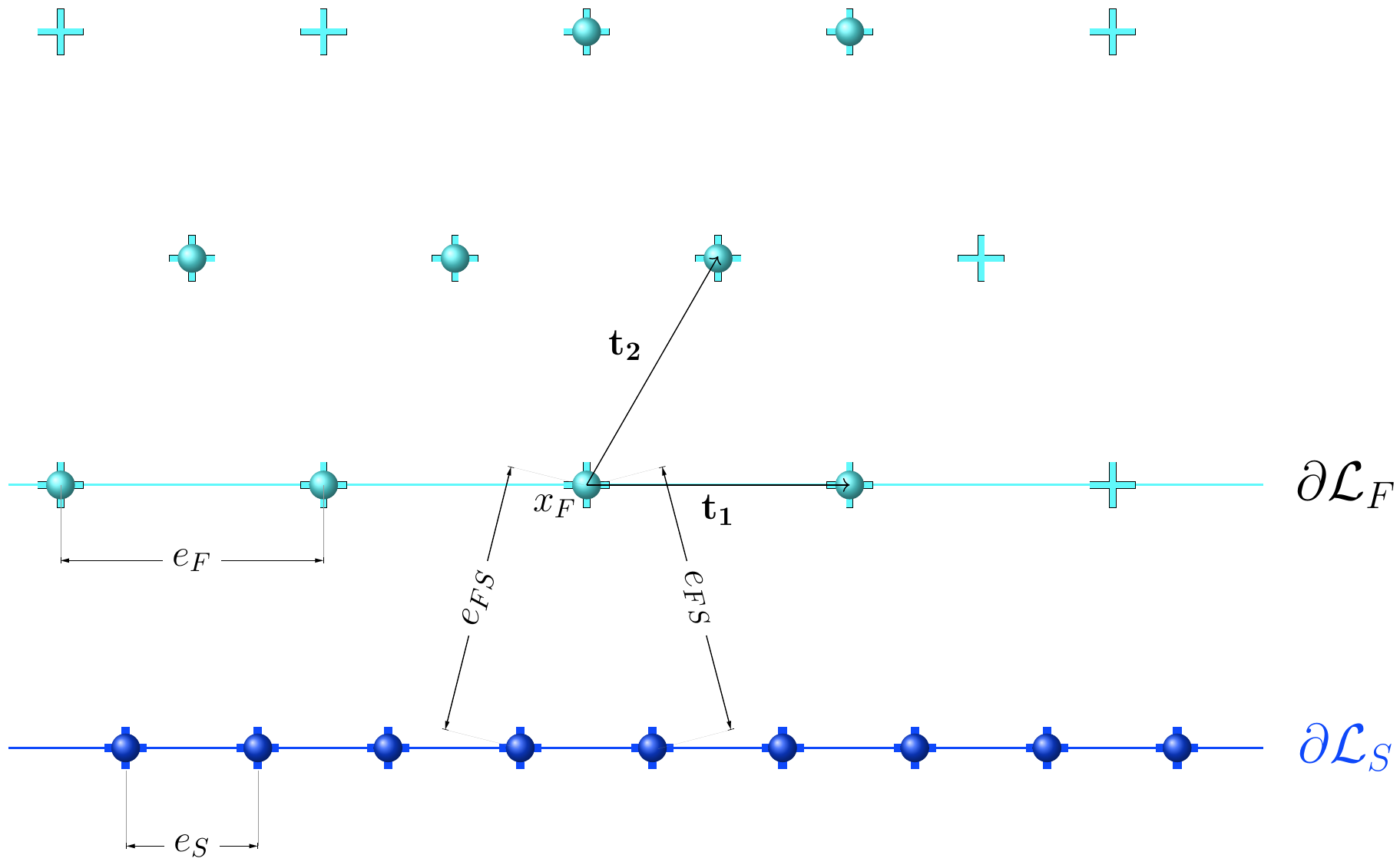}
\caption{\PPP Example of a model  $\mathcal{M}_{\Lambda}(x_F,z)$ of type \eqref{models} with 
wall vectors $z=(2,1,0)\in Z_S^0$, which is in particular in the class $C_1$.}
\label{fig:mainmodel}
\end{figure}

\begin{itemize}
\item[$C_1$:] models with $x_F$ having two substrate neighbors;
\item[$C_2$:] models with $x_F$ having exactly one substrate neighbor and 
$$(x_F,x_F+q \tone)\cap \partial \mathcal{L}_{FS}(x_F,z)=\emptyset;$$
\item[$C_3$:] models with $x_F$ having exactly one substrate neighbor and 
 $$(x_F, x_F+q\tone)\cap \partial \mathcal{L}_{FS}(x_F,z)=\{x_F+\frac{q}{2} \tone\}$$
 (which  implies that $q$ is even);
\item[$C_4$:] model with $x_F$ having exactly one substrate neighbor,  $q>2$, and for which there exists $s \in \{1,\dots, q-1\}\setminus \{q/2\}$ such that 
$$(x_F, x_F+q\tone)\cap \partial \mathcal{L}_{FS}(x_F, z) =\{x_F+s\tone\}.$$
\end{itemize}

Notice that by the proof of Proposition  \ref{equivalence_Z1} in particular follows that for every  model $\mathcal{M}_{\Lambda}(x_F,z)$ with $z\in Z_S^0$ there exists a unique $i_0\in\{1,\dots,4\}$ such that $\mathcal{M}_{\Lambda}(x_F,z)\in C_{i_0}$, and that   any model $\mathcal{M}_{\Lambda}(x_F,z)\in C_i$ for $i\in\{1,\dots,4\}$ is such  that  $\partial \mathcal{L}_{FS}:=\{x_F+k : k\in L_i\}$ with 
$$ 
L_i:=\begin{cases}
q\Zz &\text{if $i=1,2$,}\\
\frac{q}{2}\Zz &\text{if $i=3$,}\\
q\Zz\cup (q\Zz+s) &\text{if $i=4$.}\\
\end{cases}
$$


\PPP We begin with \SSS the \BBB following result that characterizes  the wetting regime in terms of a condition only depending on $v_{FF}$ and $v_{FS}$, and  the  minimizers in such regime, which we denote as \emph{wetting configurations}. More precisely, we say that a configuration $D^{\rm w}_n:=\{w_1,\dots,w_n\}\in\mathcal{C}_n(x_F)$ is a \emph{wetting configuration} 
\begin{itemize}
\item[-] if $D^{\rm w}_n\subset\partial\mathcal{L}_{FS}(x_F,z)$ when either $\mathcal{M}_{\Lambda}(x_F,z)\in C_i$ for $i=1,2$ and $q>1$, or $\mathcal{M}_{\Lambda}(x_F,z)\in C_3$ and $q>2$, or $\mathcal{M}_{\Lambda}(x_F,z)\in C_4$ and $1<s<q-1$;
\item[-] if $D^{\rm w}_n\subset\partial\mathcal{L}_{FS}(x_F,z)$ and  
\begin{equation}\label{connected_wetting}
w_{i+1}:=w_i+\tone
\end{equation}
for every $i=1,\dots,{n}$, when either $\mathcal{M}_{\Lambda}(x_F,z)\in C_i$ for $i=1,2$ and $q=1$ or $\mathcal{M}_{\Lambda}(x_F,z)\in C_3$ and $q=2$;
\item[-] if $D^{\rm w}_n\subset\partial\mathcal{L}_{FS}(x_F,z)$ and for every $i=1,\dots,{n}$ up to one index in the case $n$ is odd we have that either $w_i+\tone$ or $w_i-\tone$ belongs to $D^{\rm w}_n$ as well, when $\mathcal{M}_{\Lambda}(x_F,z)\in C_4$ and $s=1$ or $s=q-1$. 
\end{itemize}

 \PPP
 \begin{theorem}[Wetting regime]\label{wetting_theorem}
For every $n\in\Nz$ any  wetting configuration $D^{\rm w}_n\in\mathcal{C}_n(x_F)$ satisfies \EEE the following two assertions: 
\begin{itemize} 
	\item[(i)] $V_n(D^{\rm w}_n)=\min{V_n(D_n)}$,
	\item[(ii)] \PPP $V_n(D^{\rm w}_n)<V_n(D_n)$ \EEE for  \PPP every crystalline configuration $D_n\in\mathcal{C}_n(x_F)$ that is not a wetting configuration,
\end{itemize}
if and only if 
\begin{equation}\label{wetting_condition_total} 
\hspace{-0.2cm}\begin{cases}
c_{S}\geq 6c_{F} & \text{if either $\mathcal{M}_{\Lambda}(x_F,z)\in C_i$ for $i=1,2$ and $q>1$, or $\mathcal{M}_{\Lambda}(x_F,z)\in C_3$}\\ 
& \text{and $q>2$, or $\mathcal{M}_{\Lambda}(x_F,z)\in C_4$ and $1<s<q-1$},\\
c_{S}\geq 5c_{F} & \text{if $\mathcal{M}_{\Lambda}(x_F,z)\in C_4$ and $s=1$ or $s=q-1$}, \\
c_{S}\geq 4c_{F} & \text{if either $\mathcal{M}_{\Lambda}(x_F,z)\in C_i$ for $i=1,2$ and $q=1$, or $\mathcal{M}_{\Lambda}(x_F,z)\in C_3$}\\  
& \text{and $q=2$}. 
\end{cases}
\end{equation}
\FFF In particular, for the necessity of \eqref{wetting_condition_total} it is enough assertion (i), and more specifically that 
there exists an increasing subsequence $(n_k)_{k \in \mathbf{N}}$ such that $V_{n_k}(D^{\rm w}_{n_k})=\min{V_{n_k}(D_{n_k})}$ holds for every $n_k$. \EEE	
\end{theorem}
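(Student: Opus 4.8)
The plan is to reduce everything to a bond count and to compare any admissible configuration against the explicit wetting value. Throughout I use the identity
$$V_n(D_n)=-2c_F\,b_F(D_n)-c_S\,b_S(D_n)=\sum_{x\in D_n}E_{\rm def}(x)-6nc_F,$$
where $b_F$ and $b_S$ are the numbers of activated film and substrate bonds, so that minimizing $V_n$ is equivalent to minimizing the total deficiency $\sum_x E_{\rm def}(x)$. A first observation is that all wetting configurations of a given class share the same pair $(b_F,b_S)$, hence the same energy, so that assertion (i) is meaningful. The next step is to invoke the equivalence relation of Definition \ref{equivalent_model}: by (the proof of) Proposition \ref{equivalence_Z1} every model in $C_1,\dots,C_4$ is equivalent to one of the two base models $\mathcal M^0_\Lambda(z)$ or $\mathcal M^1_\Lambda(z)$ with $x_F=x_F^0$, the associated configurations match energies, wetting configurations correspond to wetting configurations, and the threshold \eqref{wetting_condition_total} transforms into the threshold of the base model. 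It therefore suffices to prove the theorem for the base models $\mathcal M^0$ and $\mathcal M^1$, for which the wetting layer sits in $\partial\mathcal L_{FS}(x_F^0,z)$ and carries exactly one substrate bond per atom.

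For the \emph{necessity} I argue by contraposition, and here it is enough to exhibit a single competitor beating the wetting energy for infinitely many $n$, so as to contradict (i) along any subsequence. Suppose the relevant line of \eqref{wetting_condition_total} fails. For $n$ ranging over a subsequence of nearly regular island sizes I choose a connected $D_n\subset\mathcal L_F$ shaped as a large Wulff-type cluster resting on $\partial\mathcal L_S$; by the Heitmann--Radin isoperimetric bound such a cluster has $b_F=3n-O(\sqrt n)$ while touching the wall in only $O(\sqrt n)$ sites, whence $V_n(D_n)=-6c_Fn+O(\sqrt n)$, i.e.\ per-atom energy tending to $-6c_F$. On the other hand a wetting configuration of the base model is affine in $n$ with per-atom slope $-\bigl(c_S+2c_F f\bigr)$, where $f\in\{0,\tfrac12,1\}$ is the number of in-layer film bonds per atom, corresponding respectively to the isolated, paired, and consecutive structures of $\partial\mathcal L_{FS}$ that distinguish the three cases of \eqref{wetting_condition_total}. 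The break-even identity $c_S+2c_Ff=6c_F$ yields exactly the constants $6$, $5$, $4$, so that a violated threshold makes the island strictly cheaper than every wetting configuration for all large $n$ in the subsequence, contradicting (i); this proves necessity and at the same time explains the three thresholds.

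For the \emph{sufficiency} I assume \eqref{wetting_condition_total} and prove $V_n(D_n)\ge V_n(D^{\rm w}_n)$ with equality only for wetting configurations, which gives both (i) and (ii). The scheme is a localization of the deficiency. Away from the substrate I bound the film part from below by the triangular-lattice isoperimetric inequality in the sharpened per-atom form $b_F\le 3\,\#\{x\in D_n:\ x\text{ has no substrate bond}\}$, reorganized through $E_{\rm loc}$ in \eqref{Eloc} and $E_{\rm def}$ in \eqref{deficiency}. At the film--substrate interface I instead assign to each substrate-bonded site its strip energy $E_{\rm strip}$ of \eqref{Estrip}, whose weights \eqref{Eweight_below1}--\eqref{weigths_above} are precisely calibrated so that, when summed over strip centers, every film and substrate bond is counted with total weight one and no bond shared by adjacent strips is double counted. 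Under the threshold one then checks, case by case on the class and on the weight pattern, that each strip energy is at least its value on the wetting layer, with strict inequality as soon as the strip deviates from the wetting pattern; summing yields $\sum_x E_{\rm def}(x)\ge\sum_x E_{\rm def}^{\rm w}(x)$ and hence (i), while the equality analysis forces $D_n$ to lie in $\partial\mathcal L_{FS}$ with the prescribed connectivity, giving (ii). The main obstacle is this sufficiency step for the auxiliary model $\mathcal M^1$ with paired substrate bonds (the $5c_F$ threshold): there the strip must straddle two substrate-bonded atoms, the lower weights in \eqref{Eweight_below3} and \eqref{weigths_above} are essential to avoid overcounting the shared film bond, and the strict-inequality bookkeeping needed for uniqueness is considerably more delicate than in the cases reducing to \cite{PiVe1}.
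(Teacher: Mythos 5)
Your reduction to the two base models via Proposition \ref{equivalence_Z1} and your necessity argument (Wulff-type clusters of energy $-6c_Fn+O(\sqrt n)$ compared against the affine wetting energy, contradicting assertion (i) along any subsequence) are exactly the paper's route; the only caveat there is class $C_1$, where the equivalence rescales $c_S$ into $2c_S$, so your claim that ``the threshold transforms into the threshold of the base model'' is not automatic and needs to be checked against \eqref{wetting_condition_total} (the paper's own proof glosses this point as well). The genuine divergence, and the gap, is in the sufficiency step. The paper does not use the strip machinery there: for the base models it proves $V_n(D_n)>V_n(D^{\rm w}_n)$ by induction on $n$, removing the topmost occupied lattice line $L$ and estimating $V_n(D_n)\geq V_{n-\ell}(D_n\setminus L)-6c_F(\ell-1)-4c_F$ (see Proposition \ref{wetting_regime3} and the corresponding results of \cite{PiVe1}); the strips of Section \ref{local_energy} are reserved for the dewetting compactness bound, where an error of order $\#\partial D_n$ is harmless.

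That choice is not cosmetic: your strip-based sufficiency cannot close, precisely in the two cases you flag as delicate. Take the chain case ($q=1$, threshold $4c_F$, weights $(\tfrac12,\tfrac14,\tfrac14)$ from \eqref{Eweight_below2}) and let $x$ be an end atom of the wetting chain: then $E_{\rm loc}(x)=5c_F$, $E_{\rm loc}(x_-)=4c_F$, $E_{\rm loc}(x_+)=0$, $\tilde x=x$, so $E_{\rm strip}(x)=\tfrac12\cdot 5c_F+\tfrac14\cdot 4c_F-c_S=\tfrac72 c_F-c_S$, which is strictly below the per-atom wetting deficiency $4c_F-c_S$ (the value of $\Delta_{\rm strip}$ for this case in \eqref{delta_strip}); likewise a lone interface atom in the paired case has $E_{\rm strip}=3c_F-c_S<5c_F-c_S$. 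So ``each strip energy is at least its value on the wetting layer'' is false at pattern ends. Worse, the decomposition \eqref{energy_strip} is strictly lossy on the wetting configuration itself: summing the strip energies of the wetting chain gives $n(4c_F-c_S)-\tfrac12 c_F$ (for $n\geq 4$), whereas $V_n(D^{\rm w}_n)+6c_Fn=n(4c_F-c_S)+2c_F$. Hence the intermediate inequality your scheme needs, namely that the strip sum plus the away sum dominates $V_n(D^{\rm w}_n)+6c_Fn$, fails already for $D_n=D^{\rm w}_n$. Since (i) and (ii) are exact statements at every finite $n$, and non-wetting competitors can be within $O(c_F)$ of the wetting energy (two disjoint chains exceed $V_n(D^{\rm w}_n)$ by exactly $2c_F$), the $O(c_F)$ slack that any admissible weight calibration must discard at chain or pair ends is fatal: no choice of weights in \eqref{Eweight_below1}--\eqref{weigths_above} can simultaneously avoid overcounting and be exact at the ends. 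Sufficiency (and the uniqueness statement (ii)) requires a lossless argument, and the line-removal induction used in the paper is the natural one.
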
 

We refer to  \eqref{wetting_condition_total} as a \emph{wetting condition} or as the \emph{wetting regime}, and to the opposite condition, namely 
\begin{equation}\label{dewetting_condition} 
\begin{cases}
c_{S}< 6c_{F} & \text{if either $\mathcal{M}_{\Lambda}(x_F,z)\in C_i$ for $i=1,2$ and $q>1$, or $\mathcal{M}_{\Lambda}(x_F,z)\in C_3$}\\ 
& \text{and $q>2$, or $\mathcal{M}_{\Lambda}(x_F,z)\in C_4$ and $1<s<q-1$},\\
c_{S}< 5c_{F} & \text{if $\mathcal{M}_{\Lambda}(x_F,z)\in C_4$ and $s=1$ or $s=q-1$}, \\
c_{S}< 4c_{F} & \text{if either $\mathcal{M}_{\Lambda}(x_F,z)\in C_i$ for $i=1,2$ and $q=1$, or $\mathcal{M}_{\Lambda}(x_F,z)\in C_3$}\\  
& \text{and $q=2$,}
\end{cases}
\end{equation}
as the \emph{dewetting condition} or the \emph{dewetting regime}. \EEE 
\PPP The following result shows that connected components with the largest cardinality of minimizers incorporate the whole mass in the limit. \EEE

\begin{theorem}[Mass conservation] \label{connectness} 
\PPP Assume \eqref{dewetting_condition}. If $ \widehat{D}_n$ are  minimizers of $V_{n,\Lambda}$ among all crystalline configurations in $\mathcal{C}_n$, i.e.,  
$$V_{n,\Lambda}(\widehat{D}_n)=\min_{D_n\in\mathcal{C}_n}{V_{n,\Lambda}(D_n)},$$
and we select for every $\widehat{D}_n$ a connected component $\widehat{D}_{n,1}\subset \widehat{D}_n$ with  largest cardinality, then
$$
\lim_{n \to \infty}{\mu_{\widehat{D}_n} (\widehat{D}_{n,1})}=1, 
$$
where $\mu_{\widehat{D}_n}$ are the empirical measure associated to $\widehat{D}_n$ defined by \eqref{empiricalmeasures}. \EEE
\end{theorem}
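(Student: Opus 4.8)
The plan is to exploit the strict superadditivity of the surface energy under fragmentation, which in the dewetting regime carries a definite positive constant. Write $m_j:=\#\widehat{D}_{n,j}$ for the cardinalities of the connected components of a minimizer $\widehat{D}_n$, ordered so that $m_1=\max_j m_j=\#\widehat{D}_{n,1}$, and observe that by \eqref{empiricalmeasures} the quantity to be estimated is simply $\mu_{\widehat{D}_n}(\widehat{D}_{n,1})=m_1/n$. The starting point is that distinct connected components share no film bond, while the substrate contribution $E_S=\sum_i v^1(x_i)$ in \eqref{substrateenergy} is a sum of one-body terms; hence the energy is additive over connected components,
$$V_{n,\Lambda}(\widehat{D}_n)=\sum_j V_{m_j,\Lambda}(\widehat{D}_{n,j}).$$
I would then combine a \emph{sharp} per-component lower bound with a \emph{matching} single-island upper bound for $e_n:=\min_{\mathcal{C}_n}V_{n,\Lambda}$, and conclude by an elementary convexity estimate.

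For the lower bound I would first normalize the minimizer. Since $\widehat{D}_n$ is a minimizer, properties (ii)--(v) of the transformation $\mathcal{T}$ give $\mathcal{T}_1(\widehat{D}_n)=\widehat{D}_n$ and show that $\mathcal{T}$ acts only by horizontal translations of the almost-connected components; in particular every connected component already carries at least one substrate bond, and the multiset $\{m_j\}$ is unchanged. I can therefore apply to each component the surface lower bound furnished by the strip/local-energy decomposition of Section \ref{local_energy}, established for the auxiliary model in Section \ref{sec:M1}: there is a constant $\gamma>0$, equal to the Winterbottom surface constant of the model, such that every connected configuration $D$ of $m$ atoms resting on the substrate satisfies $V_{m,\Lambda}(D)\geq -6c_F\,m+\gamma\sqrt{m}$. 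The \emph{dewetting condition} \eqref{dewetting_condition} is exactly what forces $\gamma>0$: it rules out the flat wetting layer being competitive, so each island pays a strictly positive perimeter cost (the favorable substrate term cannot cancel the film surface term). Together with additivity this yields $e_n\geq -6c_F n+\gamma\sum_j\sqrt{m_j}$.

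For the matching upper bound I would test $e_n$ against a single explicit competitor, a discrete approximation of the scaled Winterbottom shape $\sqrt{n/\rho}\,W_{\Gamma,\sigma}$ placed on $\partial\mathcal{L}_{FS}$, whose energy is $-6c_F n+\gamma\sqrt{n}+o(\sqrt{n})$ with the \emph{same} constant $\gamma$ as above (this amounts to attaining equality, up to lower order, in the strip estimate). Matching the two constants is the decisive point and the main obstacle of the whole argument: the superadditivity gap closes only because the lower bound is sharp, which is precisely why the delicate strip-energy bookkeeping of Section \ref{sec:M1} is needed. A non-sharp lower bound would only yield that a fixed positive fraction of the mass lies in the largest component, not the full fraction.

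Combining the two bounds gives $\gamma\sum_j\sqrt{m_j}\leq\gamma\sqrt{n}+o(\sqrt{n})$, while subadditivity of the square root gives $\sum_j\sqrt{m_j}\geq\sqrt{n}$; hence $\sqrt{n}\leq\sum_j\sqrt{m_j}\leq\sqrt{n}\,(1+o(1))$. Writing $\theta_n:=m_1/n$ and using $m_j\leq m_1$, I estimate
$$\sum_j\sqrt{m_j}\geq\sum_j\frac{m_j}{\sqrt{m_1}}=\frac{n}{\sqrt{m_1}}=\frac{\sqrt{n}}{\sqrt{\theta_n}},$$
so $\theta_n^{-1/2}\leq 1+o(1)$, whence $\theta_n\to1$, i.e.\ $\mu_{\widehat{D}_n}(\widehat{D}_{n,1})=m_1/n\to1$, which is the assertion. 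For $q=1$ the statement is in fact immediate: then any two distinct components could be merged by a horizontal translation through $\mathcal{T}_2$, activating a new bond and strictly lowering the energy, contradicting minimality, so minimizers are connected; the genuine content is the case $q\neq1$, handled as above.
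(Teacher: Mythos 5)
The decisive step of your argument --- the per-component lower bound $V_{m,\Lambda}(D)\geq -6c_F\,m+\gamma\sqrt{m}$ with $\gamma$ \emph{equal to the sharp Winterbottom constant}, i.e.\ the same constant appearing in your single-island upper bound --- is not furnished by the strip/local-energy decomposition, and this is a genuine gap. What Lemma \ref{strip_local} and the ensuing strip lemma actually give is \eqref{Vn_lower_bound}, namely $V_n(D_n)\geq -6c_Fn+\Delta\,\#\partial D_n$ with a crude constant $\Delta$ (of the form $\min\{\Delta_{\rm strip}/6,\,c_F\}$); combined with discrete isoperimetry this produces a bound $-6c_Fm+\Delta'\sqrt{m}$ whose constant $\Delta'$ has no reason to coincide with, and is in general strictly below, the Winterbottom constant $\gamma:=\min_{\mathcal{M}_W}I_{\infty,i}$, which depends on the full anisotropy $\Gamma$ and on $\sigma_i$ and cannot be read off the strip weights; in particular the Winterbottom competitor does \emph{not} saturate the strip estimate, contrary to your parenthetical claim. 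By your own admission, a non-sharp constant only yields $\liminf\theta_n\geq(\Delta'/\gamma)^2<1$. The only available route to the sharp per-component bound is the $\Gamma$-liminf inequality combined with compactness for (almost-)connected configurations and the existence of a minimizer of the limit functional --- that is, Assertions 1--2 of Theorem \ref{thm:convergence_minimizers_Z1} together with Theorem \ref{compactnesstheorem}, and \cite[Theorem 2.4]{PiVe1} for the classes reducible to $\mathcal{M}^0$. This is exactly the machinery the paper's proof of Theorems \ref{connectness} and \ref{connectness_Z1} invokes; once you import it, your $\sqrt{m_j}$-subadditivity computation becomes a discrete rephrasing of the paper's contradiction argument, which is run at the continuum level: a sequence of minimizers with a mass-carrying disconnected part would produce a \emph{disconnected} minimizer of $I_{\infty,i}$, impossible by scaling, since $\mathcal{E}_{\sigma}(\lambda D)=\lambda\,\mathcal{E}_{\sigma}(D)$ and $\sqrt{v_1+v_2}<\sqrt{v_1}+\sqrt{v_2}$. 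So the skeleton is right, but the load-bearing ingredient is misattributed, and as written the proof does not close.

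Two further, more minor but real, issues. First, even granting sharp asymptotics, a bound $V_{m,\Lambda}(D)\geq-6c_Fm+\gamma\sqrt m$ with no error term and valid for \emph{all} $m$ is too strong; what one can hope for is that for every $\eps>0$ there exists $M_\eps$ such that every connected, substrate-touching $D$ with $m\geq M_\eps$ atoms satisfies $V_{m,\Lambda}(D)\geq -6c_Fm+(\gamma-\eps)\sqrt m$. Your computation then needs a two-tier version: components with $m_j<M_\eps$ must be handled by the crude bound \eqref{Vn_lower_bound} (which shows their total mass fraction vanishes, because otherwise their deficit would be of order $n$ rather than $\sqrt n$), after which the remaining components give $\liminf\theta_n\geq((\gamma-\eps)/\gamma)^2$ for every $\eps>0$. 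Second, Theorem \ref{connectness} concerns all classes $C_1,\dots,C_4$ under \eqref{dewetting_condition}, whereas the strip apparatus of Section \ref{sec:M1} is set up only for the auxiliary models $\mathcal{M}^1_{\Lambda}(z)$, i.e.\ class $C_4$; for $C_1$--$C_3$ the paper first passes through the model equivalence of Proposition \ref{equivalence_Z1} to land in the setting of \cite{PiVe1}, a reduction your proof never performs. Your closing remark for $q=1$ (minimizers are connected, since translating one component toward another activates a bond before any overlap can occur and strictly lowers the energy) is correct.
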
	

\PPP We rigorously prove by $\Gamma$-convergence that the discrete models converge to the continuum model, and in view of the previous result (even in the lack of a direct compactness result for general sequences of minimizers, possibly not almost connected), we prove convergence (up to passing to a subsequence and up to translations) of the minimizers of the discrete models to a bounded minimizer of the continuum model, which in \SSS turn \BBB  is also proven to exist. 

\begin{theorem}[Convergence of Minimizers]\label{thm:convergence_minimizers}
	\PPP Assume \eqref{dewetting_condition}. The following statements hold:
	\begin{itemize}
	\item[1.] The functional 
	\begin{equation}\label{converging_energies00}
	E_{n,\Lambda,x_F}:= n^{-1/2}(I_{n,\Lambda,x_F}+6c_F n),
	\end{equation}
	where $I_{n,\Lambda,x_F}$ is defined by \eqref{radon_functional}, $\Gamma$-converges with respect to the weak* convergence of measures to the functional $I_{\infty,i}$ defined by
		\begin{equation}
	\hspace{0.5cm}I_{\infty,i}(\mu):=\begin{cases}  \mathcal{E}_{\sigma_i}(D_\mu), &  \text{if \FFF there exists \EEE $D_\mu\subset\Rz^2\setminus S$  set of finite perimeter}\\
	&\hspace{0ex} \text{with  $|D_\mu|=1/\rho$ such that $\mu=\rho\chi_{D_\mu}$,}
\\
	+\infty, &\text{otherwise,}\\
	\end{cases}
	\end{equation}
	for every $\mu\in\mathcal{M}(\Rz^2)$,
where $\rho:=2/\sqrt{3}$ and $\mathcal{E}_{\sigma_i}$ is defined in \eqref{continuum} for an adhesivity $\sigma:=\sigma_i$ given for $i=1,\dots,4$ by
\begin{equation}\label{sigma}
\sigma_{i}:=\begin{cases}
2c_F-\displaystyle\frac{2c_S}{q} & \text{if $i=1,3,4$ and $\mathcal{M}_{\Lambda}(x_F,z)\in C_i$,}\\ 
&\\
2c_F-\displaystyle\frac{c_S}{q}  & \text{if $i=2$ and $\mathcal{M}_{\Lambda}(x_F,z)\in C_2$.}\\
   \end{cases}
\end{equation}
	\item[2.]  The functional $I_{\infty,i}$ admits a minimizer in 
	\begin{align}
\mathcal{M}_W:=\bigg\{\mu\in\mathcal{M}(\Rz^2)\ :\ \text{$\exists$ $D\subset\Rz^2\setminus S$} & \hspace{2ex} \text{set of finite  perimeter, bounded }\notag\\
\hspace{8ex} \text{with }  |D|=\frac{1}{\rho},\label{M_Wulff}	& \hspace{2ex} \text{ and such that $\mu=\rho\chi_D$}\bigg\}. 
	\end{align}
	\item[3.]  Every sequence $\mu_n\in\mathcal{M}_n(x_F)$ of minimizers of $E_{n,\Lambda,x_F}$ 
admits, up to translation 
a subsequence  converging  with respect to the weak* convergence of measures to a minimizer of $I_{\infty,i}$ in $\mathcal{M}_W$. 
	\end{itemize}
	\end{theorem}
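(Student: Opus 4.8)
The plan is to reduce the general statement to the two \emph{canonical} models $\mathcal{M}^0_{\Lambda}(p,q)$ and $\mathcal{M}^1_{\Lambda}(q,r)$ (both with $x_F:=x_F^0$) by means of the equivalence relation of Definition \ref{equivalent_model}, and then to establish the three assertions for these two models via the discrete-to-continuum $\Gamma$-convergence scheme of \cite{Yeung-et-al12,PiVe1}. By the proof of Proposition \ref{equivalence_Z1}, every model in a class $C_i$ with $z\in Z_S^0$ satisfying the dewetting condition is equivalent to either $\mathcal{M}^0_{\Lambda'}(p,q)$ (for $i=1,2,3$) or to the auxiliary model $\mathcal{M}^1_{\Lambda'}(q,r)$ (for $i=4$). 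Since equivalence identifies the energies of associated configurations and merely shifts empirical measures \eqref{empiricalmeasures} by the horizontal translation $x_F^a\mapsto x_F^b$, it preserves the functionals \eqref{converging_energies00}, their $\Gamma$-limits, their minimizers, and the compactness of minimizing sequences; hence all three assertions transfer from the canonical models. As all three were already proven for $\mathcal{M}^0$ in \cite{PiVe1}, the genuinely new content is the auxiliary model $\mathcal{M}^1$, whose analysis is carried out in Section \ref{sec:M1}, together with the bookkeeping identifying the correct $\sigma_i$ in \eqref{sigma} for each class.

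For the $\Gamma$-convergence of item 1 I would prove the liminf inequality and exhibit a recovery sequence separately. For the upper bound, given a limit $\mu=\rho\chi_{D_\mu}$ with $D_\mu$ bounded of finite perimeter, I would take the recovery configurations $\sqrt{n}\,D_\mu\cap\mathcal{L}_F(x_F^0)$, readjusting the atoms near the wall so that the density of activated substrate bonds along $\partial^*D_\mu\cap\partial S$ matches the effective adhesion encoded in $\sigma_i$; a direct count of missing film bonds and of substrate bonds then shows that the rescaled energy converges to $\mathcal{E}_{\sigma_i}(D_\mu)$ as in \eqref{continuum}. For the lower bound I would pass through the local and strip energies of Section \ref{local_energy}: the film-bond deficiency is converted into the anisotropic surface term through $\Gamma$ exactly as in \cite{Yeung-et-al12}, while the substrate contribution is estimated by the boundary-averaging argument over the strips, whose weights \eqref{Eweight_below1}--\eqref{Eweight_below2} and \eqref{weigths_above} are calibrated so that the averaged substrate energy per unit length of $\partial^*D_\mu\cap\partial S$ equals precisely $\sigma_i$, without ever double-counting film bonds.

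Item 2 is the existence of a minimizer of $\mathcal{E}_{\sigma_i}$ under the constraint $|D|=1/\rho$, which I would obtain by the direct method in $\mathcal{M}_W$: a minimizing sequence has equibounded anisotropic perimeter, hence equibounded Euclidean perimeter (since $\Gamma$ is a Finsler norm, bounded above and below), so by $BV$-compactness and the volume constraint it converges, up to a subsequence and a horizontal translation preserving $D\subset\Rz^2\setminus S$, to a set of finite perimeter of the correct volume; lower semicontinuity of $\mathcal{E}_{\sigma_i}$, guaranteed by convexity of $\Gamma$ together with the bound $|\sigma_i|\le\Gamma(\pm\tthree)=2c_F$ that \eqref{sigma} ensures under \eqref{dewetting_condition}, yields a minimizer, whose boundedness follows from the explicit Winterbottom construction $W_{\Gamma,\sigma_i}$. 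Item 3 then combines item 1 with compactness of discrete minimizers: I would first invoke the mass conservation of Theorem \ref{connectness} to replace a minimizer of \eqref{converging_energies00} by a largest connected component $\widehat{D}_{n,1}$ carrying all the mass in the limit, which alters neither the $\Gamma$-limit nor the limit volume; on such almost-connected configurations the transformation $\mathcal{T}$ of Definition \ref{transformation} and the strip energy furnish tightness and uniformly bounded support of the empirical measures, so that after a horizontal translation they converge weak$^*$ to some $\mu\in\mathcal{M}_W$, and the fundamental theorem of $\Gamma$-convergence identifies $\mu$ as a minimizer of $I_{\infty,i}$.

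The hard part will be the lower bound of item 1 for the auxiliary model $\mathcal{M}^1$, together with the compactness it underpins in item 3. Because $\mathcal{M}^1$ admits \emph{separated} pairs of film atoms each bonded to the substrate, a per-atom accounting of substrate energy overcounts, and one must average the substrate contribution over strips using the weights \eqref{Eweight_below1}--\eqref{weigths_above}; making this averaging simultaneously a valid lower bound and asymptotically sharp is the technical core, and is exactly what forces the two distinct expressions for $\sigma_i$ in \eqref{sigma} (the factor $c_S/q$ versus $2c_S/q$). The same non-invariance of the energy under all horizontal translations when $q\neq1$ is also the reason the strip argument, rather than a bare perimeter bound, is needed to secure compactness of the minimizing sequences in item 3.
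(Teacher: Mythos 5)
Your overall architecture coincides with the paper's own proof: reduce via Proposition \ref{equivalence_Z1} to the two canonical models (noting that \eqref{dewetting_condition} becomes the dewetting condition of the reduced model), quote \cite[Theorem 2.4]{PiVe1} for the classes $C_1,C_2,C_3$, and quote Theorem \ref{thm:convergence_minimizers_Z1} for $C_4$, whose proof in Section \ref{sec:M1} is exactly the adaptation of the lower-bound (``boundary-averaging''/strip) argument of \cite{PiVe1} to the separated pairs of substrate-bonded atoms; the observation that the equivalence shifts empirical measures by a fixed vector, which disappears under the $n^{-1/2}$ rescaling and therefore preserves $\Gamma$-limits, minimizers and compactness, is also the paper's. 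Your identification of the technical core --- calibrating the strip weights of Section \ref{local_energy} so that the averaged substrate contribution is simultaneously a valid lower bound and sharp, which is what produces $2c_S/q$ versus $c_S/q$ in \eqref{sigma} --- matches the paper, so for assertions 1 and 3 the two routes are the same.

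Assertion 2 is where you genuinely deviate, and your replacement has a gap. The paper does not use the direct method: existence of a minimizer of $I_{\infty,i}$ is obtained as a corollary of assertion 1 together with the discrete compactness result (Theorem \ref{compactnesstheorem}, resting on Proposition \ref{zadnje}, the strip bound \eqref{Vn_lower_bound}, and the transformation $\mathcal{T}$ of Definition \ref{transformation}): transformed almost-connected minimizers have tight empirical measures with uniformly bounded rescaled support, their weak* limit automatically lies in $\mathcal{M}_W$ (bounded!), and the fundamental theorem of $\Gamma$-convergence makes it a minimizer. Your direct method in $BV$ stalls precisely at compactness: $\Rz^2\setminus S$ is unbounded, and a minimizing sequence with equibounded perimeter and fixed volume can undergo dichotomy --- split into components whose mutual distance diverges --- so that no single translation of a subsequence converges to a limit of full volume $1/\rho$ (vanishing is excluded by the perimeter bound, but dichotomy is not). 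Repairing this requires a concentration-compactness or strict-subadditivity argument, essentially the scaling argument by which the paper excludes disconnected minimizers of $I_{\infty,4}$, or else simply the paper's discrete route; relatedly, your appeal to ``the explicit Winterbottom construction'' for boundedness of the minimizer presupposes a characterization of minimizers (a Winterbottom inequality) that neither you nor the paper proves, whereas boundedness is free along the paper's route. A secondary caveat: the lower-semicontinuity bound $|\sigma_i|\le 2c_F$ must be checked against the dewetting threshold of the \emph{reduced} model (for $C_1$ the substrate constant doubles under the equivalence, $c_S'=2c_S$); it is that threshold, rather than \eqref{dewetting_condition} read at face value, which guarantees $\sigma_i>-2c_F$, and this is exactly where the equivalence bookkeeping you rely on has to be done carefully.
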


\PPP
\noindent Notice that the parameter $\rho:=2/\sqrt{3}$ in the definition of $\mathcal{M}_W$ is related to the fact that we chose the triangular lattice for $\mathcal{L}_F(x_F)$, as $\rho$ is the density of atoms per unit volume of such lattice. 

Finally, we observe that as a byproduct of our strategy analogous results as Theorems \ref{wetting_theorem}, \ref{connectness}, and \ref{thm:convergence_minimizers} are also obtained for any models $\mathcal{M}^1_{\Lambda}(z)$ with $z\in Z^1_S$ and $\Lambda\in (\Rz^+)^3$, namely in Theorems \ref{wetting_theorem_Z1}, \ref{connectness_Z1}, and \ref{thm:convergence_minimizers_Z1}, respectively.

 \EEE

\section{\PPP Model $\mathcal{M}^1_{\Lambda}(z)$}\label{sec:M1}
In this section we prove analogous results of the ones contained in Section \ref{sec: main_result} for the specific model $\mathcal{M}^1_{\Lambda}(z)$  defined in Section \ref{comparison} for $z\in Z_S^1$ and $\Lambda:=(e_{FS},c_F,c_S)\in (\Rz^+)^3$, i.e., Theorems \ref{wetting_theorem_Z1}, \ref{connectness_Z1}, and \ref{thm:convergence_minimizers_Z1}, respectively. In this section for simplicity we often avoid indicating the dependence on  $x_F:=x_F^0$ and on $\Lambda\in (\Rz^+)^3$ as we only deal with models $\mathcal{M}^1_{\Lambda}(z)$.

We start  by characterizing the wetting regime for the model $\mathcal{M}^1_{\Lambda}(z)$. In the following we say that a configuration $D^{\rm w}_n:=\{w_1,\dots,w_n\}\in\mathcal{C}_n$ is a \emph{wetting configuration for} $\mathcal{M}^1_{\Lambda}(z)$ if:
\begin{itemize}
\item[-]  $D^{\rm w}_n\subset\partial\mathcal{L}_{FS}(z)$  for $z \in Z_S^1$ with $r>1$ (and so $q>2$);

\item[-] $D^{\rm w}_n:=\{w_1,\dots,w_n\}\subset\partial\mathcal{L}_{FS}(z)$ with 
\begin{equation}\label{connected_wetting1}
w_{i+1}:=w_i+\tone
\end{equation}
for every $i=1,\dots,{n}$, for $z \in Z_S^1$ with $r=1$ and $q=2$;
\item[-] $D^{\rm w}_n:=\{w_1,\dots,w_n\}\in\mathcal{L}_{FS}(z)$ such that for every $i=1,\dots,{n}$ up to one index in the case $n$ is odd we have that either $w_i+\tone$ or $w_i-\tone$ belongs to $D^{\rm w}_n$ as well, for $z \in Z_S^1$ with $r=1$ and $q>2$. 
\end{itemize}

 \begin{proposition}\label{wetting_regime1}
Let $n \in \mathbb{N}$ and $z \in Z_S^1$ with $r>1$  \emph{(}and so $q>2$\emph{)}. 
Any wetting configuration $D^{\rm w}_n\subset\partial\mathcal{L}_{FS}(z)$ \EEE 
satisfies \PPP the following two assertions: \EEE
\begin{itemize} 
\item[(i)] $V_n(D^{\rm w}_n)=\min{V_n(D_n)}$\PPP, \EEE
\item[(ii)] \PPP $V_n(D^{\rm w}_n)<V_n(D_n)$ \EEE for  \PPP any $D_n$ that is not a wetting configuration,  
\end{itemize}
 if and only if 
\begin{equation}\label{wetting_condition1}
c_{S} \geq 6 c_{F}.
\end{equation}
  \end{proposition}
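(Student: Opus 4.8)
The plan is to reduce everything to the identity $V_n(D_n) = -2c_F b - c_S m$, where $b$ is the number of film bonds of $D_n$ and $m$ is the number of atoms carrying a substrate bond, and to exploit the geometric feature special to $r>1$: since consecutive sites of $\partial\mathcal{L}_{FS}(z)$ are at horizontal distance $\min\{r,q-r\}=r\geq 2$, no two substrate-bonded atoms can be film neighbors. For a wetting configuration (for $r>1$ this is simply any $D^{\rm w}_n\subset\partial\mathcal{L}_{FS}(z)$) one then has $b=0$ and $m=n$, so $V_n(D^{\rm w}_n)=-c_S n$. Hence both assertions amount to controlling the sign of
$$V_n(D_n)-(-c_S n)=c_S(n-m)-2c_F b = c_S\,n_0 - 2c_F b,$$
where $n_0:=n-m$, and the equivalence with \eqref{wetting_condition1} will come from the sharp relation between $b$ and $n_0$.

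For the necessity I would argue by contradiction: assuming $c_S<6c_F$, I pick for each large $n$ a connected competitor realizing the near-optimal bond count $b\geq 3n-C\sqrt n$ (a near-hexagonal Wulff cluster placed in $\Rz^2\setminus\overline S$, possibly not even touching the wall). Since $E_S\leq 0$, this gives $V_n(D_n)\leq -2c_F b\leq -6c_F n+2Cc_F\sqrt n$, which is strictly below $-c_S n=V_n(D^{\rm w}_n)$ as soon as $(6c_F-c_S)\sqrt n>2Cc_F$, i.e.\ for all large $n$. This already contradicts minimality of wetting configurations along \emph{any} subsequence $n_k\to\infty$, which yields both the necessity of \eqref{wetting_condition1} and the final subsequential refinement stated in the proposition.

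The sufficiency is the substantial direction, and here I would use the strip energy \eqref{Estrip}. Writing $V_n(D_n)+6c_F n=\sum_{x\in D_n}E_{\rm loc}(x)+E_S(D_n)$ from \eqref{Eloc}, I regroup the right-hand side by assigning to each strip center $x\in D_n\cap\partial\mathcal{L}_{FS}$ its strip energy $E_{\rm strip}(x)$ (which already absorbs the term $-c_S$ of its substrate bond) and leaving the remaining atoms in an uncovered remainder. The weights in \eqref{Estrip_below}--\eqref{weigths_above} — here all in the regime \eqref{Eweight_below1} since $q>2$, and with all above-weights equal to $1$ because the bottom neighbors $x_\pm$ are non-bondable when $r>1$ — are designed so that every atom's $E_{\rm loc}$ is counted with total weight at most $1$; as $E_{\rm loc}\geq 0$, this yields
$$V_n(D_n)\ \geq\ \sum_{x\in D_n\cap\partial\mathcal{L}_{FS}}E_{\rm strip}(x)\ +\ \sum_{x\ \mathrm{uncovered}}E_{\rm loc}(x)\ -\ 6c_F n.$$
The core is then a purely local lemma, unconditional on $c_S$: $E_{\rm strip}(x)\geq 6c_F-c_S$ for every strip center (equivalently, the weighted sum of local energies inside a strip is at least $6c_F$). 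I would prove it by a finite case analysis on the degree $d(x)\in\{0,1,2,3,4\}$ of the bottom atom and the occupation of $x_\pm,\tilde x,\tilde x_\pm$, checking that the missing bonds of the horizontal and upper neighbors always compensate the gain from the bonds of $x$. Plugging this in and using $m\leq n$ together with $6c_F-c_S\leq 0$ — this is precisely where \eqref{wetting_condition1} enters — gives
$$V_n(D_n)\ \geq\ m\,(6c_F-c_S)-6c_F n\ \geq\ n\,(6c_F-c_S)-6c_F n\ =\ -c_S n,$$
which is assertion (i).

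For the strict inequality (ii) I would split according to $c_S$. If $c_S>6c_F$, any non-wetting $D_n$ fails $D_n\subset\partial\mathcal{L}_{FS}$, hence contains an atom without a substrate bond, so $m<n$ and the step $m(6c_F-c_S)>n(6c_F-c_S)$ is strict. The hard part, and what I expect to be the main obstacle, is the threshold $c_S=6c_F$, where that step degenerates to an equality and, moreover, the local lemma can be saturated even at non-isolated strip centers (e.g.\ a degree-$2$ center whose two horizontal neighbors have degree $4$). There strictness cannot be read off a single strip: it must be extracted by characterizing the full set of local equality cases and then \emph{propagating} this rigidity globally — the saturating local patterns force the existence of further atoms (the extra neighbors such as $x\pm 2\tone$ and the upper row) that carry unabsorbed positive $E_{\rm loc}$ in the uncovered remainder — until one concludes $b=0$ and $D_n\subset\partial\mathcal{L}_{FS}$, i.e.\ that $D_n$ is itself a wetting configuration. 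Carefully verifying the weight bookkeeping (total weight $\leq 1$ per atom, with the shared weight $\tfrac12$ on $x_\pm$ relevant only when $r=2$) and carrying out this equality/rigidity analysis at the threshold are the two points demanding genuine care; the remainder is bond counting.
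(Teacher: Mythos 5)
Your reduction $V_n(D_n)=-2c_Fb-c_Sm$ (with $b$ the number of film bonds and $m=\#(D_n\cap\partial\mathcal{L}_{FS})$), the identity $V_n(D^{\rm w}_n)=-c_Sn$, and your necessity argument (a hexagonal Wulff cluster with $b\geq 3n-C\sqrt n$ bonds beats $-c_Sn$ for all large $n$ when $c_S<6c_F$, so minimality fails along every subsequence) are exactly the paper's; the paper's own proof of this proposition is in fact just a citation of \cite[Prop.~3.1]{PiVe1} together with the observation $V_n(D^{\rm w}_n)=-c_Sn$, and the substantive argument is the one spelled out for Proposition \ref{wetting_regime3}. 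Where you genuinely diverge is sufficiency: the paper argues by induction on $n$, removing the topmost horizontal row $L$ with $\ell$ atoms and using $V_n(D_n)\geq V_{n-\ell}(D_n\setminus L)-6c_F(\ell-1)-4c_F$, whereas you import the strip machinery of Section \ref{local_energy}, which the paper reserves for the dewetting/compactness analysis. Your route is sound as far as it goes: you are right that the bound $E_{\rm strip}(x)\geq 6c_F-c_S$ from Lemma \ref{strip_local} is pure bond counting and does not actually use the dewetting hypothesis (which only makes the bound positive), and your weight bookkeeping for $r>1$ is correct ($x_\pm$ are never substrate-bonded, all above-weights equal $1$, and lower sides are shared only when $r=2$, with total weight exactly $1$). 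This gives assertion (i) for all $c_S\geq 6c_F$ and assertion (ii) for $c_S>6c_F$.

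The genuine gap is assertion (ii) at the threshold $c_S=6c_F$, and you correctly flag it but do not close it. There your chain degenerates to $V_n(D_n)\geq -c_Sn$ with no strictness: what is needed is the combinatorial statement that every non-wetting configuration satisfies $b<3(n-m)$ strictly, and this cannot be read off the strip bound, since at the threshold every strip can be saturated ($E_{\rm strip}=0$, e.g.\ a substrate-bonded atom whose horizontal and upper neighbors are present and fully coordinated gives $E_{\rm strip,below}=4c_F-c_S$ exactly) and the weight bookkeeping can be tight over arbitrarily large patches. The ``rigidity propagation'' you invoke — saturation forces further atoms carrying unabsorbed positive $E_{\rm loc}$, until one reaches the top row or the lateral boundary — is plausible and, I believe, true, but it is a second global induction/exhaustion of essentially the same length as the whole proposition, not a routine verification; as written it is a sketch, not a proof. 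By contrast, the paper's row-removal induction handles the threshold automatically because each step carries a uniform margin: $-6c_F\ell+2c_F\geq -c_S\ell+2c_F>-c_S\ell$ whenever $c_S\geq 6c_F$, and the base case of configurations contained in the bottom row loses at least $c_S-4c_F>0$ per atom off $\partial\mathcal{L}_{FS}$, since such atoms have at most two film bonds and no substrate bond. So either carry out the equality-case classification and its global propagation in full, or replace your treatment of (ii) by the paper's induction; the rest of your argument can stand.
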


\begin{proof}

The proof is the same as the proof of Proposition 3.1 in \cite{PiVe1} for the model $\mathcal{M}^0$ with $q\neq1$, by observing that 
\begin{equation}\label{wetting_energy}
V_n (D^{\rm w}_n)=-c_S n.
\end{equation}
also with respect to the model $\mathcal{M}^1_{\Lambda}(z)$ for the conditions of the assertion on the wall parameters. 

\end{proof}

We now consider the case of $z \in Z_S^1$ with $r=1$ and $q=2$ (for which  $\mathcal{M}^1_{\Lambda}(q,r)=\mathcal{M}^0_{\Lambda}(1,q/2)$ as observed in Section \ref{comparison}) where it is possible for connected configurations to have all atoms bonded with a substrate atom.


 \begin{proposition}\label{wetting_regime2}
Let $n \in \mathbb{N}$ and $z \in Z_S^1$ with $r=1$ and $q=2$.  Any wetting configuration  $D^{\rm w}_n\subset\partial\mathcal{L}_{FS}(z)$  satisfies \PPP the following two assertions: \EEE
\begin{itemize} 
	\item[(i)] $V_n(D^{\rm w}_n)=\min{V_n(D_n)}$,
\item[(ii)] \PPP $V_n(D^{\rm w}_n)<V_n(D_n)$ \EEE for  \PPP any $D_n$ that is not a wetting configuration, 
\end{itemize}
if and only if  
\begin{equation}\label{wetting_condition2}
c_{S}\geq 4 c_{F}.
\end{equation}

\end{proposition}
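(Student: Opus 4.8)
The plan is to exploit the identification $\mathcal{M}^1_{\Lambda}(2,1)=\mathcal{M}^0_{\Lambda}(1,1)$ recorded in Section \ref{comparison}: for $z=(1,2,1)$ the wall \eqref{wall} is $\partial\mathcal{L}_S=\Zz\times\{0\}$, which is exactly the substrate of the model $\mathcal{M}^0_{\Lambda}(1,1)$ treated in \cite{PiVe1} with unit substrate-bond distance, so the claim is the $q=1$ case of that analysis. To make the argument self-contained I would first rewrite \eqref{V} combinatorially. Since every site of $\partial\mathcal{L}_{FS}=\partial\mathcal{L}_F$ lies at distance $e_{FS}$ from exactly one substrate atom (the one directly below it) while no site strictly above $\partial\mathcal{L}_F$ reaches the wall, \eqref{defvf} gives $v^1(x)\in\{0,-c_S\}$ and
\begin{equation*}
V_n(D_n)=-2c_F\,b(D_n)-c_S\,s(D_n),
\end{equation*}
where $b(D_n)$ is the number of film bonds and $s(D_n)$ the number of atoms of $D_n$ lying on $\partial\mathcal{L}_{FS}$. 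A wetting configuration \eqref{connected_wetting1} is then a connected horizontal segment with $b=n-1$ and $s=n$, so $V_n(D^{\rm w}_n)=-2c_F(n-1)-c_Sn=:V^{\min}(n)$ (in contrast with \eqref{wetting_energy} of Proposition \ref{wetting_regime1}, where the wetting atoms carry no film bonds — this is precisely why the threshold drops from $6c_F$ to $4c_F$).

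For the sufficiency of \eqref{wetting_condition2} I would argue by induction on $n$, in the spirit of \cite{Heitmann-Radin80}, showing that $V_n(D_n)\ge V^{\min}(n)$ for every $D_n$, with equality only at wetting configurations. Given $D_n$, select the atom $x$ that is leftmost in the topmost occupied row and remove it, so that $V_n(D_n)=V_{n-1}(D_n\setminus\{x\})-2c_F d_F(x)-c_S d_S(x)$ with $d_F(x),d_S(x)$ the numbers of its film and substrate bonds. The half-plane triangular geometry forces $d_F(x)\le 3$ and $d_S(x)=0$ when the topmost occupied row lies strictly above $\partial\mathcal{L}_F$ (the left, upper-left, upper-right neighbours are absent), while $d_F(x)\le 1$ and $d_S(x)=1$ when $D_n\subset\partial\mathcal{L}_F$; in either case $c_S\ge 4c_F$ yields $2c_F d_F(x)+c_S d_S(x)\le 2c_F+c_S$. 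Combined with the inductive estimate $V_{n-1}(D_n\setminus\{x\})\ge V^{\min}(n-1)$ this gives $V_n(D_n)\ge V^{\min}(n)$. Note that neither the removal step nor the estimate uses connectivity of $D_n$, so disconnected competitors are covered automatically.

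The necessity of \eqref{wetting_condition2} I would establish with an explicit competitor: a triangular patch of $n=k(k+1)/2$ atoms resting on the wall realizes $\sim 3n$ film bonds and only $\sim\sqrt{2n}$ substrate contacts, hence has energy $-6c_Fn+O(\sqrt n)$. When $c_S<4c_F$ this is strictly below $V^{\min}(n)=-(2c_F+c_S)n+2c_F$ for all large $n$, so the wetting configuration fails to minimize along a subsequence, which is exactly the form of necessity stated in Theorem \ref{wetting_theorem}.

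I expect the genuine difficulty to lie not in the energy bound but in the \emph{rigidity} (equality) analysis. One must track the two ways equality can occur in the inductive step: $(d_F(x),d_S(x))=(1,1)$, a wall atom extending a flat row, and — precisely at the degenerate threshold $c_S=4c_F$ — $(d_F(x),d_S(x))=(3,0)$, an above-row atom with three film bonds. The second case must be excluded by observing it is incompatible with $D_n\setminus\{x\}$ being a wetting configuration, since an atom placed above a horizontal segment can bond to at most its two lower neighbours, forcing $d_F(x)=2<3$. Carrying this exclusion through the induction (together with checking that the leftmost–topmost atom always attains the stated bond count) is the step I would allocate the most care to, as it is responsible for the strict inequality in assertion (ii).
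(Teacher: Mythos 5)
Your proof is correct, and the reduction you note at the outset ($\mathcal{M}^1_{\Lambda}(2,1)=\mathcal{M}^0_{\Lambda}(1,1)$) is exactly the observation the paper relies on. The difference lies in what happens next: the paper's proof of Proposition \ref{wetting_regime2} is a one-line deferral --- it verifies the wetting energy $V_n(D^{\rm w}_n)=-c_Sn-2c_F(n-1)$ and then invokes the proof of Proposition 3.2 of \cite{PiVe1} for the model $\mathcal{M}^0$ with $q=1$ verbatim --- whereas you give a self-contained argument. Judging from the sibling argument visible in Proposition \ref{wetting_regime3}, the \cite{PiVe1}-style induction removes an entire topmost row $L$ at a time and uses the Wulff shape as the competitor for necessity; your induction instead removes a single atom (the leftmost in the topmost occupied row) and uses a triangular patch as the dewetting competitor. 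Both are crystallization-style inductions, and both necessity competitors have energy $-6c_Fn+O(\sqrt n)$, so the substance is the same. What your version buys is independence from \cite{PiVe1} together with an explicit rigidity analysis for assertion (ii): the exclusion of the $(d_F,d_S)=(3,0)$ equality case at the threshold $c_S=4c_F$, via incompatibility with $D_n\setminus\{x\}$ being a horizontal segment on $\partial\mathcal{L}_{FS}$, is precisely the point the paper leaves implicit in its citation, and your treatment of it is sound (in that case $x$ has no neighbor in its own row, so $d_F(x)\leq 2$, giving removal energy at most $4c_F<2c_F+c_S$). What the paper's route buys is brevity and consistency with its global strategy of transferring every statement to a model already treated in \cite{PiVe1} or to the auxiliary model of Section \ref{sec:M1}.
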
 

\begin{proof}
The proof is the same as the proof of Proposition 3.2 in \cite{PiVe1} for the model $\mathcal{M}^0$ with $q=1$, by observing that 
\begin{equation}\label{wetting_energy}
V_n (D^{\rm w}_n)=-c_S n -2c_F(n-1)
\end{equation}
also with respect to the model $\mathcal{M}^1_{\Lambda}(z)$ for the conditions of the assertion on the wall parameters.

\end{proof}

\PPP We finally address the remaining case of $r=1$ and $q>2$ for which we notice that $\partial\mathcal{L}_{FS}$ contains separated pairs of neighboring film atoms.

 \begin{proposition}\label{wetting_regime3} 
Let $n \in \mathbb{N}$  and  $z \in Z_S^1$ with $r=1$ and $q>2$.  Any wetting configuration  $D^{\rm w}_n\subset\partial\mathcal{L}_{FS}(z)$   satisfies \PPP the following two assertions: \EEE
\begin{itemize} 
	\item[(i)] $V_n(D^{\rm w}_n)=\min{V_n(D_n)}$,
\item[(ii)] \PPP $V_n(D^{\rm w}_n)<V_n(D_n)$ \EEE for  \PPP any $D_n$ that is not a wetting configuration,  
\end{itemize}
if and only if  
\begin{equation}\label{wetting_condition3}
c_{S}\geq 5 c_{F}.
\end{equation}

\end{proposition}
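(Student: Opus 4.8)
The plan is to recast everything as a bond count and then reduce the sufficiency to a single isoperimetric-type inequality on $\mathcal{L}_F$. First I would record the two elementary facts special to $z=(1,q,1)$ with $q>2$: every site of $\partial\mathcal{L}_{FS}(z)$ lies directly above a substrate atom, so by \eqref{defvf} each atom of a configuration has \emph{at most one} substrate neighbour (hence $v^1\in\{0,-c_S\}$), and every atom of the bottom row $\partial\mathcal{L}_F$ has at most four film neighbours, the two horizontal and the two upper ones, because $\mathcal{L}_F$ is confined to $k_2\ge 0$. Writing $b(D_n)$ for the number of film bonds, $m(D_n)$ for the number of atoms carrying a substrate bond, $f(x)$ for the number of film neighbours of $x\in D_n$, and $\chi_{\rm sub}(x)\in\{0,1\}$ for the presence of a substrate bond, the energy \eqref{V} becomes
\[
V_n(D_n)=-2c_F\,b(D_n)-c_S\,m(D_n),
\]
so that $W(n):=V_n(D^{\rm w}_n)=-c_S n-2c_F\lfloor n/2\rfloor$, and minimizing $V_n$ is the same as maximizing $2c_F b+c_S m$. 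This formula is consistent with the wetting energies recorded in Propositions \ref{wetting_regime1}--\ref{wetting_regime2}.

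For the necessity of \eqref{wetting_condition3} I would argue by contraposition and use only assertion (i). If $c_S<5c_F$, I place on $\partial\mathcal{L}_F$ a near-optimal triangular cluster realizing the maximal bond count $b\ge 3n-C\sqrt n$ on the triangular lattice; then $V_n\le -6c_F n+O(\sqrt n)$, whereas $V_n(D^{\rm w}_n)=-(c_S+c_F)n+O(1)$. Since $c_S+c_F<6c_F$, for all large $n$ the cluster strictly beats any wetting configuration, so (i) fails for all large $n$ and thus along every subsequence. This already yields necessity in the form stated after \eqref{wetting_condition_total}.

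For the sufficiency I assume $c_S\ge 5c_F$ and show $V_n(D_n)\ge W(n)$ with equality only for wetting configurations. Using $\sum_x(6-f(x))=6n-2b$ together with $c_S\ge 5c_F$ and $n-m\ge 0$, the target bound $2c_Fb+c_Sm\le c_Sn+2c_F\lfloor n/2\rfloor$ reduces to the parameter-free lattice inequality
\begin{equation}\label{key_iso}
b(D_n)\ \le\ 3n-\tfrac52\,m(D_n)-\tfrac12\,(n\bmod 2),
\end{equation}
equivalently $\sum_x(6-f(x))\ge 5m+(n\bmod 2)$; a direct check shows \eqref{key_iso} holds with equality \emph{exactly} on wetting configurations (complete dominoes, plus one extra bottom atom bonded only to a domino when $n$ is odd). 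I would establish \eqref{key_iso} by induction on $n$: removing an atom $x$ changes the slack $\Psi(D_n):=3n-\tfrac52 m-\tfrac12(n\bmod 2)-b$ by $3-\tfrac52\chi_{\rm sub}(x)-\tfrac12\epsilon_n-f(x)$ with $\epsilon_n=\pm1$ according to parity, so the step closes as long as one can select $x$ with $f(x)+\tfrac52\chi_{\rm sub}(x)$ below the parity-dependent threshold (for instance an $N$-atom with $f(x)\le 3$ when $n$ is even, $f(x)\le 2$ when $n$ is odd, or a bottom atom bonded only within its domino).

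The main obstacle is precisely this selection, and it is exactly the novelty that forces the threshold $5c_F$ rather than $4c_F$ or $6c_F$: a substrate atom may legitimately carry up to four film bonds, so no purely local charge closes the induction, and these bonds must be discharged against the gap and second-row atoms that the domino structure forces to be present (the gaps of width $q-1\ge 2$ between consecutive dominoes rule out horizontal bonds across distinct dominoes). I expect to resolve this, following \cite{PiVe1}, by first reducing to almost-connected configurations via the transformation $\mathcal{T}$ of Definition \ref{transformation}, then removing a topmost atom of the rightmost occupied column (which has $f\le 3$ and no substrate bond), and peeling the bottom row one domino at a time to neutralize the parity term $(n\bmod 2)$ in the odd case. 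Finally, the strict inequality required for assertion (ii)—delicate only at the boundary case $c_S=5c_F$, where $-V_n$ and $-W(n)$ agree up to the slack in \eqref{key_iso}—follows from the equality analysis of \eqref{key_iso}, which characterizes wetting configurations as the unique minimizers.
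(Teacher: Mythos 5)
Your necessity argument is sound and is essentially the paper's own (compare the wetting energy $-(c_S+c_F)n+O(1)$ with a near-optimal cluster of energy $-6c_Fn+O(\sqrt n)$ and let $n\to\infty$), and your algebraic reduction of sufficiency is correct: writing $\Psi(D_n):=3n-\tfrac52 m(D_n)-\tfrac12(n\bmod 2)-b(D_n)$, one checks that $V_n(D_n)-V_n(D^{\rm w}_n)=(c_S-5c_F)(n-m(D_n))+2c_F\Psi(D_n)$, so the parameter-free inequality $\Psi(D_n)\ge 0$, with equality exactly on wetting configurations, does imply both (i) and (ii) whenever $c_S\ge 5c_F$. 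But note that this identity also shows your inequality \emph{is} the boundary case $c_S=5c_F$ of the proposition, so it carries the entire content of the sufficiency direction --- and your proof of it is not complete. The single-atom-removal induction cannot be closed by the selections you propose. When $n$ is odd the step needs an atom with $f(x)\le 2$ and no substrate bond (or a substrate-bonded atom with $f(x)=0$), and such an atom need not exist: in a hexagonal cluster of $7$ atoms (or any odd-cardinality hexagon) away from the wall, every boundary atom has degree exactly $3$, and removing one of them changes the slack by $3-\tfrac12-3=-\tfrac12$, so the hypothesis $\Psi\ge 0$ at $n-1$ does not return $\Psi\ge 0$ at $n$. Likewise ``peeling a domino'' moves the slack the wrong way: removing a domino carrying $e>0$ bonds to the rest of the configuration changes $\Psi$ by $-e$. (Your appeal to the transformation $\mathcal{T}$ of Definition \ref{transformation} is harmless --- $\mathcal{T}$ only activates bonds, hence can only decrease $\Psi$, so it suffices to treat almost-connected configurations --- but it does not repair the selection problem.)

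The paper closes exactly this gap by a coarser induction step: after first disposing of configurations contained in the bottom row (where a direct count works), it removes the entire topmost row $L$, consisting of $\ell$ atoms with no substrate bonds. Such a row carries at most $3\ell-1$ bonds in total (at most $\ell-1$ horizontal and at most $2\ell$ downward), whence $V_n(D_n)\ge V_{n-\ell}(D_n\setminus L)-2c_F(3\ell-1)$, and combined with the induction hypothesis this yields a positive slack $(c_S-5c_F)\ell+c_F>0$ uniformly in $\ell$ and in the parity of $n$. The point is that the gain of $2c_F$ is obtained once \emph{per row}, not once per atom, which is precisely what your atom-by-atom bookkeeping cannot produce; in your language the same computation gives $\Psi(D_n)\ge\Psi(D_n\setminus L)+3\ell-\tfrac12-(3\ell-1)=\Psi(D_n\setminus L)+\tfrac12$, so your framework would also close (including the equality analysis, since the step is strictly monotone whenever some atom lies above the bottom row) if you replaced single-atom removal by row removal. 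As written, however, the key lemma is asserted rather than proven, and the proof is incomplete.
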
 

\begin{proof}

\PPP The proof is based on the same arguments employed for previous two propositions and on the following observations. 
Note that (i) easily follows from (ii) and the fact that any wetting configuration $D^{\rm w}_n$ for $r=1$ and $q>2$ has the same energy given by 
\begin{equation}\label{wetting_energy_odd}
V_n (D^{\rm w}_n)=-c_S n -c_F(n-1).
\end{equation}
if $n$ is odd, and 
$$
V_n (D^{\rm w}_n)=-c_S n -c_Fn.
$$ 
if $n$ is even.

For the sufficiency of  \eqref{wetting_condition3} in order to prove (ii) we proceed by induction on $n$. We first notice that (ii) is trivial for $n=1$. Then, we  assume that (ii) holds true for every $k=1,\dots,n-1$ and prove that it holds also for $n$. Let $D_n$ be a crystalline configuration that is not a wetting configuration for $r=1$ and $q>2$. 
We can assume without loss of generality that $D_n\cap(\mathbb{R}\times\{s>e_S \})\neq\emptyset$ because if not, we can easily see that the energy of $D_n$ is higher than the energy of $D^{\rm w}_n$ at least by $c_S-2c_F$, which is positive by  \eqref{wetting_condition3},  since the elements in $D_n\setminus\partial\mathcal{L}_{FS}\neq\emptyset$ have at most two film bonds and no substrate bonds. Let $L$ be the last line in $\Rz\times\{s>0\}$ parallel to $\tone$ that intersects $D_n$ by moving upwards from $\Rz\times\{e_S\}$ (which exists since $D_n$ has a finite number of atoms). We can then use the same  argument used  for \cite[Eq. (36)]{PiVe1} to prove that
\begin{align*}
 V_n(D_n) & \geq  V_{n-\ell}(D_n\backslash L)-6c_F(\ell-1)-4c_F \end{align*}
and hence, by induction hypothesis if $n$ is odd (the other case being analogous), we have that
\begin{align*}
V_n(D_n) & \geq  V_{n-\ell}(D_n\backslash L)-6c_F(\ell-1)-4c_F> V_{n-\ell}(D_n\backslash L)-6c_F\ell\\
 &> -c_S (n-\ell)-c_F (n-\ell-1)-6c_F\ell\geq -c_Sn\\
  &> -c_S (n-\ell)-(c_F (n-\ell-1)+c_F\ell)-5c_F\ell\\
  &\geq -c_Sn-c_F(n-1)=V_n(D^{\rm w}_n), \\
 \end{align*}
where we used \eqref{wetting_condition3} in the last inequality and \eqref{wetting_energy_odd} in the last equality.

In order to prove the necessity of \eqref{wetting_condition3} for assertions (i) and (ii), we consider the Wulff shape with $n$ atoms in $\Rz\times\{s>\SSS e_{FS} \BBB \}$ which has energy $-6c_Fn+C\sqrt{n}$ for some constant $C>0$, and observe that  
$$-c_S n -c_Fn < -6c_Fn +C \sqrt{n} $$ 
by assertion (ii). From dividing by $n$ and letting  $n\to\infty$  we obtain \eqref{wetting_condition3}.

\end{proof}

 We are now ready to characterize the wetting regime for the models $\mathcal{M}^1_{\Lambda}(z)$. We refer to \eqref{wetting_condition1}, \eqref{wetting_condition2}  and \eqref{wetting_condition3} as \emph{wetting conditions}. 
 \BBB

\PPP
 \begin{theorem}[Wetting regime for $\mathcal{M}^1_{\Lambda}(z)$]\label{wetting_theorem_Z1}
For every $n\in\Nz$ any  wetting configuration $D^{\rm w}_n\in\mathcal{C}_n$ satisfies \EEE the following two assertions: 
\begin{itemize} 
	\item[(i)] $V_n(D^{\rm w}_n)=\min{V_n(D_n)}$,
	\item[(ii)] \PPP $V_n(D^{\rm w}_n)<V_n(D_n)$ \EEE for  \PPP every crystalline configuration $D_n\in\mathcal{C}_n$ that is not a wetting configuration,
\end{itemize}
if and only if  
\begin{equation}\label{wetting_condition_total} 
\begin{cases}
c_{S}\geq 6c_{F} & \text{if $z \in Z_S^1$ with $r>1$  \emph{(}and so $q>2$\emph{)},}\\
c_{S}\geq 5c_{F} & \text{if $z \in Z_S^1$ with $r=1$ and $q>2$}, \\
c_{S}\geq 4c_{F} & \text{if $z \in Z_S^1$ with $r=1$ and $q=2$}. 
\end{cases}
\end{equation}
\FFF In particular, for the necessity of \eqref{wetting_condition_total} it is enough assertion (i), and more specifically that 
there exists an increasing subsequence $(n_k)_{k \in \mathbf{N}}$ such that $V_{n_k}(D^{\rm w}_{n_k})=\min{V_{n_k}(D_{n_k})}$ holds for every $n_k$. \EEE	
\end{theorem}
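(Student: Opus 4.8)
The plan is to assemble Theorem \ref{wetting_theorem_Z1} directly from the three preceding propositions, since the trichotomy appearing in the wetting condition \eqref{wetting_condition_total} and in the definition of wetting configuration for $\mathcal{M}^1_{\Lambda}(z)$ is engineered precisely to partition the admissible wall vectors $z\in Z_S^1$ into the three regimes already treated. Recall that $z\in Z_S^1$ forces $p=1$ and $1\le r\le q/2$, so the three mutually exclusive and exhaustive cases are: $r>1$ (whence $q>2$), $r=1$ with $q>2$, and $r=1$ with $q=2$ (the boundary case $r=q/2$). No new minimization argument should be needed; the work is to glue the results correctly.

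First I would fix $z\in Z_S^1$ and identify into which of the three cases it falls. In each case the notion of wetting configuration coincides with the one used in the matching proposition, and the relevant line of \eqref{wetting_condition_total} coincides with the corresponding wetting condition: $c_S\ge 6c_F$ via Proposition \ref{wetting_regime1}, $c_S\ge 5c_F$ via Proposition \ref{wetting_regime3}, and $c_S\ge 4c_F$ via Proposition \ref{wetting_regime2}. Hence the biconditional between the pair of assertions (i)--(ii) and \eqref{wetting_condition_total} is immediate, case by case, from the corresponding proposition, and the three biconditionals glue into the single statement of the theorem.

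It then remains to upgrade the necessity half to the sharper claim that assertion (i) alone, even only along an increasing subsequence $(n_k)$, already forces \eqref{wetting_condition_total}. Here I would revisit the necessity arguments inside the propositions and observe that they are all asymptotic comparisons against the Wulff shape of $n$ atoms contained in $\Rz\times\{s>e_{FS}\}$, whose energy is $-6c_Fn+C\sqrt n$. Although Proposition \ref{wetting_regime3} phrases this through the strict inequality (ii), the minimality in (i) already yields the nonstrict bound $V_n(D^{\rm w}_n)\le -6c_Fn+C\sqrt n$, since the Wulff shape is an admissible competitor. Dividing by $n$ and letting $n\to\infty$ kills the term $C\sqrt n/n$, and inserting the explicit wetting energies ($-c_Sn$; $-c_Sn-2c_F(n-1)$; and $-c_Sn-c_F(n-1)$ or $-c_Sn-c_Fn$, according to the case) reproduces exactly the inequality between $c_S$ and $c_F$ in the matching line of \eqref{wetting_condition_total}. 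Because this passage to the limit is insensitive both to strictness and to which indices are used, it runs verbatim along any subsequence $n_k\to\infty$.

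The only point requiring genuine (though modest) care — and what I expect to be the main obstacle — is verifying that the case split is a true partition of $Z_S^1$ and that the definition of wetting configuration really matches the relevant proposition in each regime, so that no admissible $z$ is left uncovered and no mismatch in the definition of $D^{\rm w}_n$ slips in. Once that bookkeeping is confirmed, the theorem follows by assembly, with the subsequence refinement supplied by the asymptotic nature of the necessity comparisons.
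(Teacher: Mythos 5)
Your proposal is correct and matches the paper's own proof, which likewise assembles the theorem case by case from Propositions \ref{wetting_regime1}, \ref{wetting_regime2}, and \ref{wetting_regime3} and obtains the subsequence-refined necessity claim from the limiting (Wulff-shape comparison) procedure in those propositions' necessity arguments. Your added observation that minimality (i) alone yields the nonstrict competitor bound, which survives division by $n$ and passage to the limit along any subsequence, is exactly the justification the paper leaves implicit.
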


\begin{proof}
The first assertion directly follows from Propositions  \ref{wetting_regime1}, \ref{wetting_regime2}, and \ref{wetting_regime3} for $z\in Z^1_S$, while the second assertion is a direct consequence of the limiting procedure in the proofs of the necessity of the wetting conditions of such results. 

\end{proof}


\PPP We now move on studying the dewetting regime for the model $\mathcal{M}^1_{\Lambda}(z)$. Therefore,  in the remaining part of this section we work  only under the assumption
\begin{equation}\label{dewetting_condition_Z1} 
\begin{cases}
c_{S}< 6c_{F} & \text{if $z \in Z_S^1$ with $r>1$  (and so $q>2$),}\\
c_{S}< 5c_{F} & \text{if $z \in Z_S^1$ with $r=1$ and $q>2$}, \\
c_{S}<4c_{F} & \text{if $z \in Z_S^1$ with $r=1$ and $q=2$}. 
\end{cases}
\end{equation}

We begin by \PPP establishing a lower bound uniform for \PPP every \EEE $x\in D_n\cap\partial \mathcal{L}_{FS}$ in terms of $c_F$ and $c_S$ \PPP of the \EEE strip energy $E_{\rm strip}(x)$ defined in Section \ref{local_energy}. 

 \begin{lemma}\label{strip_local}
 If \eqref{dewetting_condition_Z1}  holds, then 
$$
E_{\rm strip}(x)\geq 
\Delta_{\rm strip}
$$
with
\begin{equation}\label{delta_strip}
\Delta_{\rm strip}:=\begin{cases} 
6c_{F}-c_{S} & \text{if $z \in Z_S^1$ with $r>1$  \emph{(}and so $q>2$\emph{)},}\\
5c_{F}-c_{S}  & \text{if $z \in Z_S^1$ with $r=1$ and $q>2$},\\
4c_{F}-c_{S}  & \text{if $z \in Z_S^1$ with $r=1$ and $q=2$}, 
\end{cases}
\end{equation}
for every $x\in D_n\cap\partial \mathcal{L}_{FS}$.
 \end{lemma}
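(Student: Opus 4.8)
The plan is to establish a lower bound on the strip energy $E_{\rm strip}(x)$ by decomposing it into its ``below'' and ``above'' contributions and bounding each using the combinatorial structure of the triangular lattice together with the substrate-bond information. The key quantity to track is how many film bonds are missing from the sites comprising the strip, since each $E_{\rm loc}(y)$ counts precisely the missing film bonds at $y$ (a nonnegative quantity), while the $-c_S$ term in $E_{\rm strip, below}$ records the single substrate bond that the strip center $x\in\partial\mathcal{L}_{FS}$ is guaranteed to contribute. The three cases in \eqref{delta_strip} correspond exactly to the three wetting thresholds $6c_F$, $5c_F$, $4c_F$ appearing in Propositions~\ref{wetting_regime1}, \ref{wetting_regime2}, and \ref{wetting_regime3}, so the bound should be read as a \emph{localized} version of those global wetting estimates: the strip is the minimal cluster of sites around $x$ whose collective missing-bond count, discounted by the available substrate bond, cannot fall below the corresponding deficit $\Delta_{\rm strip}$.

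\textbf{First I would} fix $x:=(x^1,e_{FS})\in D_n\cap\partial\mathcal{L}_{FS}$ and separately analyze the case $\tilde x = x$ (i.e. $y_M=e_{FS}$, the column above $x$ is empty) from the case $\tilde x\neq x$. In the former situation the strip reduces to $\{x,x_\pm\}$ together with the two ``above'' sites $\tilde x_\pm$, and one argues directly: since $x$ has its column empty above, $x$ has at most a few film neighbors, forcing $E_{\rm loc}(x)$ to be large, and the weights in \eqref{Eweight_below1}--\eqref{Eweight_below2} are chosen precisely so that the weighted sum of local energies compensates the $-c_S$. In the generic case $\tilde x\neq x$, I would exploit that $E_{\rm strip, above}(x)\geq 0$ always (it is a nonnegative combination of local energies with no negative constant), so the whole burden falls on showing
\begin{equation*}
E_{\rm strip, below}(x)=w(x)E_{\rm loc}(x)+w_+(x)E_{\rm loc}(x_+)+w_-(x)E_{\rm loc}(x_-)-c_S\geq\Delta_{\rm strip}.
\end{equation*}
This is the heart of the matter, and it is a finite case check over the possible occupancy patterns of the neighbors of $x$, $x_+$, $x_-$ within $\mathcal{L}_F$, organized according to which of the three weight choices is in force.

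\textbf{The subtle point} is the matching between the weight triple $(w,w_+,w_-)$ and the wall-parameter case, because the weights are deliberately set up so that when strips of adjacent centers $x$ and $x_\pm$ overlap, the shared local energies are not double-counted across strips (this is why $w_\pm\in\{1/4,1/2,1\}$), yet each individual strip still meets its own threshold. For the $r=1$, $q>2$ case yielding $\Delta_{\rm strip}=5c_F-c_S$, the novelty relative to \cite{PiVe1} is that $\partial\mathcal{L}_{FS}$ consists of \emph{separated pairs} of neighboring sites: $x$ may have exactly one film neighbor in $\partial\mathcal{L}_{FS}$ (its pair partner) rather than two, so the missing-bond accounting yields a deficit of $5c_F$ rather than $6c_F$ before discounting $c_S$. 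I would verify that the weight assignment \eqref{Eweight_below3}, governing the $q=p=1$, $z_2=1$ regime (equivalently $\mathcal{M}^1_\Lambda(2,1)=\mathcal{M}^0_\Lambda(1,1)$), correctly produces the $4c_F-c_S$ bound by the symmetric splitting $(1/2,1/2,1/2)$.

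\textbf{I expect the main obstacle} to be the bookkeeping in the boundary cases where the column above $x$ is short (few atoms) so that $\tilde x$ sits only one or two layers above $x$, making $x$, $x_\pm$, $\tilde x$, $\tilde x_\pm$ not all distinct or nearly coincident; here the definition of $\mathcal{S}(x)$ via the intersection with $D_n$ and the conditional weights $w_\pm(\tilde x)$ in \eqref{weigths_above} interact delicately, and one must ensure the crossed-out sites of Figure~\ref{fig:strip} genuinely cannot belong to $D_n$ so that no spurious bonds are counted. I would handle this by first proving the cleaner generic bound assuming all six strip sites are distinct and occupied, then treating the degenerate configurations as separate short computations, checking in each that the inequality only improves. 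Throughout, the assumption \eqref{dewetting_condition_Z1} is used solely to guarantee $\Delta_{\rm strip}>0$, so that the stated inequality is nontrivial, but the algebraic identity $E_{\rm strip}(x)\geq\Delta_{\rm strip}$ itself should hold as a pure consequence of the lattice combinatorics independent of the sign of $c_S-\lambda c_F$.
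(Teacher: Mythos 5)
Your overall plan (split $E_{\rm strip}$ into its below and above parts and count missing film bonds) points in the right direction, but the central reduction you propose is false, and it is exactly where the paper's proof does its real work. You claim that when $\tilde x\neq x$ one may discard the above part as merely nonnegative and prove $E_{\rm strip,below}(x)\geq \Delta_{\rm strip}$ alone. That inequality fails. Take $z\in Z_S^1$ with $r>1$, so that $\Delta_{\rm strip}=6c_F-c_S$ and the weights are $\left(1,\tfrac12,\tfrac12\right)$, and take a configuration in which $x\in D_n\cap\partial\mathcal{L}_{FS}$, its side neighbours $x_\pm$, the above-row neighbours of all three, and an atom of $D_n$ vertically above $x$ (so that $\tilde x\neq x$) are all present; then $x$ and $x_\pm$ each miss only their two below bonds, whence
\begin{equation*}
E_{\rm strip,below}(x)=2c_F+\tfrac12\cdot 2c_F+\tfrac12\cdot 2c_F-c_S=4c_F-c_S<6c_F-c_S=\Delta_{\rm strip}.
\end{equation*}
Indeed, the paper proves precisely $E_{\rm strip,below}\geq 4c_F-c_S$ (resp.\ $3c_F-c_S$, $2c_F-c_S$ in the other two cases of \eqref{delta_strip}) and obtains the remaining $2c_F$ from the above part: the key observation, absent from your proposal, is that $E_{\rm strip,above}$ is not just $\geq 0$. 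Since $\tilde x$ is the topmost atom of $D_n$ in the column of $x$, the site $\tilde x+2\ttwo-\tone$ directly above it is vacant, and the three-case analysis of whether $\tilde x_+$, $\tilde x_-$ belong to $D_n$ (taken verbatim from \cite[Lemma 4.1]{PiVe1}, with the weights \eqref{weigths_above} arranged to avoid double counting between adjacent strips) shows that the triple $\tilde x,\tilde x_+,\tilde x_-$ always contributes at least $2c_F$ to $E_{\rm strip}(x)$. Without this input your argument cannot close.

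Your treatment of the degenerate case $\tilde x=x$ has a related flaw: an empty column above $x$ does not force $E_{\rm loc}(x)$ to be large, because the above-row neighbours $x+\ttwo$ and $x+\ttwo-\tone$ lie in adjacent columns and may perfectly well belong to $D_n$; in that situation the needed $2c_F$ comes from the bonds that $\tilde x_\pm$ miss with the vacant site directly above $x$, not from $E_{\rm loc}(x)$. Two secondary remarks. First, your closing observation is correct and consistent with the paper: \eqref{dewetting_condition_Z1} is not used to derive the inequality itself, only to make $\Delta_{\rm strip}$ positive, which is what the global bound \eqref{Vn_lower_bound} requires. Second, your matching of weight triples to geometries is off: the threshold $4c_F-c_S$ belongs to the ``every site bonded'' case $r=1$, $q=2$ (the model $\mathcal{M}^0_\Lambda(1,1)$), whose natural weights are $\left(\tfrac12,\tfrac14,\tfrac14\right)$, while the separated-pairs case $r=1$, $q>2$ with threshold $5c_F-c_S$ is the one requiring weights of the type $\left(\tfrac12,\tfrac12,\tfrac12\right)$ so that the two strips centred at a bonded pair do not double count each other's centre in \eqref{energy_strip}; keeping this correspondence straight is essential for both halves of the argument.
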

   \begin{proof}
\FFF Fix \EEE $x\in D_n\cap\partial \mathcal{L}_{FS}$. 
We begin by observing that the strip center $x$ surely misses the bonds with the atoms missing at the 2 positions $x-\ttwo+k\tone$ for $k=0,1$ \FFF as shown in Figure \ref{fig:strip}\EEE. Furthermore, either $x$ misses the bond with $x_{-}$, or $x_{-}\in {\color{black} D_n} $  and $x_{-}$ misses the bonds with the 2 positions $x-\ttwo+k\tone$ for $k=-1,0$ (which in the strip energy are counted with half weights). We can reason similarly for $x_{+}$. Therefore, by the definition of energy of the low strip $E_{\rm strip,below}$,  we obtain that 
$$
E_{\rm strip,below}\FFF \geq\EEE\begin{cases}
4c_{F}-c_{S} & \text{if $z \in Z_S^1$ with $r>1$ (and so $q>2$),}\\
3c_{F}-c_{S}  & \text{if $z \in Z_S^1$ with $r=1$ and $q>2$}, \\
2c_{F}-c_{S}   &  \text{if $z \in Z_S^1$ with $r=1$ and $q=2$}.
\end{cases}
$$


The analysis of $E_{strip, above}$ is exactly the same as in \cite[Lemma 4.1]{PiVe1} since  $E_{strip, above}$ for $\mathcal{M}^1z$ for $z\in Z_S^1$ is defined as for $\mathcal{M}^0$. In particular,  the terms related to the triple $\tilde{x}$, $\tilde{x}_+$, and $\tilde{x}_-$ give always a contribution in the strip energy $E_{\rm strip}$ of at least $2c_F$.

   \end{proof}

In the following result by employing a similar non-local \emph{strip argument} of \cite[Lemma 4.2]{PiVe1} related to the more involved definition of the strip $\mathcal{S}$ of Section \ref{local_energy} we can show that the energy $V_n(D_n)$ of any crystalline configuration $D_n$ is bounded from below by  $-6c_Fn$ plus a positive \emph{deficit} due to the contribution of the boundary of $D_n$, i.e., where atoms have less than 6 film bonds \PPP and \EEE could have a bond with the substrate.  

 \begin{lemma}\label{strip}
 If \eqref{dewetting_condition_Z1} holds, then there exists $\Delta>0$ such that
\begin{equation}\label{Vn_lower_bound}
 V_n(D_n)\geq -6c_Fn\,+\,\Delta\#\partial D_n
\end{equation}
 for every crystalline configuration $D_n\subset\mathcal{L}_F$. \PPP Furthermore, the following two assertions are equivalent:
 \begin{itemize}
  \item[(i)] There exists a constant $C>0$ such that $\# \partial D_n \leq C \sqrt n$ for every $n \in \mathbb{N}$,
 \item[(ii)]  There exists a constant $C'>0$ such that $E_n (\mu_{D_n}) \leq C'$ for every $n \in \mathbb{N}$.
 \end{itemize}

  \end{lemma}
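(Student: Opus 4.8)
The plan is to reduce the estimate to the per-site bookkeeping already prepared in Lemma \ref{strip_local}. The starting point is the algebraic identity obtained by summing local energies: since $v_{FF}$ only produces nearest-neighbour bonds and each atom of $\mathcal{L}_F$ has at most $6$ of them, $\sum_{x\in D_n}E_{\rm loc}(x)=\sum_{i\neq j}v_{FF}(|x_i-x_j|)+6c_Fn$, so that, recalling \eqref{deficiency} and that in $\mathcal{M}^1_{\Lambda}(z)$ each atom of $\partial\mathcal{L}_{FS}$ carries exactly one substrate bond (hence $v^1\equiv-c_S$ on $D_n\cap\partial\mathcal{L}_{FS}$),
\[
V_n(D_n)+6c_Fn=\sum_{x\in D_n}E_{\rm def}(x)=\sum_{x\in D_n}E_{\rm loc}(x)-c_S\,\#\big(D_n\cap\partial\mathcal{L}_{FS}\big).
\]
Writing $\mathcal{Z}:=D_n\cap\partial\mathcal{L}_{FS}$ for the set of strip centres, the goal becomes to bound the right-hand side below by $\Delta\,\#\partial D_n$.

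The key step is a weight-accounting for the strips. Denote by $c(x,y)\ge0$ the weight with which $E_{\rm loc}(y)$ enters $E_{\rm strip}(x)$, so that $C(y):=\sum_{x\in\mathcal{Z}}c(x,y)$ is the total coefficient of $E_{\rm loc}(y)$ in $\sum_{x\in\mathcal{Z}}E_{\rm strip}(x)$. Expanding \eqref{Estrip}, I would establish
\[
\sum_{x\in\mathcal{Z}}E_{\rm strip}(x)=\sum_{y\in D_n}C(y)\,E_{\rm loc}(y)-c_S\,\#\mathcal{Z},\qquad C(y)\le1\ \text{for every }y.
\]
The content here is precisely $C(y)\le1$: the weights $w,w_{\pm}$ in \eqref{Eweight_below1}--\eqref{Eweight_below2} and $w_{\pm}(\tilde x)$ in \eqref{weigths_above} are designed so that overlapping strips never over-count a site, the delicate cases being the shared ``above'' atoms, where $\tilde x_{\pm}=\widetilde{(x_{\pm})}_{\mp}$ forces weight $1/2$, and the shared lower sides $x_{\pm}$ of adjacent strip centres. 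Verifying $C(y)\le1$ through this geometric case distinction is the main obstacle, and it is what the intricate weight definition is tailored to guarantee. Granting it, since $E_{\rm loc}\ge0$ we subtract and obtain the clean decomposition
\[
V_n(D_n)+6c_Fn=\sum_{x\in\mathcal{Z}}E_{\rm strip}(x)+\sum_{y\in D_n}(1-C(y))\,E_{\rm loc}(y)\ \ge\ \Delta_{\rm strip}\,\#\mathcal{Z}+\sum_{y\in D_n}(1-C(y))\,E_{\rm loc}(y),
\]
where the inequality is Lemma \ref{strip_local} and all displayed terms are nonnegative.

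It remains to convert this into a bound by the full count $\#\partial D_n$. Every boundary atom $y$ satisfies $E_{\rm loc}(y)\ge c_F$, and falls into one of two classes according to whether $C(y)=1$ or $C(y)<1$. Atoms of the first class appear in at least one strip, and since each $\mathcal{S}(x)$ contains at most $6$ atoms their number is at most $6\,\#\mathcal{Z}$, so they are controlled by $\Delta_{\rm strip}\,\#\mathcal{Z}\ge\frac{\Delta_{\rm strip}}{6}\,\#\{y\in\partial D_n:\,C(y)=1\}$; atoms of the second class have $1-C(y)\ge\frac14$ (the weights being multiples of $\tfrac14$ bounded by $1$), whence $(1-C(y))E_{\rm loc}(y)\ge\frac14 c_F$. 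Summing the two contributions gives $V_n(D_n)+6c_Fn\ge\Delta\,\#\partial D_n$ with $\Delta:=\min\{\Delta_{\rm strip}/6,\,c_F/4\}>0$, which is \eqref{Vn_lower_bound}.

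Finally, the equivalence of (i) and (ii) follows by pairing \eqref{Vn_lower_bound} with the trivial upper bound $\sum_{x\in D_n}E_{\rm loc}(x)\le6c_F\,\#\partial D_n$ (interior atoms contribute $0$, boundary atoms at most $6c_F$), which together yield the two-sided estimate
\[
\Delta\,\#\partial D_n\ \le\ V_n(D_n)+6c_Fn\ \le\ 6c_F\,\#\partial D_n.
\]
Since $I_{n,\Lambda,x_F}(\mu_{D_n})=V_{n,\Lambda}(D_n)$ by \eqref{energy_equivalence}, definition \eqref{converging_energies00} gives $E_{n,\Lambda,x_F}(\mu_{D_n})=n^{-1/2}(V_n(D_n)+6c_Fn)$. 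Hence the lower bound turns $E_n(\mu_{D_n})\le C'$ into $\#\partial D_n\le(C'/\Delta)\sqrt n$, proving (ii)$\Rightarrow$(i), while the upper bound turns $\#\partial D_n\le C\sqrt n$ into $E_n(\mu_{D_n})\le6c_F\,(\#\partial D_n/\sqrt n)\le6c_FC$, proving (i)$\Rightarrow$(ii).
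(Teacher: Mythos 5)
Your proof is correct and follows essentially the same route as the paper's: the same starting identity $V_n(D_n)+6c_Fn=\sum_{x\in D_n}E_{\rm loc}(x)+\sum_{x\in D_n}v^1(x)$, the same reduction to a sum of strip energies via the no-overcounting property of the weights (which the paper likewise asserts as following from the ``careful choice of the weights'' and defers to \cite[Lemma 4.2]{PiVe1} rather than verifying in detail), Lemma \ref{strip_local} for the per-strip lower bound $\Delta_{\rm strip}$, the bound $\#\mathcal{S}(\partial\mathcal{L}_{FS})\leq 6\,\#(D_n\cap\partial\mathcal{L}_{FS})$, and the identical two-sided estimate $\Delta\#\partial D_n\leq V_n(D_n)+6c_Fn\leq 6c_F\#\partial D_n$ for the equivalence of (i) and (ii). The only difference is bookkeeping: you split boundary atoms according to $C(y)=1$ versus $C(y)<1$ (using that the weights are multiples of $1/4$), obtaining $\Delta=\min\{\Delta_{\rm strip}/6,\,c_F/4\}$, whereas the paper splits them by membership in $\mathcal{S}(\partial\mathcal{L}_{FS})$ and obtains $\Delta=\min\{\Delta_{\rm strip}/6,\,c_F\}$; both are valid.
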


 \begin{proof}
The proof is analogous to the proof of \cite[Lemma 4.2]{PiVe1} where instead of \cite[Lemma 4.1]{PiVe1} we apply Lemma \ref{strip_local}. In fact, by \eqref{Eloc} and \eqref{Estrip} we observe that also by  the careful choice of the weights \eqref{Eweight_below1}-\eqref{Eweight_below3} in \eqref{Estrip_below}, besides of the choice of weights  \eqref{weigths_above} in \eqref{Estrip_above}, we have that 
\begin{align}
6c_Fn\,+\,
V_n(D_n)\,&=\, \sum_{x\in D_n}\left(\sum_{y\in {\color{black} D_n}\setminus\{x\}} v_{FF}(|x-y|)+ 6c_F
\right)\,+\,\sum_{x\in D_n}  v^1(x)\notag\\
&=\, \sum_{x\in D_n} E_{\rm loc}(x)\,+\,\sum_{x\in D_n}  v^1(x)\notag\\
&\geq \, \sum_{x\in D_n\cap {\color{black} \partial \mathcal{L}_{FS}}} E_{\rm strip}(x)\,+\,\sum_{x\in D_n\setminus\mathcal{S}({\color{black} \partial \mathcal{L}_{FS}})} E_{\rm loc}(x). \label{energy_strip}
\end{align}
with 
\begin{equation}\label{layer_strip}
\mathcal{S}(\partial \mathcal{L}_{FS})=\mathcal{S}_{\MMM D_n\EEE}({\color{black} \partial \mathcal{L}_{FS}}):=\{y\in\mathcal{S}(x)\,:\,x\in D_n\cap{\color{black} \partial \mathcal{L}_{FS}} \},
\end{equation}
where we used that  $v^1(x)=0$ for every $x\in D_n\setminus\partial \mathcal{L}_{FS}$, since  the local energy $E_{\rm loc}(x)$ of every film atom in $D_n\cap {\color{black} \partial \mathcal{L}_{FS}}$  is counted {\color{black} at most once}.

 \end{proof}

\PPP In view of the previous lower bound for the energy of a configuration $D_n$ we are  able to compensate the negative contribution coming at the boundary from the interaction with the substrate obtaining the following compactness results.  

 \begin{proposition} \label{zadnje} 
\PPP Assume that \eqref{dewetting_condition_Z1} holds. Let  $D_n\in\mathcal{C}_n$ be almost-connected configurations such that
\begin{equation}\label{upperbound} 
V_n(D_n)\leq -6c_Fn +Cn^{1/2}
\end{equation}
for a constant $C>0$. Then there exist an increasing sequence $n_r$, $r\in\Nz$, and a measure $\mu\in \mathcal{M}(\Rz^2)$ with $\mu\geq0$ and $\mu(\Rz^2)=1$  such that   $\mu_{r}\stackrel{*}{\rightharpoonup} \mu$ in $\mathcal{M}(\Rz^2)$, where $\mu_{r}:=\mu_{D_{n_r}\PPP(\,\cdot\,+a_{n_r})}$ for some translations $a_n\in\Rz^2$ \EEE \emph{(}see  \eqref{empiricalmeasures} for the definition of  the empirical measures $\mu_{D_{n_r}}$\emph{)}. \PPP Moreover, if $D_n\in\mathcal{C}_n$ are minimizers of $V_n$ in $\mathcal{C}_n$, then we can choose $a_n=t_n\tone$ for integers $t_n\in\Zz$. \EEE
 \end{proposition}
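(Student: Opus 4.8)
The plan is to convert the energy upper bound into a uniform bound on the number of boundary atoms, then into a uniform bound on the \emph{rescaled} diameter of the configurations, and finally to invoke the standard weak* compactness of probability measures supported in a fixed compact set. First I would combine the two energy estimates at my disposal: Lemma \ref{strip} gives $V_n(D_n)\geq -6c_Fn+\Delta\#\partial D_n$ with $\Delta>0$ (this is where the more involved strip of Section \ref{local_energy}, through Lemma \ref{strip_local}, is crucial to keep $\Delta>0$ under \eqref{dewetting_condition_Z1}), while the hypothesis \eqref{upperbound} gives $V_n(D_n)\leq -6c_Fn+Cn^{1/2}$. Cancelling the common $-6c_Fn$ and dividing by $\Delta$ yields the uniform perimeter bound
$$
\#\partial D_n\leq \frac{C}{\Delta}\,n^{1/2}=:C_0\,n^{1/2}
$$
for every $n$, which is exactly assertion (i) of Lemma \ref{strip}.

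Next I would upgrade this to a diameter bound. The key geometric input is an isoperimetric-type estimate in the triangular lattice, as used in \cite{Yeung-et-al12,PiVe1}: for any \emph{connected} configuration $D$ one has $\operatorname{diam}(D)\leq c\,\#\partial D$ for a universal constant $c$ (the horizontal and vertical extents are controlled by the number of boundary atoms). For an almost-connected configuration I would work with its connected components $D_n^1,\dots,D_n^{k}$: since each nonempty component contributes at least one atom to $\partial D_n$ and the boundaries are disjoint, their number satisfies $k\leq\#\partial D_n\leq C_0 n^{1/2}$, and by almost-connectedness (Section \ref{sec:almost_connected}) the components can be enumerated so that each lies within distance $q$ of the union of the previous ones. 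Hence the whole configuration sits in a chain of clusters separated by gaps of length at most $q$, giving
$$
\operatorname{diam}(D_n)\leq \sum_{i=1}^{k}\operatorname{diam}(D_n^i)+(k-1)q\leq c\,\#\partial D_n+q\,\#\partial D_n\leq C_1\,n^{1/2}.
$$
This passage from the componentwise isoperimetric bound to a global diameter bound for almost-connected configurations with bounded gaps is the technical heart, and the step I expect to be the main obstacle.

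With the diameter bound the rest is soft. After rescaling by $\sqrt n$ the set $\tfrac{1}{\sqrt n}D_n$ has diameter $\leq C_1$, and since its lowest atoms sit at height $\geq0$ it is vertically confined; choosing $a_n\in\Rz^2$ so that the leftmost atom of the rescaled configuration is moved into a fixed unit window, the translated rescaled configurations all lie in a single compact set $K\subset\Rz^2$. Consequently the empirical measures $\mu_r:=\mu_{D_{n_r}(\,\cdot\,+a_{n_r})}$ are probability measures, i.e.\ $\mu_r\geq0$ and $\mu_r(\Rz^2)=\mu_r(K)=1$, supported in $K$. By Banach--Alaoglu (Prokhorov) there exist an increasing sequence $n_r$ and $\mu\in\mathcal{M}(\Rz^2)$ with $\mu_r\stackrel{*}{\rightharpoonup}\mu$; since all $\mu_r$ are supported in the \emph{fixed} compact $K$, the limit satisfies $\operatorname{supp}\mu\subset K$, $\mu\geq0$, and $\mu(\Rz^2)=\lim_r\mu_r(\Rz^2)=1$, so no mass escapes to infinity.

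Finally, for the statement about minimizers I would refine $a_n$ to an integer multiple of $\tone$. By properties (i)--(ii) of the transformation $\mathcal{T}$ in Definition \ref{transformation} a minimizer is almost-connected and each of its connected components carries a substrate bond, so it is vertically anchored at height $e_{FS}$ and no vertical translation is needed. Since $\tone$ has length $1$ and a translation by $t\tone$ with $t\in\Zz$ maps $\mathcal{L}_F$ into itself, I would choose $t_n\in\Zz$ so that the horizontal coordinate of the leftmost atom of $D_n+t_n\tone$ lands in a fixed unit window; the residual horizontal displacement is then $O(1)$ and hence negligible after dividing by $\sqrt n$. Thus $D_n+t_n\tone$ is, after rescaling, again confined to a fixed compact set, and the compactness argument above applies verbatim with $a_n=t_n\tone$.
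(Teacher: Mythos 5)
Your proof is correct and takes essentially the same route as the paper, whose own proof simply defers to \cite[Proposition 4.3]{PiVe1} with Lemma \ref{strip} replacing \cite[Lemma 4.2]{PiVe1}: the boundary-count bound obtained by matching the energy upper and lower bounds, the diameter bound via the componentwise isoperimetric estimate combined with the almost-connected chaining across gaps of size at most $q$, and weak* compactness of probability measures confined to a fixed compact set are exactly the strategy being invoked. One small correction: the fact that each connected component of a minimizer is bonded to the substrate (hence vertically anchored, so that an integer horizontal translation $a_n=t_n\tone$ suffices) follows from property (iv) of Definition \ref{transformation}, i.e.\ $\mathcal{T}_1(D_n)=D_n$, not from properties (i)--(ii), which concern the transformed configuration $\mathcal{T}(D_n)$ rather than $D_n$ itself.
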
 
 
 \begin{proof}
 The proof follows the same strategies employed for the proof of \cite[Proposition 4.3]{PiVe1}, where instead of applying \cite[Lemma 4.2]{PiVe1} we apply the previously proven Lemma \ref{strip}. 
 \end{proof}

\PPP  We are now ready to state the following compactness result  which is a consequence of Proposition \ref{zadnje}  and of the definition of transformed configuration \PPP $\mathcal{T}(D_n)$ associated to a configuration $D_n$  provided in Definition \ref{transformation} (see also  (see also \cite[Theorem 4.1]{PiVe1} and  \cite[Theorem 1.1]{Yeung-et-al12}).  


\EEE

 \begin{theorem}[Compactness] \label{compactnesstheorem}
 Assume that \eqref{dewetting_condition_Z1} holds.  Let $D_n\in\mathcal{C}_n$ be configurations  satisfying \eqref{upperbound} and \PPP let \EEE $\mu_{n}:=\mu_{\mathcal{T}(D_n)}$ \PPP be \EEE the empirical measures associated to the transformed configurations \PPP $\mathcal{T}(D_n)\in\mathcal{C}_n$ associated to $D_n$  by Definition \ref{transformation}. \EEE  Then, \PPP up to translations $($i.e., up to replacing $\mu_{n}$ by  $\mu_{n}(\cdot+a_n)$ for some $a_n\in\Rz^2$$)$ and a passage to a non-relabelled subsequence,  $\mu_{n}$ converges \SSS weakly* \BBB  in $\mathcal{M}(\mathbb{R}^2)$ to a measure $\mu\in\mathcal{M}_W$, where	$\mathcal{M}_W$ is defined in \eqref{M_Wulff}. \PPP Furthermore, if $D_n\in\mathcal{C}_n$ are minimizers of $V_n$ in $\mathcal{C}_n$, then we can choose $a_n=t_n\tone$ for integers $t_n\in\Zz$. 
\end{theorem}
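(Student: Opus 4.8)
The plan is to reduce to Proposition~\ref{zadnje} applied to the transformed configurations, and then to upgrade the resulting limit to an element of $\mathcal{M}_W$ via the finite-perimeter compactness argument of \cite{Yeung-et-al12,PiVe1}. First I would check that $\mathcal{T}(D_n)$ is admissible for Proposition~\ref{zadnje}: property~(iii) of Definition~\ref{transformation} gives $V_n(\mathcal{T}(D_n))\leq V_n(D_n)\leq-6c_Fn+Cn^{1/2}$, so $\mathcal{T}(D_n)$ again satisfies \eqref{upperbound}, while property~(i) guarantees it is almost connected. Proposition~\ref{zadnje} then produces an increasing sequence $n_r$, translations $a_{n_r}\in\Rz^2$, and a measure $\mu\in\mathcal{M}(\Rz^2)$ with $\mu\geq0$ and $\mu(\Rz^2)=1$ such that $\mu_{n_r}(\cdot+a_{n_r})\stackrel{*}{\rightharpoonup}\mu$, where $\mu_n:=\mu_{\mathcal{T}(D_n)}$.

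It remains to show $\mu\in\mathcal{M}_W$, i.e.\ that $\mu=\rho\chi_D$ for a \emph{bounded} set of finite perimeter $D$ with $|D|=1/\rho$; this is the genuine content beyond Proposition~\ref{zadnje}. The starting point is the boundary estimate obtained by feeding \eqref{upperbound} into the lower bound \eqref{Vn_lower_bound} of Lemma~\ref{strip} for $\mathcal{T}(D_n)$, namely $\Delta\,\#\partial\mathcal{T}(D_n)\leq V_n(\mathcal{T}(D_n))+6c_Fn\leq Cn^{1/2}$, whence $\#\partial\mathcal{T}(D_n)\leq C'\sqrt n$. I would then attach to each atom its hexagonal Voronoi cell of area $1/\rho$, take the union $A_n$ of these cells, and set $\widetilde A_n:=\tfrac{1}{\sqrt n}A_n$, so that $|\widetilde A_n|=1/\rho$ and the perimeter of $\widetilde A_n$ is controlled by $\#\partial\mathcal{T}(D_n)/\sqrt n\leq C'$. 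Having equibounded perimeters, BV-compactness yields, along a further subsequence, $\chi_{\widetilde A_n}\to\chi_D$ in $L^1$ with $D$ of finite perimeter; and since each rescaled cell has diameter $O(1/\sqrt n)$ and contains the corresponding scaled atom, the empirical measure $\mu_n$ and $\rho\chi_{\widetilde A_n}$ have the same weak* limit, so $\mu=\rho\chi_D$. The normalization $\mu(\Rz^2)=1$ then forces $\rho|D|=1$.

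The hard part will be the \emph{boundedness} of $D$, exactly the feature separating $\mathcal{M}_W$ from the bare positivity and unit mass of Proposition~\ref{zadnje}. Here I would use almost-connectedness: the diameter of each connected component is controlled by the length of its boundary path, so $\#\partial\mathcal{T}(D_n)\leq C'\sqrt n$ together with the fact that components sit within mutual distance $q$ confines, after the horizontal adjustment built into $a_n$, the whole rescaled configuration to a fixed ball; this bounds $\operatorname{supp}\mu$ and makes $D$ bounded, as in \cite[Theorem~4.1]{PiVe1} and \cite[Theorem~1.1]{Yeung-et-al12}. The one point needing care relative to \cite{PiVe1} is that in $\mathcal{M}^1_{\Lambda}(z)$ one must allow separated substrate-bonded pairs, but this is already absorbed into the strip $\mathcal{S}$ and the deficit $\Delta$ of Lemma~\ref{strip}, so the boundary and diameter bounds transfer verbatim. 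Finally, for minimizers I would invoke properties~(iv) and~(v): then $\mathcal{T}_1(D_n)=D_n$ and $\mathcal{T}=\mathcal{T}_2$ acts by integer horizontal translations of the almost-connected components, and since Proposition~\ref{zadnje} delivers $a_n=t_n\tone$ with $t_n\in\Zz$ in this case, the total translation applied to $\mu_{\mathcal{T}(D_n)}$ is purely horizontal with integer coefficient, giving the last assertion.
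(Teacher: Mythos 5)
Your proposal is correct and follows essentially the same route as the paper, which states this theorem precisely as a consequence of Proposition~\ref{zadnje} applied to the transformed configurations $\mathcal{T}(D_n)$ (admissible by properties~(i) and~(iii) of Definition~\ref{transformation}) together with the finite-perimeter and boundedness arguments of \cite[Theorem 4.1]{PiVe1} and \cite[Theorem 1.1]{Yeung-et-al12}, whose details (boundary bound via Lemma~\ref{strip}, Voronoi cells, BV-compactness, diameter control through almost-connectedness, and the minimizer case via the last assertion of Proposition~\ref{zadnje}) you have filled in faithfully. The only cosmetic remark is that the uniform confinement of the rescaled configurations to a fixed ball should logically precede the claim of $L^1$ (rather than merely $L^1_{\rm loc}$) convergence of $\chi_{\widetilde A_n}$, but all the needed ingredients are present in your argument.
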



The next results allows to overcome the issue of compactness for not almost-connected configurations (without using associated transformed configurations as in Theorem \ref{compactnesstheorem}),  as it shows that mass is preserved in the limit by carefully selecting connected components of minimizers.

\begin{theorem}[Mass conservation] \label{connectness_Z1} 
\PPP Assume that \eqref{dewetting_condition_Z1} holds. If $ \widehat{D}_n$ are  minimizers of $V_{n,\Lambda}$ among all crystalline configurations in $\mathcal{C}_n$, i.e.,  
$$V_{n,\Lambda}(\widehat{D}_n)=\min_{D_n\in\mathcal{C}_n}{V_{n,\Lambda}(D_n)},$$
and we select for every $\widehat{D}_n$ a connected component $\widehat{D}_{n,1}\subset \widehat{D}_n$ with  largest cardinality, then
$$
\lim_{n \to \infty}{\mu_{\widehat{D}_n} (\widehat{D}_{n,1})}=1, 
$$
where $\mu_{\widehat{D}_n}$ are the empirical measure associated to $\widehat{D}_n$ defined by \eqref{empiricalmeasures}. \EEE
\end{theorem}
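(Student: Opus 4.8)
The plan is to derive the statement from sharp asymptotics for the minimal energy together with the strict superadditivity $\sqrt{\alpha}+\sqrt{1-\alpha}>1$ valid for $\alpha\in(0,1)$. Throughout set $\theta_m:=\min_{D_m\in\mathcal{C}_m}V_m(D_m)$ and let $e(m):=\theta_m+6c_Fm\ge0$ be the \emph{excess energy}, which by \eqref{converging_energies00}, \eqref{energy_equivalence} satisfies $e(m)=\sqrt{m}\,E_{m,\Lambda,x_F}(\mu_{\widehat D_m})$ for any minimizer $\widehat D_m$ of $V_m$. The first observation I would use is the additive structure of $V_n$ over connected components: since $v_{FF}$ couples only nearest neighbours, distinct connected components of $\widehat D_n$ share no film bonds, and since $E_S$ is a sum of one-body terms, $V_n$ splits as a sum over the connected components $C_1,\dots,C_k$ of $\widehat D_n$, ordered so that $n_1:=\#C_1\ge\cdots\ge n_k:=\#C_k$ and $C_1=\widehat D_{n,1}$, whence $\mu_{\widehat D_n}(\widehat D_{n,1})=n_1/n$. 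As $C_1$ and $\widehat D_n\setminus C_1$ are admissible configurations of $n_1$ and $n-n_1$ atoms, comparing their energies with the corresponding minima gives
\[
e(n)=\sum_{j=1}^{k}\big(V_{n_j}(C_j)+6c_Fn_j\big)\ge e(n_1)+e(n-n_1).
\]

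Next I would secure that the largest component is already macroscopic. Applying Lemma \ref{strip} component-wise gives $V_{n_j}(C_j)+6c_Fn_j\ge\Delta\,\#\partial C_j$, and the discrete isoperimetric inequality $\#\partial C_j\ge c_0\sqrt{n_j}$ for connected configurations yields $e(n)\ge\Delta c_0\sum_j\sqrt{n_j}$. Combined with the upper bound $e(n)\le C\sqrt n$ (established in the next step) and with $\sum_j\sqrt{n_j}\ge\sum_j n_j/\sqrt{n_1}=n/\sqrt{n_1}$, this forces $n_1\ge\alpha_0\,n$ for a fixed $\alpha_0>0$; in particular $n_1\to\infty$. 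I would then establish the sharp asymptotics $e(m)/\sqrt m\to W:=\min\{\mathcal{E}_{\sigma}(D):D\subset\Rz^2\setminus S,\ |D|=1/\rho\}$, with $\mathcal{E}_\sigma$ as in \eqref{continuum} for the effective adhesivity $\sigma$ of the model $\mathcal{M}^1_\Lambda(z)$. For the upper bound one builds an explicit competitor, a discrete Winterbottom island resting on $\partial\mathcal{L}_{FS}$ (a large discrete Wulff shape intersected with a half-plane), whose rescaled energy converges to $W$, so that minimality gives $\limsup_m e(m)/\sqrt m\le W$. For the lower bound I would pass to the transformed minimizer $\mathcal{T}(\widehat D_m)$ of Definition \ref{transformation}, which by property (iii) has the same minimal energy $\theta_m$ and by property (i) is almost connected; Theorem \ref{compactnesstheorem} then yields $\mu_{\mathcal{T}(\widehat D_m)}\stackrel{*}{\rightharpoonup}\rho\chi_D\in\mathcal{M}_W$ up to translation, and the surface-energy lower bound underlying it gives $\liminf_m e(m)/\sqrt m\ge\mathcal{E}_{\sigma}(D)\ge W$. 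Hence $e(m)/\sqrt m\to W$, and one checks $W>0$ in the dewetting regime \eqref{dewetting_condition_Z1}.

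Finally I would argue by contradiction. If the conclusion failed, there would be a subsequence along which $n_1/n\to\alpha$, and by the previous paragraph $\alpha\in(0,1)$, so both $n_1\to\infty$ and $n-n_1\to\infty$. Dividing $e(n)\ge e(n_1)+e(n-n_1)$ by $\sqrt n$ and letting $n\to\infty$ along this subsequence, the sharp asymptotics give $W\ge W\big(\sqrt{\alpha}+\sqrt{1-\alpha}\big)$, contradicting $W>0$ and $\sqrt{\alpha}+\sqrt{1-\alpha}>1$. Therefore $n_1/n=\mu_{\widehat D_n}(\widehat D_{n,1})\to1$, as claimed.

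I expect the genuine difficulty to lie entirely in the matching lower bound $\liminf_m e(m)/\sqrt m\ge W$ (equivalently, the strict subadditivity of $m\mapsto e(m)$ and the positivity $W>0$). One must localize the film surface energy and correctly account for the substrate adhesion along the wall; most delicately, one must ensure that the ``seam'' between two almost-connected but non-bonded components, whose missing film bonds are invisible in the continuum limit set $D$, is nonetheless charged to the discrete energy. This is precisely the role of the refined strip energy of Section \ref{local_energy} and of the careful weights \eqref{Eweight_below1}--\eqref{weigths_above}, and it is the one point where the present auxiliary model genuinely departs from the estimates of the rigid setting.
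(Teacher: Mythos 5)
Your proof is correct, and it rests on the same pillars as the paper's: the $\Gamma$-convergence statement (Assertion 1 of Theorem \ref{thm:convergence_minimizers_Z1}), the compactness theorem for transformed configurations (Theorem \ref{compactnesstheorem}), and a scaling argument exploiting $\sqrt{\alpha}+\sqrt{1-\alpha}>1$. The difference is where the contradiction is run. The paper argues at the continuum level: if the largest component of the minimizers lost a definite fraction of mass, one could produce in the limit a disconnected minimizer of $I_{\infty,4}$, which is absurd because the minimal continuum energy at volume $v$ scales like $\sqrt{v}$. You argue at the discrete level: superadditivity $e(n)\ge e(n_1)+e(n-n_1)$ of the excess minimal energy over connected components, the sharp asymptotics $e(m)/\sqrt{m}\to W$ (whose liminf half you obtain exactly as the paper would, by feeding the transformed minimizers $\mathcal{T}(\widehat{D}_m)$ into Theorem \ref{compactnesstheorem} and the $\Gamma$-liminf inequality), and the positivity $W>0$ together give $W\ge W\left(\sqrt{\alpha}+\sqrt{1-\alpha}\right)$, a contradiction. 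Your route avoids assembling the disconnected limit object (separating the component groups by energy-preserving translations and checking minimality of the limit); in exchange it needs the full-limit asymptotics of the minimal energies, the explicit positivity $W>0$ (true under \eqref{dewetting_condition_Z1}, since then $\sigma_4>-2c_F=-\Gamma(\tthree)$ and the wall term can be absorbed by projecting $\partial^*D\setminus\partial S$ onto $\partial S$), and the preliminary macroscopicity bound $n_1\ge\alpha_0 n$ from Lemma \ref{strip} plus the discrete isoperimetric inequality, which you correctly supply in order to rule out $\alpha=0$. One technicality you should make explicit in the liminf step: the translations that Theorem \ref{compactnesstheorem} provides for minimizers are integer multiples of $\tone$, which need not preserve the substrate energy in the model $\mathcal{M}^1_{\Lambda}(z)$; round them to multiples of $q\tone$ (the discrepancy is $O(q/\sqrt{n})$ and does not change the weak* limit), so that the translated configurations are still minimizers before the $\Gamma$-liminf inequality is applied to them.
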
	

The proof of  Theorem \ref{connectness_Z1} is exactly the same as for the analogous   \cite[Theorem 2.3]{PiVe1} and it actually depends on the first two assertions of the following theorem (whose statement we postpone below as it requires some adjustment in the proof): first one establishes the $\Gamma$-convergence result (see Assertion 1 of Theorem \ref{thm:convergence_minimizers_Z1}), which together  with Theorem \ref{compactnesstheorem} implies the existence of minimizer  (see Assertion 2 of Theorem \ref{thm:convergence_minimizers_Z1}) for the  limiting functional $I_{\infty,4}$ defined in \eqref{I4}, then, one shows that it is impossible for a sequence  of  minimizers of $E_{n,\Lambda,x_F}$ to have a subsequence of disconnected  components with significant mass, since this  would imply that there exists a disconnected minimizer of $I_{\infty,4}$, which it is an absurd by scaling arguments.
\BBB



\begin{theorem}[Convergence of Minimizers]\label{thm:convergence_minimizers_Z1}
	\PPP Assume  that \eqref{dewetting_condition_Z1} holds. The following statements hold:
	\begin{itemize}
	\item[1.] The functional 
	\begin{equation}\label{converging_energies00}
	E_{n,\Lambda,x_F}:= n^{-1/2}(I_{n,\Lambda,x_F}+6c_F n),
	\end{equation}
	where $I_{n,\Lambda,x_F}$ is defined by \eqref{radon_functional}, $\Gamma$-converges with respect to the weak* convergence of measures to the functional $I_{\infty,4}$ defined by
		\begin{equation}\label{I4}
	I_{\infty,4}(\mu):=\begin{cases}  \mathcal{E}_{\sigma_4}(D_\mu), &  \text{if \FFF there exists \EEE $D_\mu\subset\Rz^2\setminus S$  set of finite perimeter}\\
	& \hspace{15ex} \text{with  $|D_\mu|=1/\rho$ such that $\mu=\rho\chi_{D_\mu}$,}
\\
	+\infty, &\text{otherwise,}\\
	\end{cases}
	\end{equation}
	for every $\mu\in\mathcal{M}(\Rz^2)$,
where $\rho:=2/\sqrt{3}$ and $\mathcal{E}_{\sigma_4}$ is defined in \eqref{continuum} for an adhesivity $\sigma:=\sigma_4$ where
\begin{equation}\label{sigma4}
\sigma_{4}:=2c_F-\displaystyle\frac{2c_S}{q}.
\end{equation}

	\item[2.]  The functional $I_{\infty,4}$ admits a minimizer in 
	\begin{align}
\mathcal{M}_W:=\bigg\{\mu\in\mathcal{M}(\Rz^2)\ :\ \text{$\exists$ $D\subset\Rz^2\setminus S$} & \hspace{2ex} \text{set of finite  perimeter, bounded }\notag\\
\hspace{8ex} \text{with }  |D|=\frac{1}{\rho},\label{M_Wulff}	& \hspace{2ex} \text{ and such that $\mu=\rho\chi_D$}\bigg\}. 
	\end{align}
	\item[3.]  Every sequence $\mu_n\in\mathcal{M}_n$ of minimizers of $E_{n,\Lambda,x_F}$ 
admits, up to translation 
a subsequence  converging  with respect to the weak* convergence of measures to a minimizer of $I_{\infty,4}$ in $\mathcal{M}_W$. 
	\end{itemize}
	\end{theorem}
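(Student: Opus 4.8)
The plan is to implement the $\Gamma$-convergence and compactness programme of \cite{Yeung-et-al12,PiVe1}, adjusting only the lower-bound ``boundary-averaging'' step to the auxiliary wall with $z\in Z_S^1$, where (for $r=1$ and $q>2$) the film atoms in contact with the substrate occur as separated neighboring pairs. Assertion~1 splits into the two $\Gamma$-convergence inequalities, both taken with respect to the weak* topology of $\mathcal{M}(\Rz^2)$; Assertions~2 and~3 then follow from the fundamental theorem of $\Gamma$-convergence together with the compactness of Theorem~\ref{compactnesstheorem} and the mass conservation of Theorem~\ref{connectness_Z1}.

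For the $\Gamma$-$\liminf$ inequality I would take $\mu_n\stackrel{*}{\rightharpoonup}\mu$ with $\liminf_n E_{n,\Lambda,x_F}(\mu_n)<\infty$ and write $\mu_n=\mu_{D_n}$. By Lemma~\ref{strip} a uniform energy bound forces $\#\partial D_n\le C\sqrt n$, which by the perimeter-compactness argument of \cite{Yeung-et-al12} identifies the limit as $\mu=\rho\chi_{D_\mu}$ for a set of finite perimeter $D_\mu\subset\Rz^2\setminus S$ with $|D_\mu|=1/\rho$ (otherwise the limit energy is $+\infty$ and nothing is to prove). The film--film part $n^{-1/2}\sum_x E_{\rm loc}(x)$ reconstructs the anisotropic perimeter $\int_{\partial^*D_\mu\setminus\partial S}\Gamma(\nu_{D_\mu})\,d\mathcal{H}^1$ exactly as in \cite{Yeung-et-al12}, by counting missing bonds at the free boundary together with their orientations and invoking the explicit form \eqref{gamma} of $\Gamma$. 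The genuinely new point is the contact term, where I would localise the interaction along $\partial\mathcal{L}_{FS}$ through the strip energy $E_{\rm strip}$ of Section~\ref{local_energy} (bounded below by Lemma~\ref{strip_local}) and then average over one horizontal period of the wall. Each period of length $q$ carries $q$ bottom-row atoms, each with a film-bond deficit $2c_F$ (the two downward bonds are always absent), of which exactly two are in contact and each contribute a substrate bond $-c_S$; averaging gives contact energy per unit length $(2c_F q-2c_S)/q=2c_F-2c_S/q=\sigma_4$ of \eqref{sigma4}, so that $\liminf_n E_{n,\Lambda,x_F}(\mu_n)\ge\mathcal{E}_{\sigma_4}(D_\mu)=I_{\infty,4}(\mu)$.

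For the $\Gamma$-$\limsup$ inequality I would build a recovery sequence for $\mu=\rho\chi_{D_\mu}\in\mathcal{M}_W$: by density of polygonal sets in the anisotropic-perimeter topology it suffices to treat $D_\mu$ with polygonal boundary, for which one places film atoms on $\mathcal{L}_F\cap\sqrt n\,D_\mu$ and rescales. Along the free boundary the missing-bond count matches $\int\Gamma(\nu_{D_\mu})$ up to $o(\sqrt n)$, while along $\partial S$ one fixes the bottom row so that the contact atoms realise the optimal separated-pair pattern of a wetting configuration, matching the substrate energy to $\sigma_4\,\mathcal{H}^1(\partial^*D_\mu\cap\partial S)+o(\sqrt n)$; a diagonal argument removes the polygonal approximation. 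Assertion~2 then follows by taking discrete minimizers, replacing them by their almost-connected transformed configurations $\mathcal{T}(D_n)$ (which do not raise the energy, property~(iii) after Definition~\ref{transformation}), applying Theorem~\ref{compactnesstheorem} to extract, up to horizontal translations and a subsequence, a weak* limit $\mu\in\mathcal{M}_W$, and using the $\liminf$ inequality to conclude that $\mu$ minimizes $I_{\infty,4}$.

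Finally, Assertion~3 is where the compactness gap for configurations that are not almost connected must be closed. Here I would invoke Theorem~\ref{connectness_Z1}: for discrete minimizers $\widehat D_n$ the largest connected component $\widehat D_{n,1}$ satisfies $\mu_{\widehat D_n}(\widehat D_{n,1})\to1$, so the mass does not split; being connected, $\widehat D_{n,1}$ is almost connected, and Theorem~\ref{compactnesstheorem} applies to it to give, up to translations $a_n=t_n\tone$ with $t_n\in\Zz$ and a subsequence, $\mu_{\widehat D_n}\stackrel{*}{\rightharpoonup}\mu\in\mathcal{M}_W$. Combining the $\liminf$ inequality with $I_{\infty,4}(\mu)\le\liminf_n E_{n,\Lambda,x_F}(\mu_{\widehat D_n})=\min I_{\infty,4}$ (the equality from Assertions~1--2) identifies $\mu$ as a minimizer. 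I expect the main obstacle to be the contact-region lower bound: disentangling, in the $z\in Z_S^1$ setting with separated pairs, the substrate binding energy from the anisotropic surface energy without double counting the bottom-row film-bond deficit, which is precisely what forces the more involved strip weights \eqref{Eweight_below1}--\eqref{Eweight_below3} and produces the value $\sigma_4=2c_F-2c_S/q$.
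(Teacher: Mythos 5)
Your overall architecture coincides with the paper's: Assertion~1 is proved by adapting the lower and upper bounds of \cite[Theorems 5.3 and 6.1]{PiVe1}, Assertion~2 follows from Assertion~1 together with the compactness of Theorem~\ref{compactnesstheorem}, and Assertion~3 uses Theorem~\ref{connectness_Z1}; your ordering also respects the fact that Theorem~\ref{connectness_Z1} itself depends only on Assertions~1--2, so there is no circularity. The recovery sequence and the identification of $\sigma_4=2c_F-2c_S/q$ through ``at most two substrate bonds per period of length $q$'' correspond exactly to the paper's first modification of \cite[Eq. (69)]{PiVe1}.

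However, your $\Gamma$-liminf argument has a genuine gap: it accounts only for the energy along the limiting contact set $\partial^*D_\mu\cap\partial S$ (by period averaging) and along the free boundary (by the missing-bond reconstruction of \cite{Yeung-et-al12}), but it never rules out asymptotically negative contributions from the rest of the wall. A competing sequence $\mu_n\stackrel{*}{\rightharpoonup}\rho\chi_{D_\mu}$ may carry thin wetting layers of substrate-bonded atoms along portions of $\{x_2=0\}$ on which $D_\mu$ has no trace; these atoms have vanishing mass in the limit but each gains $-c_S$, so without further argument the liminf could drop below $\mathcal{E}_{\sigma_4}(D_\mu)$. This is precisely the content of the paper's second modification: for $y'\in\{x_2=0\}\setminus\partial^* D_\mu$ one redefines $\widetilde K_{n,\varepsilon}(y')$, uses the dewetting inequality $5c_F-c_S\geq 0$ (for $r=1$ and $q>2$) to obtain \eqref{novelty1}, and proves the geometric estimate \eqref{eqivan2221} by the case analysis on the separated pair $a_0,a_1$ --- this is where the new lattice geometry genuinely enters, rather than in the contact-set averaging. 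Relatedly, your averaging cannot be run directly against the strip energy $E_{\rm strip}$: the strips contain atoms above the bottom row whose missing bonds are also counted in the perimeter reconstruction, which is exactly the double counting you flag but do not resolve. The paper avoids it by working with the auxiliary sets $H_n'$, splitting the plane into the layer $\{0\leq x_2\leq\delta\}$ (handled by the contact and wetting-layer analysis) and its complement (handled by Reshetnyak lower semicontinuity), and sending $\delta\to 0$ at the end; some decomposition of this kind is needed to turn your liminf sketch into a proof.
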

	
	To prove Theorem \ref{thm:convergence_minimizers_Z1} we follow the proof of \cite[Theorem 2.4]{PiVe1}, which consists in proving the lower and the upper bound  established in \cite[Theorems 5.3 and 6.1]{PiVe1}, respectively. The upper bound can be proven in the exact same way as done in \cite[Theorem 6.1]{PiVe1}:  the set $D \subset \mathbb{R}^2 \backslash S$ of finite perimeter satisfying $|D|=1/\rho$ is approximated first by smooth bounded sets, then by polygons, then by polygons with vertices on the lattice, and finally by the sequence of configurations $(D_n)_{n \in \mathbb{N}}$ such that $D_n \in \mathcal{C}_n$, $\mu_{D_n} \stackrel{*}{\rightharpoonup} \rho \chi_D$ and $I_{n,\Lambda,x_F}(\mu_{D_n}) \to I_{\infty, 4}(\rho \chi_D)$. 
	
The lower bound can be proven by an adaptation of the arguments of the proof of \cite[Theorem 5.3]{PiVe1}. We first describe the overall strategy and then highlight the main differences in such arguments in the proof below.

The strategy consists in associating  to each configuration $D_n$ an auxiliary set $H_n'$ such that $\chi_{H_n'}$ converges in $BV_{\rm loc}$ to  $\chi_D$ (see also \cite{Yeung-et-al12}) in order to transfer the problem on the surface energy functionals analyzed in, e.g., \cite{BA}. Furthermore, by fixing  $\delta>0$ we reduce the analysis  to the region  $\{x \in \mathbb{R}^2:0\leq x_2 \leq\delta\}$, since outside of it one can simply use  Reshetnyak Lower-Semiconinuity Theorem. 	
For the points $x \in \partial^*D \cap \{x_2=0\}$ one obtains the lower bound by using the fact that for $n$ large enough the upper half of an arbitrary small square with  center in $x$ and sides parallel to the coordinate axes is ``approximately filled'' by the set $H_n'$ and hence, containing in the worst case all the atoms that have substrate neighbors, whose energy density is $-2c_S/q$. However, also the other atoms without substrate neighbors must be contained, and every  boundary atom in such  upper half of the arbitrary small square on every line perpendicular to $x_2=0$ brings the energy density of $2c_F$. Furthermore, one proves that for each $x \in  \{x_2=0\} \backslash \partial^*D $   there is no negative energy contribution coming from the atoms  with substrate neighbors that are contained in the upper half of an arbitrary small square centered at $x$ by employing, under the dewetting condition \eqref{dewetting_condition_Z1},  the continuum analogue of the strip argument described in Lemma \ref{strip_local} to exploit that $H_n'$ ``vanishes'' in such arbitrary small square around $x$ for $n$ large enough. Finally, $\delta$ is sent to zero obtaining the lower bound.

\begin{proof} 
	
We now detail the adaptations needed in the arguments of the strategy used in  \cite[Theorem 5.3]{PiVe1} (and described above) to prove the lower bound. Such adaptations are needed exclusively for the case with  $r=1$ and $q>2$. Furthermore, we refer only to the modifications in Step 1 of the proof of \cite[Theorem 5.3]{PiVe1} since the modifications of Step 2 are analogous, and Step 3 is exactly the same. 
We need to directly refer to the notations introduced in 	\cite[Section 5]{PiVe1} to describe  the two needed adaptations.

The first modification relates to \cite[Eq. (69)]{PiVe1} that in our setting becomes  
		\begin{eqnarray*}
		2c_F \zeta_1^M(y')-c_S \zeta_2^M(y')&=& \lim_{\varepsilon \to 0} \frac{2c_F \kappa_1^M(Q_{\varepsilon}(y'))-c_S \kappa_2^M(Q_{\varepsilon}(y'))}{\varepsilon} \\ &=& \lim_{\varepsilon \to 0}  \lim_{n \to \infty}  \frac{\kappa_n(Q_{\varepsilon} (y'))}{\varepsilon} \\
		&\geq & {\OOO 2/q (2c_F-c_S)+(\alpha-2/q)2c_F}\varepsilon, 
	\end{eqnarray*}   
	since  in our setting 
	$$k_1(n) \leq 2\left\lceil \frac{\sqrt{n}\eps}{q} \right\rceil$$ because  in a period of $q$ film atoms in  $\partial\mathcal{L}_F$at most two of them can be bonded with the substrate.

	The second modification is required to show that points that do not belong to the set $\partial^* D$ do not bring negative contribution to limiting energy of the type $\mathcal{E}_{\sigma}$. To this end, we need to redefine the set 	$\widetilde K_{n,\varepsilon}(y')$ with
	$$
	\widetilde K_{n,\varepsilon}(y'):= \left\{a_0\in K_{n,\varepsilon}(y')\,:\, \text{$\FFF\exists \EEE i,j \in\{-1,0,1\}$ such that}\, |\widetilde O_{n}^{a_i,j}\cap H_n'\cap Q_{\varepsilon} (y')|>\frac{\varepsilon}{8\sqrt n}
	\right\}
	$$
	for $y'$ being an arbitrary point in $\{x_2=0\}\setminus \partial^* D$, so that instead of  \cite[Eq. (71)]{PiVe1} (again as the consequence of \cite[Eq. (70)]{PiVe1}) we conclude that
	 \begin{equation*} 
	\#\FFF \widetilde K_{n,\varepsilon}(y') \EEE \leq 16\alpha \varepsilon \sqrt{n}+2,
	\end{equation*}  
	from which it follows that 
	\begin{equation} \label{eqivan24} 
	\sum_{a_0 \in \widetilde{K}_{n,\varepsilon}(y')}\kappa_{n}^M (\widetilde O_n^{a_0}) \geq -|2c_F-c_S|(16\alpha \varepsilon \sqrt{n}+2) \frac{1}{\sqrt{n}}
	\end{equation} 
	 (analogously as \cite[Eq. (72)]{PiVe1} was deduced)	. It remains to prove that in view of \eqref{eqivan24}  any $a_0 \in K_{n,\varepsilon}(y')\backslash\widetilde{K}_{n,\varepsilon}(y')$ does not give, for $n$ large enough, negative contribution to the limiting energy. In this regard, we notice that due to the different wetting condition of our setting we obtain 
		\begin{equation} \label{novelty1}  
	\sum_{a_0 \in K_{n,\varepsilon}(y')\backslash\widetilde{K}_{n,\varepsilon}(y')}\kappa_{n}^M (\widetilde O_n^{a_0}) \geq (5c_F-c_S) \frac{\# (K_{n,\varepsilon}(y')\backslash\SSS \widetilde{K}_{n,\varepsilon}(y') \BBB)}{\sqrt{n}}\geq 0
	\end{equation}
	in place of  \cite[Eq. (74)]{PiVe1}. We now analyze only the case when $a_1$ or $a_{-1}$  being the film neighbors  of $a_0$ in $\partial\mathcal{L}_{F}/\sqrt{n}$, 
		 belong to $(D_n \cap \partial \mathcal{L}_{FS})/\sqrt{n}$, since if this is not the case the analysis goes in the completely same way as in the proof of \cite[Theorem 5.3]{PiVe1}. 
		 Without loss of generality we assume that   $a_1 \in (D_n\cap \partial \mathcal{L}_{FS})/\sqrt{n}$ (and so $a_{-1} \not\in (D_n\cap \partial \mathcal{L}_{FS})/\sqrt{n}$).
		 
	Instead of \cite[Eq. (73)]{PiVe1} we just need to show that
		\begin{equation} \label{eqivan2221} 
	\mathcal{H}^1(\partial H_n'\cap Q_{\varepsilon} (y') \cap (\widetilde O_n^{a_0}\cup \widetilde O_n^{a_1})) \geq \FFF \frac{5}{\sqrt{n}}
	\end{equation}
	by considering the following three options: 
	\begin{enumerate}
		\item both  of the strips  $\widetilde O_{n}^{a_0,-1}$ and $\widetilde O_{n}^{a_1,+1}$  have empty intersection with $H_n'$;
		\item  one of the strips  $\widetilde O_{n}^{\FFF a_0,-1\EEE}$ and $\widetilde O_{n}^{a_1,+1}$   has empty intersection with $H_n'$;
		\item  none of the strips  $\widetilde O_{n}^{a_0,-1}$ and $\widetilde O_{n}^{a_1,+1}$   has empty intersection with $H_n'$. 
	\end{enumerate} 
For simplicity we address only the first options (being the other analogous to options (2) and (3) in Step 1 of the proof of \cite[Theorem 5.3]{PiVe1}):  Both $a_0$ and $a_1$ miss both  either four or five film neighbors. In the case they miss five neighbors (and thus all film neighbors besides each other) we easily obtain 
	\eqref{eqivan2221} by analyzing the associated set $H_n'$. 
	In the case when they miss four film neighbors it follows that $a_0+\ttwo \in D_n$, which misses two neighbors and thus \eqref{eqivan2221} can also be easily deduced. 

This conclude the modifications of the arguments. 
	\end{proof}

\section{Proofs of main results}\label{sec:main_results}

In order to prove the main results contained in  Section \ref{sec: main_result} we need to address the case $x_F\neq x_F^0$, which is related neither to models $\mathcal{M}^0_{\Lambda}(z)$ nor to models $\mathcal{M}^1_{\Lambda}(z)$. 

We start by considering models $\mathcal{M}_{\Lambda}(x_F, z)$ defined for vector parameters $\Lambda\in(\Rz^+)^3$ and $z=(p,q,0)\in Z_S^0$ such that ${\rm dist}(\partial\mathcal{L}_F, \partial\mathcal{L}_S)=e _{FS}$.
 The next result allows us to overcome this issue by using the notion of  model equivalence  introduced in Section \ref{comparison}, by proving that such models $\mathcal{M}_{\Lambda}(x_F, z)$ are equivalent  either to $\mathcal{M}^0_{\Lambda'}(z')$ or to $\mathcal{M}^1_{\Lambda''}(z'')$ for proper choices of $\Lambda',\Lambda''\in(\Rz^+)^3$, $z'\in Z_S^0$ and $z''\in Z_S^1$.

\begin{proposition}\label{equivalence_Z1}
Let $\Lambda=(e_{FS},c_F,c_S)\in(\Rz^+)^3$, $x_F\in \Rz^2\setminus\overline{S}$, and $z=(p,q,0)\in Z_S^0$ such that \eqref{firstinterface2} holds. Then the model $\mathcal{M}_{\Lambda}(x_F, z)$ is equivalent to 
\begin{itemize}
\item[-]  $\mathcal{M}^0_{\Lambda'}(1,q)$ with $\Lambda'=(e_{FS},c_F,2c_S)$, if $\mathcal{M}_{\Lambda}(x_F, z)\in C_1$;
\item[-]  $\mathcal{M}^0_{\Lambda}(1,q)$, if $\mathcal{M}_{\Lambda}(x_F, z)\in C_2$;
\item[-]  $\mathcal{M}^0_{\Lambda}(1,q/2)$, if $\mathcal{M}_{\Lambda}(x_F, z)\in C_3$;
\item[-]  $\mathcal{M}^1_{\Lambda}(q,r)$ with
$$
r:=\begin{cases}
s &\textrm{ if } s <\frac{q}{2}, \\
q-s &\textrm{ if } s >\frac{q}{2},
\end{cases}
$$
 if $\mathcal{M}_{\Lambda}(x_F, z)\in C_4$.
\end{itemize}

\end{proposition}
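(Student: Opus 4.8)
The plan is to exploit that, under Definition \ref{equivalent_model}, the two models are compared through the \emph{pure translation} $D_n^b=D_n^a-x_F+x_F^0$, which is an isometry preserving every mutual distance between film atoms. Hence the film--film contribution $\sum_{i\neq j}v_{FF}(|x_i-x_j|)$ is automatically equal for $D_n^a$ and its associated configuration $D_n^b$, and the whole statement reduces to showing that the substrate energy $E_S$ is preserved, i.e.\ that the number of substrate bonds of $x\in\mathcal{L}_F(x_F)$ (weighted by $c_S$, resp.\ $2c_S$ in the target of $C_1$) equals that of $x-x_F+x_F^0$ in the target model. Because \eqref{firstinterface2} forbids any film atom from lying closer than $e_{FS}$ to the wall, $v^1$ never equals $+\infty$ and is just $-c_S$ times this bond count, so the equivalence is a purely combinatorial matching of bonding patterns.

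First I would pin down the geometry of $\partial\mathcal{L}_{FS}(x_F,z)$. Set $d_0:=\sqrt{e_{FS}^2-(x_F^2)^2}$ for the horizontal offset of $x_F$ to its bonded substrate atom; minimality of the distance $e_{FS}$ forces $2d_0\le e_S=q/p$ (otherwise the substrate atom on the opposite side would be strictly closer), which already shows no site can carry more than two substrate neighbours. A film--boundary site $x_F+k\tone$ bonds to the substrate exactly through atoms sitting at horizontal positions $x_F^1+k\pm d_0$. Using $x_F\in\partial\mathcal{L}_{FS}$ and $\gcd(p,q)=1$, the ``near-side'' bonds occur precisely at $k\in q\Zz$, while the ``far-side'' bonds are present iff $2d_0\in\tfrac1p\Zz$, in which case --- and this is where coprimality is essential --- solving $qj\equiv 2d_0p\ (\mathrm{mod}\ p)$ for the unique residue $j_0$ places them in a \emph{single} class $k\in s+q\Zz$. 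This yields the classification: no far family gives $C_2$ ($L=q\Zz$, one bond); a far family with $s=q/2$ gives $C_3$ ($L=\tfrac q2\Zz$, one bond); with $s\neq q/2,0$ gives $C_4$ ($L=q\Zz\cup(q\Zz+s)$, one bond, forcing $q>2$); and the degenerate case $s=0$, equivalent to $2d_0=e_S$, merges the two families so that every $q\Zz$ site carries two bonds, i.e.\ $C_1$. Exhaustiveness and mutual disjointness of $C_1,\dots,C_4$ follow at once, giving the uniqueness of $i_0$ used after the classification.

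With this structure the matching is direct. In each target the centre $x_F^0=(0,e_{FS})$ lies straight above a substrate atom, so every $\partial\mathcal{L}_{FS}$ site carries one vertical bond, and the bonded residue set is $q\Zz$ for $\mathcal{M}^0_\Lambda(1,q)$, $\tfrac q2\Zz$ for $\mathcal{M}^0_\Lambda(1,q/2)$, and $q\Zz\cup(q\Zz+r)$ for $\mathcal{M}^1_\Lambda(q,r)$ --- matching $C_2$, $C_3$, $C_4$ after the translation. For $C_1$ the two oblique bonds of strength $c_S$ are reproduced by the single vertical bond of strength $2c_S$ in $\mathcal{M}^0_{\Lambda'}(1,q)$ with $\Lambda'=(e_{FS},c_F,2c_S)$, so both per-site substrate energies equal $-2c_S$. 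The only delicate point is the constraint $0<r\le q/2$ in $Z_S^1$: when $s>q/2$ I would re-choose the base point, taking $x_F$ at the far-side site (legitimate by \eqref{wlog_x_F}, since $x_F+s\tone$ lies in the same lattice $\mathcal{L}_F$); relative to this new centre the bonded set reads $q\Zz\cup(q\Zz+(q-s))$, which is exactly $r=q-s$ as claimed in the statement.

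The main obstacle is the number-theoretic step of the second paragraph: verifying that, thanks to $\gcd(p,q)=1$, the far-side bonds (when present) collapse into exactly one residue class modulo $q$, and that the bound $2d_0\le e_S$ caps the bond count at two with equality characterising $C_1$. Once this combinatorial picture is secured, the equality $V_n^a(D_n^a)=V_n^b(D_n^b)$ is pure bookkeeping: only bond counts and the weight $c_S$ versus $2c_S$ enter $E_S$, while the film--film energy is translation-invariant by construction, so the claimed equivalences follow immediately.
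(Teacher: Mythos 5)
Your strategy is sound and its mechanics are genuinely different from the paper's proof. The paper proceeds by a geometric case analysis (a film site with two substrate neighbours; a substrate atom with two film neighbours; neither of the two), and derives the structure of $\partial\mathcal{L}_{FS}(x_F,z)$ through congruent-triangle arguments (translations versus reflections of the triangle formed by a bonded film site, its substrate neighbour and its projection onto the wall) combined with the $q\tone$-periodicity of the two lattices. You instead parametrize all possible bonds by the single horizontal offset $d_0=\sqrt{e_{FS}^2-(x_F^2)^2}$ and determine the bonded residue classes modulo $q$ arithmetically, using $\gcd(p,q)=1$ to show that the near-side bonds sit exactly on $q\Zz$ and that the far-side bonds, when present, occupy a single class $q\Zz+s$. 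Both routes then conclude in the same way: the film--film energy is invariant under the translation $D_n\mapsto D_n-x_F+x_F^0$, so equivalence reduces to matching substrate-bond counts (with the doubled weight $2c_S$ absorbing the two bonds of $C_1$), and the constraint $0<r\le q/2$ in $Z_S^1$ is restored by re-basing $x_F$ at the far-side site when $s>q/2$, exactly as in the paper. Your arithmetic derivation makes the role of coprimality more explicit; the paper's triangle argument avoids coordinates.

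There is, however, one concrete error in your classification rule: you declare the degenerate case $s=0$ to be ``equivalent to $2d_0=e_S$'' and assign it to $C_1$. In fact $s=0$ occurs precisely when $2d_0\in e_S\Zz$, i.e.\ when $2d_0\in\{0,e_S\}$, and the two possibilities are very different. If $d_0=e_S/2>0$, the two bond families point to distinct substrate atoms and every site of $q\Zz$ carries two bonds, which is indeed $C_1$; but if $d_0=0$, the ``near'' and ``far'' bonds are the same vertical bond, every bonded site carries exactly one bond, and the model lies in $C_2$ --- this is exactly the situation $x_F$ directly above a wall atom, which includes the reference models $\mathcal{M}^0_{\Lambda}(p,q)$ of \cite{PiVe1} with $p>1$. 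Applied literally, your rule would declare such models equivalent to $\mathcal{M}^0_{\Lambda'}(1,q)$ with $\Lambda'=(e_{FS},c_F,2c_S)$, which is false: each bonded site would then contribute $-2c_S$ instead of $-c_S$ to the energy. The repair is a one-line case split within $s=0$ (namely $d_0>0$ gives $C_1$ with target $\mathcal{M}^0_{\Lambda'}(1,q)$, while $d_0=0$ gives $C_2$ with target $\mathcal{M}^0_{\Lambda}(1,q)$), after which your argument goes through and yields all four equivalences as stated.
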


\begin{proof}
In the following we denote by \emph{film site} any site of the film lattice. We proceed by treating separately the following different situations, which depend on the relative positioning of the lattices $\mathcal{L}_F(x_F)$ and $\mathcal{L}_S(z)$:
\begin{itemize}
\item[1)] There exists a film site $a \in \partial \mathcal{L}_{FS}(x_F,z)$ that is bonded with (at least) two substrate atoms in $\partial \mathcal{L}_S(z)$. We notice that every film site can have at most two substrate neighbors  (which then have to also be mutually neighbors) as directly follows  from \eqref{firstinterface2}. 
\item[2)] Every film site in $\partial \mathcal{L}_{FS}(x_F,z)$ is bonded with exactly zero or exactly one substrate atoms. For this situation we further distinguish the following cases:\begin{itemize}
\item[2a)] there exists at least a substrate atom that is bonded with (at least) two film sites;
\item[2b)] every substrate atom is bonded with exactly zero or exactly one film site.
\end{itemize}
\end{itemize}  

In each of the above situation we will identify the corresponding category $C_i$ for $i=1,\dots,4$ and the equivalence class for the models in such situations.

We begin by analyzing  1)  First we observe  that   every film site that has a substrate  neighbor needs to actually have  two substrate neighbors (which then  necessarily need to be mutual  neighbors). In fact, as a consequence of the fact that both $\partial \mathcal{L}_S$ and $\partial \mathcal{L}_{F}$ are $q$ periodic there exist $a_1,a_2 \in \partial\mathcal{L}_{FS}(x_F,z)$ such that $a_1$ is bonded with $s_1 \in \partial \mathcal{L}_S(z)$ and $s_1'=s_1+(e_S,0) \in \partial \mathcal{L}_S(z)$, and $a_2$ is bonded with $s_2 \in \partial \mathcal{L}_S(z)$. We denote by $s_{a_1}$ and $s_{a_2}$ the projections  on $\partial \mathcal{L}_S(z)$ of $a_1$ and $a_2$, respectively,  and let $s_2':=s_2+(e_S,0)\in \partial \mathcal{L}_S(z)$ and $s_2'':=s_2-(e_S,0) \in \partial \mathcal{L}_S(z)$. We have that ${\rm dist}(a_{\alpha},s_{\alpha})=e_{FS}$, ${\rm dist} (a_{\alpha},s_{a_{\alpha}})=\sqrt{(e_{FS})^2-(e_{S}/2)^2}$, for $\alpha=1,2$, and ${\rm dist}(a_{1},s'_{1})=e_{FS}$. Thus, we deduce that the triangle whose vertices are $s_2,s_{a_2},a_2$ is either the translation of the triangle whose vertices are $s_1, s_{a_1}, a_1$, or the translation of the triangle $s_1',s_{a_1},a_1$. In the former case we deduce that ${\rm dist}(a_2,s_2')=e_{FS}$, while in the latter case we have   ${\rm dist}(a_2,s_2'')=e_{FS}$, which gives the claim that $a_2$ has also two substrate neighbors.

 Let us now denote by $a_1, a_2 \in  \partial \mathcal{L}_{FS}(x_F,z)$, say $a_2$ on the right of $a_1$,  two closest film sites in $\partial \mathcal{L}_{FS}(x_F,z)$. Since ${\rm dist}(a_1,a_2)$ is an integer multiple of $e_S$, it has also  to be a multiple of $q$. On the other hand since the reference lattices are $q\tone$ periodic we have that ${\rm dist}(a_1,a_2)\leq q$ and hence, we conclude ${\rm dist}(a_1,a_2)=q$ and
$$\partial \mathcal{L}_{FS}(x_F,z)=\{ x_F+k \, : \, k \in q\mathbb{Z} \}. $$
 From this we deduce that in any model $\mathcal{M}_{\Lambda}(x_F, z)$ satisfying 1. belongs to class $C_1$ and is equivalent with the model $\mathcal{M}^0_{\Lambda'}(1,q)$ with $\Lambda':=(e_{FS},c_F,2c_S)$.

We now pass to the situation 2)  and in particular to  2a) and easily deduce that in this case all substrate atoms are connected with exactly zero or exactly two film (neighboring) atoms, and if $s_1,s_2 \in \partial \mathcal{L}_S(z)$ are two closest substrate atoms that are bonded with film sites, then ${\rm dist} (s_1,s_2)=q$ and the set of all substrate atoms that are bonded with film sites is given by
 $$\{s_1+k\,:\, k\in q\mathbb{Z}\}.$$
For 2a) it remains to determine the corresponding category $C_i$ for $i=1,\dots,4$ find the equivalence class:
\begin{itemize}
\item[-] if $q=1$ then $\partial \mathcal{L}_{FS}(x_F,z)=\partial \mathcal{L}_{F}(x_F)$ and every film site in $\partial \mathcal{L}_{FS}(x_F,z)$ is bonded with exactly two substrate atoms.
Moreover, since \eqref{firstinterface2} holds,  every substrate atom in $\partial \mathcal{L}_S(z)$ is bonded with two film sites.
This implies  that the model $\mathcal{M}_{\Lambda}(x_F, z)$  belongs to the class $C_1$ and is equivalent to  $\mathcal{M}^0_{\Lambda'}(1,q)$, where $\Lambda'=(e_{FS},c_F, 2c_S)$.    
\item[-] if $q=2$ then
again $\partial \mathcal{L}_{F}(x_F)=\partial \mathcal{L}_{FS}(x_F,z)$  
and every film site in $\partial \mathcal{L}_{FS}(x_F,z)$ is bonded with exactly one substrate atom.  
Moreover, since \eqref{firstinterface2} holds and every substrate atom in $\partial \mathcal{L}_S(z)$ is bonded with one film site, this implies that the model  $\mathcal{M}_{\Lambda}(x_F, z)$ belongs to the clas $C_3$   and is equivalent to  $\mathcal{M}^0_{\Lambda}(1,1)$.
\item[-] if $ q\geq 3$ then
$$\partial \mathcal{L}_{FS}(x_F,z)=\{x_F+k\,: \, k \in q \mathbb{Z} \cup (q \mathbb{Z}+1)  \}, $$ or
\begin{eqnarray*}
   \partial \mathcal{L}_{FS}(x_F,z)&=&\{x_F+k\, : \, k \in q \mathbb{Z} \cup (q \mathbb{Z}-1)  \}\\ &=&\{(x_F-1)+k\,:\,k \in q \mathbb{Z} \cup (q \mathbb{Z}+1) \}.
\end{eqnarray*}
Thus,  both situations imply that the model $\mathcal{M}_{\Lambda}(x_F, z)$ belongs to the class $C_4$ with $r=1$ and the model $\mathcal{M}_{\Lambda}(x_F, z)$ is equivalent to  $\mathcal{M}^1_{\Lambda}(q,1)$.
\end{itemize}

It remains to analyze the situation 2b). Fix an arbitrary film site  $a_1 \in \partial \mathcal{L}_{FS}(x_F,z)$ bonded with a substrate atom $s_1 \in \partial \mathcal{L}_S(z)$. Let us denote  the closest film site on the right of $a_1$  that is bonded with a substrate atom by $a_2 \in \partial \mathcal{L}_{FS}(x_F,z)$ and the closest film site on the right of $a_2$  that is bonded with a substrate atom by $a_3 \in \partial \mathcal{L}_{FS}(x_F,z)$. Furthermore, we denote by $s_2, s_3 \in \partial \mathcal{L}_S(z)$ the substrate neighbors of $a_2,a_3 \in \partial \mathcal{L}_{FS}(x_F,z)$, respectively. We also denote by $s_{a_i}$  the projections of $a_i$ on $\partial \mathcal{L}_{S}(z)$ for $i=1,2,3$.
There are two possibilities:
\begin{itemize}
\item[-] the triangle $a_2,s_2,s_{a_2}$ is the translation of the triangle $a_1,s_1,s_{a_1}$. Then ${\rm dist}(s_1,s_2)$ is an integer  (since $a_1,a_2,s_1,s_2$ is a parallelogram), which has to be equal to $q$ (it is  greater or equal to $q$, since it is a multiple of $e_S$ and less or equal to $q$, by the fact that the lattices are $q\tone$ periodic). Therefore, 
$$\partial \mathcal{L}_{FS}(x_F,z)=\{x_F+k: k \in q \mathbb{Z} \},$$
which  implies that the model $\mathcal{M}_{\Lambda}(x_F, z)$ belongs to class $C_1$ and is equivalent with the model $\mathcal{M}^0_{\Lambda}(1,q)$.
\item[-] the triangle $a_2,s_2,s_{a_2}$ is a translated reflection of the triangle $a_1,s_1,s_{a_1}$. Then the triangle $a_3,s_3,s_{a_3}$ is a translation of the triangle $a_1,s_1,s_{a_1}$ (because otherwise it would be a translation of the triangle $a_2,s_2,s_{a_2}$ and ${\rm dist}(s_2,s_3)$ would be equal to $q$ by the previous analysis). Therefore  ${\rm dist}(s_1,s_3)$ is equal to $q$ and to ${\rm dist}(a_1,a_3)$ (it is a multiple of $q$ since $a_1,a_3,s_1,s_3$ is a parallelogram and less or equal to $q$ as the consequence the fact that the lattices are $q\tone$ periodic). If we define  $r={\rm dist}(a_1,a_2)$ we easily conclude that
 \begin{equation} \label{invert1} \partial \mathcal{L}_{FS}(x_F,z)= \{x_F+k: k \in q\mathbb{Z}\cup (q\mathbb{Z}+r) \},  
 \end{equation}
 or
 \begin{eqnarray} \label{invert2}
 \partial \mathcal{L}_{FS}(x_F,z)&=& \{x_F+k: k \in q\mathbb{Z}\cup (q\mathbb{Z}-r) \}\\ \nonumber &=&\{(x_F-r)+k: k \in q\mathbb{Z}\cup (q\mathbb{Z}+r) \}.
 \end{eqnarray}
 Therefore,  we obtain the following implications:
 \begin{itemize}
 \item if $r=\frac{q}{2}$, then  $\mathcal{M}_{\Lambda}(x_F, z)$ belongs to the class $C_3$ and the model is equivalent with the model $\mathcal{M}^0_{\Lambda}(1,\frac{q}{2})$, 
 \item  if $r< \frac{q}{2}$, then $\mathcal{M}_{\Lambda}(x_F, z)$ belongs to the class $C_4$  and the model $\mathcal{M}_{\Lambda}(x_F, z)$ is equivalent to  $\mathcal{M}^1_{\Lambda}(q,r)$,
  
 \item if $r>\frac{q}{2}$, then by replacing $x_F$ with $x_F'=x_F+r$ and $r$ with $r'=q-r$  in \eqref{invert1} (notice that  $\mathcal{L}_{FS}(x_F',z')=\mathcal{L}_{FS}(x_F,z)$ for $z':=(1,q,r')$)   or  by replacing $r$ by $r':=q-r$ in \eqref{invert2} (notice that $q\mathbb{Z}-r=(q-r)+q\mathbb{Z}$), we conclude that $\mathcal{M}_{\Lambda}(x_F, z)$ belongs to the class $C_4$ and  is equivalent to  $\mathcal{M}^1_{\Lambda}(q,q-r)$.  
\end {itemize}
This concludes the analysis of 2b) and the proof of the assertion.
\end{itemize}  
\end{proof}

In view of Proposition \ref{equivalence_Z1} we can now recover the main results of the manuscripts directly by  \cite[Theorems 2.2-2.4]{PiVe1}  when models $\mathcal{M}_{\Lambda}(x_F, z)\in C_i$ with $i=1,2,3$ and so it is equivalent  with a model of type $\mathcal{M}^0_{\Lambda'}(z')$ for a proper choice of $\Lambda'\in(\Rz^+)^3$ and $z'\in Z_S^0$, and by Theorems \ref{wetting_theorem_Z1}, \ref{connectness_Z1}, and \ref{thm:convergence_minimizers_Z1} for models $\mathcal{M}_{\Lambda}(x_F, z)\in C_4$ that are equivalent to the models treated in Section \ref{sec:M1}, i.e.,  $\mathcal{M}^1_{\Lambda}(z'')$ for a proper choice of  $z''\in Z_S^1$.

We conclude the paper with the list of the proofs of the main results.

\begin{proof}[Proof of Theorem \ref{wetting_theorem}]
The first assertion directly follows from recalling that from \cite[Theorem 2.2]{PiVe1} the wetting condition for $z\in Z^0_S$  is
	\begin{equation}
\begin{cases}
c_{S}\geq 6c_{F} & \text{if either $z \in Z_S^0$ and $q\neq 1$},\\
c_{S}\geq 4c_{F} & \text{if $z \in Z_S^0 $ and $q=1$}, \\
\end{cases}
	\end{equation}
	and from Theorem \ref{wetting_theorem_Z1} the wetting condition for $z\in Z^1_S$  is \eqref{wetting_condition_total}. Furthermore, by Definition \ref{equivalent_model}  the associated configurations to the minimizers of equivalent models are minimizers. 
The second assertion is also a direct consequence of Proposition \ref{equivalence_Z1}  and the second assertions  of \cite[Theorem 2.2]{PiVe1} for $z\in Z^0_S$   and  of Theorem \ref{wetting_theorem_Z1}  for $z\in Z^1_S$.
\end{proof} 

\begin{proof}[Proof of Theorem \ref{connectness}]
We begin by observing that  under the dewetting condition \eqref{dewetting_condition} we obtain the dewetting condition \cite[Eq. (32)]{PiVe1}  related  to models  $\mathcal{M}^0_{\Lambda'}(z')$ with $\Lambda'\in(\Rz^+)^3$ and $z'\in Z_S^0$, i.e.,  the condition 
	\begin{equation}\label{dewetting_condition_Z0} 
\begin{cases}
c_{S}< 6c_{F} & \text{if either $z \in Z_S^0$ and $q\neq 1$},\\
c_{S}< 4c_{F} & \text{if $z \in Z_S^0 $ and $q=1$}, \\
\end{cases}
	\end{equation}, if the model $\mathcal{M}_{\Lambda}(x_F, z)\in C_i$ with $i=1,2,3$, and the dewetting condition \eqref{dewetting_condition_Z1} if  $\mathcal{M}_{\Lambda}(x_F, z)\in C_4$. Therefore, by Proposition \ref{equivalence_Z1} for $i=1,2,3$ we can conclude by employing \cite[Theorem 2.3]{PiVe1} and for $i=4$ by Theorem \ref{connectness_Z1}.
	

\end{proof} 

\begin{proof}[Proof of Theorem \ref{thm:convergence_minimizers}]
As observed in the proof of Theorem \ref{connectness} under the dewetting condition \eqref{dewetting_condition} we obtain the dewetting condition \cite[Eq. (32)]{PiVe1}  related  to models  $\mathcal{M}^0_{\Lambda'}(z')$ with $\Lambda'\in(\Rz^+)^3$ and $z'\in Z_S^0$, i.e.,  the condition 
	\begin{equation}\label{dewetting_condition_Z0} 
\begin{cases}
c_{S}< 6c_{F} & \text{if either $z \in Z_S^0$ and $q\neq 1$},\\
c_{S}< 4c_{F} & \text{if $z \in Z_S^0 $ and $q=1$}, \\
\end{cases}
	\end{equation}, if the model $\mathcal{M}_{\Lambda}(x_F, z)\in C_i$ with $i=1,2,3$, and the dewetting condition \eqref{dewetting_condition_Z1} if  $\mathcal{M}_{\Lambda}(x_F, z)\in C_4$. Therefore, the assertion follows by Proposition \ref{equivalence_Z1} for $i=1,2,3$ from \cite[Theorem 2.4]{PiVe1} and for $i=4$ from Theorem \ref{thm:convergence_minimizers_Z1}.

\end{proof}

 \section*{Acknowledgments}
 The authors are thankful to the Erwin Schr\"odinger Institute in Vienna, where part of this work was developed during the ESI Joint Mathematics-Physics Symposium ``Modeling of crystalline Interfaces and Thin Film Structures'', and acknowledge the support received from BMBWF through the OeAD-WTZ project HR 08/2020.
P. Piovano  acknowledges support \CCC from the Okinawa Institute of Science and Technology in Japan, from the Wolfgang Pauli Institute (WPI) Vienna, from  the Austrian Science Fund (FWF) through projects P 29681 and TAI 293-N, \BBB and from the Vienna Science and Technology Fund (WWTF) together with the City of Vienna and Berndorf Privatstiftung through the project MA16-005. I. Vel\v{c}i\'c  acknowledges support from the Croatian Science Foundation under grant no. IP-2018-01-8904 (Homdirestproptcm).\EEE

\end{document}